\documentclass[11pt]{article}

\usepackage{epsfig,epsf,fancybox}
\usepackage{amsmath}
\usepackage{mathrsfs}
\usepackage{amssymb}
\usepackage{graphicx}
\usepackage{color}
\usepackage{multirow}
\usepackage{paralist}
\usepackage{verbatim}
\usepackage{galois}
\usepackage{algorithm}
\usepackage{algorithmic}
\usepackage{boxedminipage}
\usepackage{booktabs}
\usepackage{accents}
\usepackage{stmaryrd}
\usepackage{subcaption}
\usepackage{wrapfig}
\usepackage[bottom]{footmisc}
\usepackage{natbib}

\usepackage[colorlinks,linkcolor=magenta,citecolor=blue, pagebackref=true,backref=true]{hyperref}
\renewcommand*{\backrefalt}[4]{%
    \ifcase #1 \footnotesize{(Not cited.)}%
    \or        \footnotesize{(Cited on page~#2.)}%
    \else      \footnotesize{(Cited on pages~#2.)}%
    \fi}

\long\def\comment#1{}
\usepackage{nicefrac}

\textheight 8.5truein
\topmargin 0.25in
\headheight 0in
\headsep 0in
\textwidth 6.8truein
\oddsidemargin  0in
\evensidemargin 0in

\newtheorem{theorem}{Theorem}[section]

\newtheorem{lemma}[theorem]{Lemma}
\newtheorem{proposition}[theorem]{Proposition}

\newtheorem{definition}{Definition}[section]
\newtheorem{example}{Example}[section]

\newtheorem{remark}{Remark}[section]

\newtheorem{inftheorem}{Informal Theorem}[section]

\numberwithin{equation}{section}

\newcommand{\x}{\mathbf x}

\newcommand{\s}{\mathbf s}

\newcommand{\argmin}{\mathop{\rm argmin}}

\newcommand{\LCal}{\mathcal{L}}

\newcommand{\CCal}{\mathcal{C}}

\newcommand{\KCal}{\mathcal{K}}

\newcommand{\SCal}{\mathcal{S}}
\newcommand{\TCal}{\mathcal{T}}

\newcommand{\ICal}{\mathcal{I}}

\newcommand{\br}{\mathbb{R}}

\newcommand{\ba}{\begin{array}}
\newcommand{\ea}{\end{array}}

\newcommand{\RCal}{\mathcal{R}}
\newcommand{\PCal}{\mathcal{P}}

\newcommand{\NCal}{\mathcal{N}}

\newcommand{\mydefn}{:=}

\begin{document}

\begin{center}

{\bf{\LARGE{First-Order Algorithms for Nonlinear Generalized Nash \\ [.2cm] Equilibrium Problems}}}

\vspace*{.2in}
{\large{
\begin{tabular}{c}
Michael I. Jordan$^{\diamond, \dagger}$ \quad Tianyi Lin$^\diamond$ \quad Manolis Zampetakis$^\diamond$
\end{tabular}
}}

\vspace*{.2in}

\begin{tabular}{c}
Department of Electrical Engineering and Computer Sciences$^\diamond$ \\
Department of Statistics$^\dagger$ \\ 
University of California, Berkeley
\end{tabular}

\vspace*{.2in}

\today

\vspace*{.2in}

\begin{abstract}
We consider the problem of computing an equilibrium in a class of \textit{nonlinear generalized Nash equilibrium problems (NGNEPs)} in which the strategy sets for each player are defined by equality and inequality constraints that may depend on the choices of rival players. While the asymptotic global convergence and local convergence rates of algorithms to solve this problem have been extensively investigated, the analysis of nonasymptotic iteration complexity is still in its infancy. This paper presents two first-order algorithms---based on the quadratic penalty method (QPM) and augmented Lagrangian method (ALM), respectively---with an accelerated mirror-prox algorithm as the solver in each inner loop. We establish a global convergence guarantee for solving monotone and strongly monotone NGNEPs and provide nonasymptotic complexity bounds expressed in terms of the number of gradient evaluations. Experimental results demonstrate the efficiency of our algorithms in practice. 
\end{abstract}
\end{center}

\section{Introduction}
The Nash equilibrium problem (NEP)~\citep{Nash-1950-Equilibrium, Nash-1951-Noncooperative} is a central topic in mathematics, economics and computer science. NEP problems have begun to play an important role in machine learning as researchers begin to focus on decisions, incentives and the dynamics of multi-agent learning. In a classical NEP, the payoff to each player depends upon the strategies chosen by all, but the domains from which the strategies are to be chosen for each player are independent of the strategies chosen by other players. The goal is to arrive at a joint optimal outcome where no player can do better by deviating unilaterally~\citep{Osborne-1994-Course, Myerson-2013-Game}. 

The \emph{generalized Nash equilibrium problem} (GNEP) is a natural generalization of an NEP where the choice of an action by one agent affects both the payoff and the \textit{domain} of actions of all other agents~\citep{Arrow-1954-Existence}. Its introduction in the 1950's provided the foundation for a rigorous theory of economic equilibrium~\citep{Debreu-1952-Social, Arrow-1954-Existence, Debreu-1959-Theory}. More recently, the GNEP problem has emerged as a powerful paradigm in a range of engineering applications involving noncooperative games. In particular, in the survey of~\citet{Facchinei-2010-Generalized}, three general classes of problems were developed in detail: the abstract model of general equilibrium, power allocation in a telecommunication system, and environmental pollution control. Further applications of the GNEP problem in recent years have included adversarial classification~\citep{Bruckner-2009-Nash, Bruckner-2012-Static}, wireless communication and networks~\citep{Pang-2008-Distributed, Pang-2010-Design, Han-2012-Game, Scutari-2014-Real}, power grids~\citep{Jing-1999-Spatial, Hobbs-2007-Nash}, cloud computing~\citep{Ardagna-2011-Game, Ardagna-2015-Generalized}, modern traffic systems with e-hailing services~\citep{Ban-2019-General}, supply and demand constraints for transportation systems~\citep{Stein-2018-Noncooperative} and pollution quotas for environmental applications~\citep{Krawczyk-2005-Coupled, Breton-2006-Game}.  For an overview of GNEP theory and applications, we refer to more detailed surveys~\citep{Cominetti-2012-Modern,Facchinei-2007-Finite, Facchinei-2010-Generalized, Fischer-2014-Generalized} and the references therein.

It is of significant interest to bring the GNEP framework and its decision-making applications into contact with machine learning. By analogy with the bridge between machine learning and smooth nonlinear optimization that has been so productive in the recent years, the major challenges that arise for GNEP formulations of machine-learning problems are computational, including the development of scalable gradient-based algorithms. The first attempts to design such algorithms are due to~\citet{Robinson-1993-Shadow-I, Robinson-1993-Shadow-II}. Further progress has been made over the ensuing three decades by considering generic algorithms based on either the Nikaido-Isoda function~\citep{Uryas-1994-Relaxation, Krawczyk-2000-Relaxation, Von-2009-Relaxation, Facchinei-2009-Generalized, Dreves-2011-Solution, Von-2012-Newton, Dreves-2013-Globalized, Izmailov-2014-Error, Fischer-2016-Globally} or penalty functions~\citep{Pang-2005-Quasi, Facchinei-2011-Partial, Fukushima-2011-Restricted, Facchinei-2010-Penalty, Kanzow-2016-Multiplier, Kanzow-2016-Augmented, Kanzow-2018-Augmented, Ba-2022-Exact}. In terms of theoretical guarantees, global convergence and local convergence rates have been established for some of these algorithms under suitable assumptions. For an overview of recent progress on penalty-type algorithms, we refer to~\citet{Ba-2022-Exact}. 

Despite this progress on the algorithmic aspects of GNEPs, significant computational challenges remain and they hinder the the practical impact of the GNEP concept in machine learning. Most notably, global convergence rate characterizations are not yet available for any of the algorithms that target the solution of nonlinear GNEPs. Thus, we summarize the focus of our work: \textit{Can we design efficient algorithms for solving GNEPs that have global convergence rate guarantees?} What rates can we obtain and in what class of GNEPs?

\subsection{Contribution}
We tackle the problem of designing efficient first-order algorithms for a class of nonlinear GNEPs and proving optimal global convergence rates in various settings. More specifically, we start by defining a class of \textit{monotone and strongly monotone} GNEPs which cover a range of machine-learning applications. Leveraging the recent progress on the iteration complexity analysis of first-order algorithms for nonlinear optimization and variational inequality (VI) problems~\citep{Lan-2013-Iteration, Lan-2016-Iteration, Chen-2017-Accelerated, Xu-2021-Iteration}, we develop first-order algorithms for computing the solutions of GNEPs with global convergence rate estimates. The algorithms that we study incorporate the accelerated mirror-prox (AMP) scheme into a quadratic penalty method (QPM) or an augmented Lagrangian method. At a high level, the following informal theorems summarize the main results of our paper:
\begin{inftheorem}[Theorem \ref{Thm:AMPQP}]
The required number of gradient evaluations for accelerated mirror-prox quadratic penalty (AMPQP) method to reach an $\epsilon$-solution is bounded by $\tilde{O}(\epsilon^{-1})$ and $\tilde{O}(\epsilon^{-1/2})$ in monotone and strongly monotone NGNEPs.
\end{inftheorem}
\begin{inftheorem}[Theorem \ref{Thm:AMPAL}]
The required number of gradient evaluations for the accelerated mirror-prox augmented Lagrangian (AMPAL) method to reach an $\epsilon$-solution is bounded by $\tilde{O}(\epsilon^{-1})$ and $\tilde{O}(\epsilon^{-1/2})$ in monotone and strongly monotone NGNEPs.
\end{inftheorem}
Although the algorithmic frameworks based on QPM and ALM are classical for GNEPs, we remark that combining these frameworks with AMP is new in the literature. For inexact ALM, the convergence guarantees are asymptotic for GNEPs while the rate results are only derived for an optimization setting when combined with Nesterov's accelerated gradient (NAG) method. These analyses cannot be directly extended to NGNEPs since NAG does not apply to the VI subproblem. 

Building on background in linearly constrained optimization~\citep{Lan-2013-Iteration, Lan-2016-Iteration}, our results are new in the GNEP setting. In particular, we highlight our analysis of the AMP algorithm for strongly monotone VIs and the demonstration of its applicability for NGNEPs, which is established via the derivation of an optimal convergence rate in monotone settings. We also prove the equivalence between KKT points and solutions for NGNEPs.  Noting that~\citet{Bueno-2019-Optimality} have presented a counterexample for general GNEPs, our result is obtained by identifying and exploiting a special structure of NGNEPs. This equivalence forms the basis of our development of first-order algorithms for solving NGNEPs.

We make some further comments. Note that there are many instances of GNEPs in specialized problems where global convergence rate guarantees have been established~\citep{Nesterov-2011-Solving, Nabetani-2011-Parametrized, Bianchi-2022-Fast, Franci-2022-Stochastic}. These GNEPs are either monotone or satisfy an error bound condition that is analogous to strong monotonicity. This suggests that the notion of monotonicity or strongly monotonicity is key to global convergence rate estimates for the algorithms. Further, the local convergence rates were derived under local strong monotonicity or more general local error bounds~\citep{Facchinei-2009-Generalized, Facchinei-2015-Semismooth}. There are also several natural applications for monotone GNEPs, motivating the investigation of \textit{simple}, \textit{optimal} and \textit{implementable} first-order algorithms for monotone GNEPs. In particular, the real-world GNEPs application problems arising from machine learning, e.g., team games~\citep{Celli-2018-Computational, Celli-2019-Coordination, Farina-2021-Connecting, Carminati-2022-Marriage, Kalogiannis-2022-Efficiently}, are extremely large and make significant demands with respect to computational feasibility. This necessitates the investigation of first-order algorithms for GNEPs. In summary, although the notion of global monotonicity or strong monotonicity rules out some interesting application problems, it encompasses a rather rich class of GNEPs and leads to global convergence rate guarantees. 

\subsection{Related work} 
To appreciate the broad scope of our agenda, we start by reviewing the algorithms for computing the solution of GNEPs.~\citet{Rosen-1965-Existence} studied the jointly convex GNEP and proposed an algorithm based on gradient projection. Subsequently, other algorithms were developed, including the relaxation method~\citep{Uryas-1994-Relaxation, Krawczyk-2000-Relaxation, Von-2009-Relaxation}, Newton-type methods~\citep{Facchinei-2009-Generalized, Von-2012-Newton, Dreves-2013-Globalized, Izmailov-2014-Error, Fischer-2016-Globally} and the interior-point potential reduction method~\citep{Dreves-2011-Solution}. Most of these approaches are based on the Nikaido-Isoda function and thus are very expensive computationally. By exploiting the special structure of certain GNEPs, some specific algorithms were developed with improved theoretical guarantees; these include the VI approaches of~\citet{Facchinei-2007-Generalized} and~\citet{Nabetani-2011-Parametrized}, which are restricted to jointly convex GNEPs, and Lemke's method from~\citet{Schiro-2013-Solution}, which is specifically designed to solve a class of linear GNEPs.

More recently, progress has been made on developing penalty-type algorithms for GNEPs.  Representatives of this class of algorithms are exact and inexact penalty methods~\citep{Facchinei-2011-Partial, Fukushima-2011-Restricted, Facchinei-2010-Penalty, Kanzow-2018-Augmented, Ba-2022-Exact} and exact and inexact augmented Lagrangian method~\citep{Pang-2005-Quasi, Kanzow-2016-Multiplier, Kanzow-2016-Augmented, Kanzow-2018-Augmented}. The exact penalty method was introduced in~\citet{Fukushima-2011-Restricted}, and its variants have been proposed where either all of the constraints are penalized~\citep{Facchinei-2010-Penalty} or some of the constraints are penalized~\citep{Facchinei-2011-Partial}. These algorithms achieve exactness results under suitable assumptions but suffer from the nonsmoothness of penalized subproblems, thereby leading to numerical difficulties from a practical point of view.~\citet{Pang-2005-Quasi} proposed the augmented Lagrangian method to solve quasi-variational inequalities (QVIs) and~\citet{Kanzow-2016-Multiplier} improved this scheme with a global convergence guarantee. They discussed the GNEP within their framework by treating it as a special QVI. Later on,~\citet{Kanzow-2016-Augmented, Kanzow-2018-Augmented} applied a similar idea directly to GNEPs and proved theoretical results which are stronger than those that arise from the QVI framework. Even though an asymptotic global convergence has been established under suitable assumptions, the nonasymptotic global convergence rate (aka, the iteration complexity bound) of these algorithms is open. For a brief overview of algorithmic results, we refer to the survey of~\citet{Fischer-2014-Generalized}. 

Another line of relevant work is concerned with optimality conditions and constraint qualifications (CQs). In nonlinear optimization, the Karush-Kuhn-Tucker (KKT) condition provides a general characterization of local optimality under various CQs~\citep{Kuhn-1951-Nonlinear, Karush-1939-Minima}. Both the KKT condition and CQs have been extended to GNEPs.  In particular,~\citet{Pang-2005-Quasi} considered the generalization of Mangasarian-Fromovitz condition~\citep{Mangasarian-1967-Fritz} for QVIs; see also~\citet{Kanzow-2016-Multiplier}. Subsequently,~\cite{Facchinei-2010-Penalty} specialized this notion to GNEPs and designed exact penalty methods with asymptotic global convergence guarantees.~\citet{Kanzow-2016-Augmented} and~\citet{Bueno-2019-Optimality} studied the augmented Lagrangian methods for GNEPs and established their asymptotic global convergence guarantee under CQs, including the constant positive linear dependence property~\citep{Qi-2000-Constant, Andreani-2005-Relation} and the cone continuity property~\citep{Andreani-2016-Cone}. Despite the significance of these results for theory, their practical impact has been limited.  This is because all of these CQs are defined at or around a solution of the GNEP and it is hard to verify them for a nonlinear GNEP where the candidate solution set is generally unavailable. 

\subsection{Organization}
The remainder of the paper is organized as follows. In Section~\ref{sec:prelim}, we provide the basic setup for a class of nonlinear generalized Nash equilibrium problems (NGNEPs) with motivating examples and an $\epsilon$-solution concept. We also provide new results for the accelerated mirror-prox (AMP) algorithm~\citep{Chen-2017-Accelerated} in the strongly monotone case which complements the current iteration complexity analysis. In Section~\ref{sec:results}, we propose two first-order algorithms and establish the bounds on their iteration complexity for solving monotone and strongly monotone NGNEPs. In Section~\ref{sec:experiments}, we conduct some preliminary numerical experiments and highlight the practical performance of our algorithms. In Section~\ref{sec:conclu}, we conclude the paper.

\section{Preliminaries}\label{sec:prelim}
We start by providing the definitions for monotone generalized Nash equilibrium problems. Moreover, we define an $\epsilon$-solution concept based on a variational characterization of GNEPs and review the accelerated mirror-prox algorithm and its iteration complexity. 

\paragraph{Notation.} We let $[N] = \{1, 2, \ldots, N\}$ and let $\br_+^n$ denote the set of all vectors in $\br^n$ with nonnegative entries. For a vector $x \in \br^n$ and $p \in [1, +\infty]$, we denote $\|x\|_p$ as its $\ell_p$-norm and $\|x\|$ as its $\ell_2$-norm. For a matrix $A \in \br^{n \times n}$, we denote $\lambda_{\max}(A)$ and $\lambda_{\min}(A)$ as largest and smallest eigenvalues and $\|A\|$ as the spectral norm. For a function $f: \br^n \mapsto \br$, we let the subdifferential of $f$ at $x$ be $\partial f(x)$. If $f$ is differentiable, we have $\partial f(x) = \{\nabla f(x)\}$ where $\nabla f(x)$ denotes the gradient of $f$ at $x$ and $\nabla_\nu f$ denotes the partial gradient of $f$ with respect to $\nu$-th block of $x$. Given $n \geq 1$ and desired tolerance $\epsilon \in (0, 1)$, the notation $a = O(b(n, \epsilon))$ stands for $a \leq C \cdot b(n, \epsilon)$ where $C$ is independent of $n$ and $1/\epsilon$, and $a = \tilde{O}(b(n, \epsilon))$ indicates the same inequality where $C$ depends on log factors of $n$ and $1/\epsilon$. 

\subsection{Problem setup}
We consider nonlinear generalized Nash equilibrium problems (NGNEPs) with a finite set of players $\nu \in \NCal = [N]$. The $\nu$-th player selects a strategy $x^\nu$ from a subset $X_\nu(\cdot)$ of a finite-dimensional vector space $\br^{n_\nu}$ and the incurred cost is determined by a function $\theta_\nu(\cdot)$. The profile $x = (x^1, x^2, \ldots, x^N)$ denotes all players' actions and $x^{-\nu}$ denotes the joint action of all players but player $\nu$. We rewrite the overall joint action as $x = (x^\nu; x^{-\nu}) \in \br^n$ where $n = n_1 + n_2 + \ldots + n_N$. 
\begin{definition}\label{def:NGNEPs}
We define a class of NGNEPs by a tuple $G=(\NCal, \{X_\nu(\cdot)\}_{\nu \in \NCal}, \{\theta_\nu(\cdot)\}_{\nu \in \NCal})$, where $X_\nu(\cdot): \br^{n-n_\nu} \rightrightarrows \br^{n_\nu}$ is a point-to-set mapping representing the strategy set of player $\nu$ and $\theta_\nu: \br^n \mapsto \br$ is the $\nu$-th player’s cost function. The following conditions are satisfied:
\begin{enumerate}
\item $\NCal = \cup_{s=1}^S \NCal_s$, where $\NCal_i$ and $\NCal_j$ are not necessarily disjoint for $i \neq j$. Each of $\NCal_s$ is associated with $(A_s, b_s) \in \br^{m_s \times (\sum_{i \in \NCal_s} n_i)} \times \br^{m_s}$ and $(E_s, d_s) \in \br^{e_s \times (\sum_{i \in \NCal_s} n_i)} \times \br^{e_s}$. 
\item $X_\nu(\cdot)$ consists of a simple,\footnote{``Simple" means that the projection operator admits a closed-form solution or can be computed efficiently.} convex and compact set $\hat{X}_\nu$ intersected with the constraints that correspond to $\ICal_\nu = \{s: \nu \in \NCal_s\}$. Indeed, we have $X_\nu(x^{-\nu}) = \{x^\nu \in \hat{X}_\nu: A_s x^{\NCal_s} \leq b_s, \ E_s x^{\NCal_s} = d_s, \ \forall s \in \ICal_\nu\}$ where $x^\NCal$ is the concatenation of all $x^i$ for $i \in \NCal$. 
\item $\theta_\nu(\cdot)$ is continuously differentiable and $\nabla_\nu \theta_\nu(\cdot)$ is $\ell_\theta$-Lipschitz, i.e., we have $\|\nabla_\nu \theta_\nu(x) - \nabla_\nu \theta_\nu(x')\| \leq \ell_\theta \|x - x'\|$ for all $x, x' \in \hat{X}$\footnote{Throughout this paper, we define the base strategy set $\hat{X}$ by $\hat{X} = \Pi_{\nu=1}^N \hat{X}_\nu$.}. 
\end{enumerate}
\end{definition}
\begin{definition}\label{def:NGNEP-solution}
The solution set of an NGNEP contains strategy profiles that discourage unilateral deviations. Formally, $\bar{x} \in \br^n$ is a solution if the following statement holds: 
\begin{equation*}
\theta_\nu(\bar{x}^\nu, \bar{x}^{-\nu}) \leq \theta_\nu(x^\nu, \bar{x}^{-\nu}), \quad \textnormal{for all } x^\nu \in X_\nu(\bar{x}^{-\nu}) \textnormal{ and all } \nu \in \NCal. 
\end{equation*}
\end{definition}
The NGNEP is a generalization of Nash equilibrium problems (NEPs) and Definition~\ref{def:NGNEP-solution} extends the notion of Nash equilibrium~\citep{Rosen-1965-Existence} to the setting where the strategy set of each player depends on the strategies of the rival players through equality and inequality constraints. If the NGNEP is convex, we have a variational characterization for NGNEPs. Indeed, if $\theta_\nu(\cdot, x^{-\nu})$ is convex for any fixed $x^{-\nu}$, an equilibrium can be computed by solving a quasi-variational inequality (QVI)~\citep{Chan-1982-Generalized, Harker-1991-Generalized}. More specifically, letting $v(x) = (v_1(x), \ldots, v_N(x))$ be the profile of all players' individual cost gradients, $v_\nu(x) = \nabla_\nu \theta_\nu(x)$, for all $\nu \in \NCal$, we have the following proposition.  
\begin{proposition}\label{Prop:NGNEP-equivalence}
If the NGNEP is convex, $\bar{x} \in \br^n$ is a solution if and only if $\bar{x}^\nu \in X_\nu(\bar{x}^{-\nu})$ and the following QVI holds: $(x - \bar{x})^\top v(\bar{x}) \geq 0$ for all $x \in X_\nu(\bar{x})$.
\end{proposition}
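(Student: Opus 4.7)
The plan is to prove the two directions by reducing the joint variational inequality to the per-player first-order optimality conditions, interpreting the QVI feasible set as $X(\bar{x}) \mydefn \prod_{\nu \in \NCal} X_\nu(\bar{x}^{-\nu})$. Since the strategy sets $X_\nu(\bar{x}^{-\nu})$ are intersections of the simple convex compact $\hat{X}_\nu$ with the linear (in)equality system indexed by $\ICal_\nu$, each is convex, so $X(\bar{x})$ is convex as well and first-order optimality is both necessary and sufficient for each single-player subproblem.

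For the forward direction, assume $\bar{x}$ is a solution. Then, for every $\nu \in \NCal$, $\bar{x}^\nu$ minimizes the convex, differentiable function $\theta_\nu(\,\cdot\,,\bar{x}^{-\nu})$ over the convex set $X_\nu(\bar{x}^{-\nu})$. The first-order optimality condition gives
\[
(x^\nu - \bar{x}^\nu)^\top \nabla_\nu \theta_\nu(\bar{x}) \;\geq\; 0 \quad \textnormal{for all } x^\nu \in X_\nu(\bar{x}^{-\nu}).
\]
Summing these $N$ inequalities over $\nu$, and using the definition $v(\bar{x}) = (\nabla_1\theta_1(\bar{x}),\ldots,\nabla_N\theta_N(\bar{x}))$, yields $(x-\bar{x})^\top v(\bar{x}) \geq 0$ for every $x \in X(\bar{x})$, which is the asserted QVI.

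For the reverse direction, assume that $\bar{x}^\nu \in X_\nu(\bar{x}^{-\nu})$ for all $\nu$ and that the QVI holds on $X(\bar{x})$. Fix an arbitrary player $\nu$ and an arbitrary $x^\nu \in X_\nu(\bar{x}^{-\nu})$. Construct the test profile $y = (x^\nu;\bar{x}^{-\nu})$; by assumption $\bar{x}^\mu \in X_\mu(\bar{x}^{-\mu})$ for $\mu\neq \nu$, and $x^\nu \in X_\nu(\bar{x}^{-\nu})$, so $y \in X(\bar{x})$ and is admissible in the QVI. Since $y$ differs from $\bar{x}$ only in the $\nu$-th block, all other terms drop out of $(y-\bar{x})^\top v(\bar{x})$, leaving
\[
(x^\nu - \bar{x}^\nu)^\top \nabla_\nu \theta_\nu(\bar{x}) \;\geq\; 0.
\]
Convexity of $\theta_\nu(\,\cdot\,,\bar{x}^{-\nu})$ together with the gradient inequality then gives $\theta_\nu(x^\nu,\bar{x}^{-\nu}) \geq \theta_\nu(\bar{x}^\nu,\bar{x}^{-\nu})$. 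Since $\nu$ and $x^\nu$ were arbitrary, Definition~\ref{def:NGNEP-solution} is satisfied and $\bar{x}$ is a solution.

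There is no genuinely hard step; the only point requiring care is the correct interpretation of the feasible set $X(\bar{x})$ in the QVI as the product of per-player feasible sets evaluated at the fixed rival profile $\bar{x}^{-\nu}$, and the observation that the QVI is rich enough (thanks to the product structure) that testing with single-player deviations $y=(x^\nu;\bar{x}^{-\nu})$ already recovers each player's individual first-order optimality condition. This is what makes the argument a clean ``if and only if'' despite the generalized, coupled nature of the constraints.
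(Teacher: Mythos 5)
Your proof is correct and follows essentially the same route as the paper: the forward direction sums the per-player first-order optimality conditions over $\nu$, and the reverse direction restricts the QVI to single-player deviations $x = (x^\nu; \bar{x}^{-\nu})$ and invokes convexity of $\theta_\nu(\cdot, \bar{x}^{-\nu})$ to recover each player's optimality. Your explicit clarification of the feasible set as the product $\prod_{\nu} X_\nu(\bar{x}^{-\nu})$ is a helpful reading of the paper's slightly ambiguous notation, but the argument itself is the same.
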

\begin{proof}
If the QVI holds: $(x - \bar{x})^\top v(\bar{x}) \geq 0$ for all $x \in X_\nu(\bar{x})$, we obtain that $\bar{x}^\nu \in X_\nu(\bar{x}^{-\nu})$ and $(x^\nu - \bar{x}^\nu)^\top v_\nu(\bar{x}) \geq 0$ for all $x^\nu \in X_\nu(\bar{x}^{-\nu})$ by setting $x^{-\nu} = \bar{x}^{-\nu}$. Since the NGNEP is convex, we have $\theta_\nu(\bar{x}^\nu, \bar{x}^{-\nu}) \leq \theta_\nu(x^\nu, \bar{x}^{-\nu})$ for all $x^\nu \in X_\nu(\bar{x}^{-\nu})$. Thus, $\bar{x}$ is a solution. 

Conversely, if $\bar{x}$ is a solution of the NGNEP, the convexity of the NGNEP guarantees that $\bar{x}^\nu \in X_\nu(\bar{x}^{-\nu})$ and $(x^\nu - \bar{x}^\nu)^\top v_\nu(\bar{x}) \geq 0$ for all $x^\nu \in X_\nu(\bar{x}^{-\nu})$. Summing these inequalities over $\nu \in \NCal$, we obtain the desired QVI. 
\end{proof}
Proposition~\ref{Prop:NGNEP-equivalence} is a generalization of standard results for NEPs~\citep[see, e.g.,][Proposition~2.1]{Mertikopoulos-2019-Learning}, and such a characterization ensures existence results; see~\citet[Theorem~5.2]{Chan-1982-Generalized} or~\citet[Theorem~2]{Harker-1991-Generalized}. Uniqueness results, however, are another matter. While the \textit{diagonally strict concavity} (DSC) condition is sufficient for uniqueness of solutions in NEPs~\citep{Rosen-1965-Existence}, in the GNEP setting it does not even guarantee a connected solution set \citep[see Figure~1 in][and the comments after his Corollary 3.1]{Harker-1991-Generalized}. The condition can be augmented to ensure uniqueness but only under additional conditions that are mainly of theoretical interest~\citep[see][Eq.~(33)]{Harker-1991-Generalized}. In this paper, we do not view uniqueness as a reasonable goal but instead argue that the DSC condition itself---which is also referred to as \emph{strict monotonicity} in convex analysis~\citep{Bauschke-2017-Convex}---is strong enough for global convergence rate estimation for the algorithms. Strict monotonicity and the related notions of \textit{monotonicity} and \textit{strong monotonicity} are useful assumptions for analyzing models and algorithms in the variational inequality literature~\citep{Facchinei-2007-Finite}. 
\begin{definition}
An NGNEP (i.e., $G=(\NCal, \{X_\nu(\cdot)\}_{\nu=1}^N, \{\theta_\nu(\cdot)\}_{\nu=1}^N)$) is said to be
\begin{enumerate}
\item \emph{monotone} if $(x - x')^\top(v(x) - v(x')) \geq 0$ for all $x, x' \in \hat{X}$. 
\item \emph{$\alpha$-strongly monotone} if $(x - x')^\top(v(x) - v(x')) \geq \alpha\|x - x'\|^2$ for all $x, x' \in \hat{X}$.
\end{enumerate} 
\end{definition}
\begin{remark}
Proposition~\ref{Prop:NGNEP-equivalence} can be directly applied to these monotone NGNEPs. Indeed, $(x - x')^\top(v(x) - v(x')) \geq 0$ for all $x, x' \in \hat{X}$ guarantees that $(x^\nu - (x')^\nu)^\top (v_\nu(x^\nu, x^{-\nu}) - v_\nu((x')^\nu, x^{-\nu})) \geq 0$ for all $x^\nu, (x')^\nu \in \hat{X}_\nu$ and all $\nu \in \NCal$. Since $v_\nu(x) = \nabla_\nu \theta_\nu(x)$, we have $\theta_\nu(\cdot, x^{-\nu})$ is convex over $\hat{X}_\nu$ for all $\nu \in \NCal$. Moreover, the equilibrium existence is guaranteed here if $\hat{X}_\nu$ is convex and compact for all $\nu \in \NCal$~\citep[Theorem~2]{Harker-1991-Generalized}. See~\citet{Chan-1982-Generalized, Harker-1991-Generalized, Pang-2005-Quasi} for further results that flow from the monotonicity assumption. 

Strongly monotone NGNEPs are a subclass of monotone NGNEPs that admit a unique solution if $X_\nu(x^{-\nu})$ does not depend on the rival's choices $x^{-\nu}$. This feature is important from an algorithmic viewpoint since a natural quantity for measuring the iteration complexity is the distance between the iterates generated by an algorithm and a unique solution. However, such solution uniqueness is not guaranteed in simplest two-player strongly monotone GNEPs~\citep{Harker-1991-Generalized}, demonstrating that, even with the stringent condition of strong monotonicity, it has been unclear which quantity is suitable for measuring the iteration complexity of the algorithms. 
\end{remark}
\begin{remark}
There have been many algorithms for solving the GNEPs, including relaxation methods~\citep{Uryas-1994-Relaxation, Krawczyk-2000-Relaxation, Von-2009-Relaxation}, penalty methods and augmented Lagrangian methods~\citep{Pang-2005-Quasi, Facchinei-2011-Partial, Fukushima-2011-Restricted, Facchinei-2010-Penalty, Kanzow-2016-Multiplier, Kanzow-2016-Augmented, Kanzow-2018-Augmented}, Newton-type methods~\citep{Facchinei-2009-Generalized, Von-2012-Newton, Dreves-2013-Globalized, Izmailov-2014-Error, Fischer-2016-Globally}, the interior-point potential reduction method~\citep{Dreves-2011-Solution}, Lemke's method~\citep{Schiro-2013-Solution}, and the trust-region method~\citep{Galli-2018-Nonmonotone}. Since NGNEPs are a special class of GNEPs, some of these algorithms can be directly applied to solve monotone NGNEPs and provide a global convergence guarantee. On the other hand, nonasymptotic convergence rates (or iteration complexity bounds) are unknown for these algorithms to the best of our knowledge. 
\end{remark}
We provide a few examples of monotone GNEPs to give a sense of their expressivity. Two examples are linear~\citep{Stein-2016-Cone, Dreves-2016-Solving, Dreves-2017-Computing, Stein-2018-Noncooperative} and the other two are nonlinear. 
\begin{example}[Basic economic market model] Consider a set of firms indexed by $\nu \in \NCal$ that offer the same product in a common market. Each firm acts as a price taker and sells the quantity $x_k^\nu \geq 0$ in the price category $p_k^\nu$, where $k \in \KCal$ for an index set $\KCal$ and where the prices are given. We impose the allocation constraint $\sum_{k \in \KCal} x_k^\nu \leq C^\nu$ for $C^\nu > 0$ and also a public constraint, $\sum_{\nu \in \NCal} x_k^\nu \leq D_k$, which assures that the total offering does not exceed the total demand within each price category. Then, the cost function of firm $\nu$ is given by
\begin{equation*}
\theta_\nu(x^\nu, x^{-\nu}) = c_\nu\sum_{k \in \KCal} x_k^\nu - \sum_{k \in \KCal} p_k^\nu x_k^\nu,  
\end{equation*} 
where $c_\nu \geq 0$ is the marginal cost of firm $\nu$ for producing one unit of product. 
\end{example}
\begin{example}[Extended optimal transportation] Consider a set of competing shippers indexed by $\nu \in \NCal$ who hope to transport the quantity $x_{rt}^\nu$ from manufacturers $r \in \RCal$ to customers indexed by $t \in \TCal$. Manufacturer $r$ has a production capacity, $C_r \geq 0$, and customer $t$ needs at least $D_t \geq 0$ units of this good with $\sum_{r \in \RCal} C_r = \sum_{t \in \TCal} D_t$. This implies the supply constraint $\sum_{\nu \in \NCal}\sum_{t \in \TCal} x_{rt}^\nu = C_r$ and the demand constraint $\sum_{\nu \in \NCal}\sum_{r \in \RCal} x_{rt}^\nu = D_t$. The cost function of firm $\nu$ is then given by
\begin{equation*}
\theta_\nu(x^\nu, x^{-\nu}) = \sum_{r \in \RCal} \sum_{t \in \TCal} c_{rt}^\nu x_{rt}^\nu,  
\end{equation*} 
where $c_{rt}^\nu \geq 0$ is the transportation cost from manufacturer $r$ to consumer $t$ by shipper $\nu$. 
\end{example}
\begin{example}[Cournot competition with capacity constraint] Consider a set of firms indexed by $\nu \in \NCal$, each supplying the market with a quantity $x^\nu \in [0, C_\nu]$ up to the firm's capacity $C_\nu \geq 0$. Suppose that we have $\NCal = \cup_{s=1}^S \NCal_s$ and assume \textit{capacity constraints} $C_s \geq 0$ such that $\sum_{\nu \in \NCal_s} x^\nu \leq C_s$. By the law of supply and demand, the good is priced as a decreasing function of the total amount $\bar{x} = \sum_{\nu \in \NCal} x^\nu$ where the common choice is the linear form of $a-b\bar{x}$ for some $a,b>0$. Then, the cost function of firm $\nu$ is given by 
\begin{equation*}
\theta_\nu(x^\nu, x^{-\nu}) = c_\nu(x^\nu) - x^\nu (a-b\bar{x}),
\end{equation*}
which captures the cost of producing $x^\nu$ units of the good (the function $c_\nu(\cdot)$ represents a marginal cost function of firm $\nu$ and is assumed to be convex) minus the total revenue of such production.      
\end{example}
\begin{example}[Resource allocation auctions with bid capacity] Suppose that there is a service provider with resources $s \in \SCal$. These resources are leased to a set of bidders indexed by $\nu \in \NCal$ who place monetary bids $x_s^\nu$ for the utilization of each resource $s \in \SCal$ up to each player's budget: $\sum_{s \in \SCal} x_s^\nu \leq b^\nu$. There is a \textit{bid capacity} $C_s \geq 0$ for resource $s \in \SCal$ such that $\sum_{\nu \in \NCal} x_s^\nu \leq C_s$. Once all bids are tendered, the unit of resource $s$ allocated to bidder $\nu$ is 
\begin{equation*}
\rho_s^\nu = \frac{q_s x_s^\nu}{d_s + \sum_{\nu \in \NCal} x_s^\nu}, 
\end{equation*}
where $q_s$ denotes the total amount of resource $s$ and $d_s > 0$ is the ``entry barrier" for bidding on it). Then, the cost function of bidder $\nu$ is given by
\begin{equation*}
\theta_\nu(x^\nu, x^{-\nu}) = \sum_{s \in \SCal} (x_s^\nu - c_\nu \rho_\s^\nu), 
\end{equation*}
where $c_\nu \geq 0$ is the marginal gain to bidder $\nu$ from acquiring a unit of resources. 
\end{example}
In addition to the above examples, the class of monotone NGNEPs contains all general convex-concave zero-sum games with linear constraints, monotone NEPs, and all convex potential games with private convex constraints (there exists a convex function $f: \hat{X} \mapsto \br$ such that $v_\nu(x) = \nabla_\nu f(x)$ for all $\nu \in \NCal$). They are also a natural generalization of convex programming problems with linear constraints which constitute the backbone of nonlinear optimization~\citep{Ben-2001-Lectures}. As such, the notion of (strong) monotonicity, which will play a crucial role in the analysis of this paper, is not limiting in practice but encompasses a wide range of application problems arising from economics and online decision making~\citep{Facchinei-2007-Finite}. 

\subsection{Solution concept}
Following up Definition~\ref{def:NGNEP-solution} and Proposition~\ref{Prop:NGNEP-equivalence}, we define the notion of $\epsilon$-solution of monotone NGNEPs. In particular, if $\bar{x}$ is a solution of NGNEPs, then the feasibility condition at $\bar{x}$ holds true:
\begin{equation}\label{opt:feas}
\bar{x} \in \hat{X} \cap \left\{x \in \br^n: A_s x^{\NCal_s} \leq b_s, \ E_s x^{\NCal_s} = d_s, \textnormal{ for all } s \in [S]\right\}, 
\end{equation}
and $(\bar{x} - x)^\top v(\bar{x}) \leq 0$ for all $x \in \hat{X}$ satisfying the following condition: 
\begin{equation}\label{opt:feas-VI}
\textnormal{ for all } \nu \in \NCal \ \left\{\begin{array}{rl}
A_s^\nu x^\nu + \sum_{j \in \NCal_s, j \neq \nu} A_s^j \bar{x}^j - b_s \leq 0, & \forall s \in \ICal_\nu, \\ 
E_s^\nu x^\nu + \sum_{j \in \NCal_s, j \neq \nu} E_s^j \bar{x}^j - d_s = 0, & \forall s \in \ICal_\nu. 
\end{array}\right.
\end{equation}
Inspired by the gap function and the weak solution concepts that are commonly used for iteration complexity analysis of algorithms in the VI literature~\citep{Facchinei-2007-Finite}, we define an $\epsilon$-solution of monotone NGNEPs as follows. 
\begin{definition}[$\epsilon$-solution concept]\label{def:eps-solution}
We say a point $\bar{x} \in \hat{X}$ is an $\epsilon$-solution of monotone NGNEPs if the following $\epsilon$-feasibility condition at $\bar{x}$ holds true:
\begin{equation*}
\|\max\{0, A_s\bar{x}^{\NCal_s} - b_s\}\| \leq \epsilon, \quad \|E_s\bar{x}^{\NCal_s} - d_s\| \leq \epsilon, \quad \textnormal{for all } s \in [S], 
\end{equation*}
and an $\epsilon$-quasi-variational inequality at $\bar{x}$ holds true: for all $\nu \in \NCal$, we have 
\begin{equation*}
(\bar{x}^\nu - x^\nu)^\top v_\nu(x^\nu, \bar{x}^{-\nu}) = \tilde{O}(\epsilon), 
\end{equation*}
for all $x^\nu \in \hat{X}_\nu$ satisfying the following condition: 
\begin{equation*}
\|\max\{0, A_s^\nu x^\nu + \sum_{j \in \NCal_s, j \neq \nu} A_s^j \bar{x}^j - b_s\}\| \leq \epsilon, \quad \|E_s^\nu x^\nu + \sum_{j \in \NCal_s, j \neq \nu} E_s^j \bar{x}^j - d_s\| \leq \epsilon, \quad \textnormal{for all } s \in \ICal_\nu. 
\end{equation*}
\end{definition}
This definition reduces to standard $\epsilon$-optimality notions in special settings. For example,  monotone NGNEPs reduce to monotone VIs when $A_s = 0$, $E_s = 0$, $b_s = 0$ and $d_s = 0$ for all $s \in [S]$. If $\bar{x}$ is a $\epsilon$-weak solution of the VI such that $(\tilde{x} - x)^\top v(x) \leq \epsilon$ for all $x \in \hat{X}$, it is an $\epsilon$-solution. Indeed, letting $x^{-\nu} = \bar{x}^{-\nu}$ for each $\nu \in \NCal$ yields the desired result. Thus, our notion in Definition~\ref{def:eps-solution} is a generalization of the $\epsilon$-weak solution concept, which has been adopted for measuring the iteration complexity in the VI setting~\citep{Nemirovski-2004-Prox, Nesterov-2007-Dual, Malitsky-2015-Projected, Kotsalis-2022-Simple}. Moreover, monotone NGNEPs reduce to linearly constrained convex problems for the case of $N=1$, where the similar notions have been adopted for linearly constrained nonsmooth nonconvex problems~\citep{Jiang-2019-Structured} as well as nonlinearly constrained nonsmooth convex problems~\citep{Rockafellar-1976-Augmented, Yu-2017-Simple, Xu-2021-Iteration}. 
\begin{remark}\label{remark:NGNEP-KKT}
We show that an $\epsilon$-solution is a solution of monotone NGNEPs when $\epsilon=0$. Indeed, we have $\|\max\{0, A_s\bar{x}^{\NCal_s} - b_s\}\| \leq 0$ and $\|E_s\bar{x}^{\NCal_s} - d_s\| \leq 0$ for all $s \in [S]$ and $\bar{x} \in \hat{X}$. Putting these pieces together yields Eq.~\eqref{opt:feas}. Furthermore, we have $(\bar{x} - x)^\top v(x) \leq 0$ for all $x \in \hat{X}$ satisfying the following condition:
\begin{equation}\label{opt:feas-VI-tmp}
\textnormal{ for all } \nu \in \NCal \ \left\{\begin{array}{rl}
A_s^\nu x^\nu + \sum_{j \in \NCal_s, j \neq \nu} A_s^j \bar{x}^j - b_s \leq 0, & \forall s \in \ICal_\nu, \\ 
E_s^\nu x^\nu + \sum_{j \in \NCal_s, j \neq \nu} E_s^j \bar{x}^j - d_s = 0, & \forall s \in \ICal_\nu. 
\end{array}\right.
\end{equation} 
Now it suffices to show that $(\bar{x} - x)^\top v(\bar{x}) \leq 0$ for all $x \in \hat{X}$ satisfying the conditions in Eq.~\eqref{opt:feas-VI-tmp}. Indeed, we fix $x \in \hat{X}_\nu$ such that Eq.~\eqref{opt:feas-VI-tmp} holds at $x$ and let $x(t)=tx + (1-t)\bar{x}$ be a function of $t \in [0, 1]$. Since Eq.~\eqref{opt:feas} holds and $\hat{X}$ is convex, we have $x(t) \in \hat{X}$ and $x(t)$ satisfies Eq.~\eqref{opt:feas-VI-tmp} for any $t \in [0, 1]$. Therefore, we have 
\begin{equation*}
(\bar{x} - x)^\top v(tx + (1-t)\bar{x}) = \tfrac{1}{t}(\bar{x} - x(t))^\top v(x(t)) \leq 0,
\end{equation*}
which implies that (by letting $t \rightarrow 0$) $(\bar{x} - x)^\top v(\bar{x}) \leq 0$. Since $x \in \hat{X}$ is chosen as any point satisfying Eq.~\eqref{opt:feas-VI-tmp}, we obtain the claimed result. 
\end{remark}

An alternative way to characterize the solution concept in the context of GNEPs is based on the introduction of Lagrangian multipliers and Karush-Kuhn-Tucker (KKT) conditions tailored to GNEPs~\citep{Pang-2005-Quasi, Facchinei-2010-Penalty, Kanzow-2016-Multiplier, Bueno-2019-Optimality}. In particular, we say a point $\bar{x} \in \hat{X}$ is a KKT point of NGNEPs when each block $\bar{x}^\nu$ is a KKT point for minimizing the function $\theta_\nu(\cdot, \bar{x}^{-\nu})$ over the set $X_\nu(\bar{x}^{-\nu})$ for all $\nu \in \NCal$; that is, the following feasibility condition at $\bar{x}$ holds true:
\begin{equation*}
\bar{x} \in \hat{X} \cap \left\{x \in \br^n: A_s x^{\NCal_s} \leq b_s, \ E_s x^{\NCal_s} = d_s, \textnormal{ for all } s \in [S]\right\}, 
\end{equation*}
and there exist Lagrangian multipliers $\bar{\lambda}^{\nu, s} \geq 0$ and $\bar{\mu}^{\nu, s}$ such that 
\begin{equation*}
\textnormal{ for all } \nu \in \NCal \ \left\{\begin{array}{rl}
(\bar{x}^\nu - x^\nu)^\top(v_\nu(\bar{x}) + \sum_{s \in \ICal_\nu} ((A_s^\nu)^\top\bar{\lambda}^{\nu, s} + (E_s^\nu)^\top\bar{\mu}^{\nu, s})) \leq 0, & \forall x^\nu \in \hat{X}_\nu, \\
(\bar{\lambda}^{\nu, s})^\top(A_s\bar{x}^{\NCal_s} - b_s) = 0, & \forall s \in \ICal_\nu. 
\end{array}\right.
\end{equation*}
It is natural to investigate the relationship between solutions and KKT points in the context of GNEPs. In general, constraint qualifications (CQs) are necessary for ensuring that a solution is a KKT point~\citep{Bueno-2019-Optimality}. However, monotone NGNEPs have special structure in the sense that $\theta_\nu(\cdot, x^{-\nu})$ is convex for any fixed $x^{-\nu}$, $\hat{X}$ is simple and all the coupled constraints are linear. The features guarantee the equivalence between the solution concept in Definition~\ref{def:NGNEP-solution} and the above KKT conditions tailored to NGNEPs. A proof based on Farkas's lemma and Proposition~\ref{Prop:NGNEP-equivalence} is presented in Appendix~\ref{app:NGNEP-KKT}. 
\begin{theorem}\label{Thm:NGNEP-KKT}
If the NGNEP is monotone, $\bar{x} \in \hat{X}$ is a solution (cf.\ Definition~\ref{def:NGNEP-solution}) if and only if it is a KKT point of this NGNEP. 
\end{theorem}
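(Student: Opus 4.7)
The plan is to establish both directions of the equivalence separately, exploiting two crucial features of monotone NGNEPs: the convexity of $\theta_\nu(\cdot, \bar{x}^{-\nu})$ (which follows from monotonicity, as noted in the remark following the monotonicity definition), and the linearity of all coupled constraints, which guarantees that Abadie's constraint qualification holds automatically.

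For the forward direction ($\Rightarrow$), I would begin by observing that if $\bar{x}$ satisfies Definition~\ref{def:NGNEP-solution}, then for each $\nu$ the block $\bar{x}^\nu$ minimizes the convex function $\theta_\nu(\cdot,\bar{x}^{-\nu})$ over $X_\nu(\bar{x}^{-\nu})$, which equals the simple convex compact set $\hat{X}_\nu$ intersected with finitely many linear inequalities and equalities (the ones indexed by $\ICal_\nu$ with the rival blocks fixed at $\bar{x}^{-\nu}$). By Proposition~\ref{Prop:NGNEP-equivalence}, this is equivalent to the per-player VI $(x^\nu-\bar{x}^\nu)^\top v_\nu(\bar{x})\geq 0$ for all $x^\nu\in X_\nu(\bar{x}^{-\nu})$, which in turn says $-v_\nu(\bar{x})\in N_{X_\nu(\bar{x}^{-\nu})}(\bar{x}^\nu)$. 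Since the polyhedral part is given by linear constraints, I would invoke Farkas's lemma to write
\[
-v_\nu(\bar{x}) = w_\nu + \sum_{s\in\ICal_\nu}(A_s^\nu)^\top \bar{\lambda}^{\nu,s} + \sum_{s\in\ICal_\nu}(E_s^\nu)^\top \bar{\mu}^{\nu,s},
\]
with $w_\nu \in N_{\hat{X}_\nu}(\bar{x}^\nu)$, $\bar{\lambda}^{\nu,s}\geq 0$ supported on the active inequality constraints, and $\bar{\mu}^{\nu,s}$ free. The support condition yields the complementarity relation $(\bar{\lambda}^{\nu,s})^\top(A_s\bar{x}^{\NCal_s}-b_s)=0$, while $w_\nu \in N_{\hat{X}_\nu}(\bar{x}^\nu)$ rewrites as the stationarity VI over $\hat{X}_\nu$ stated in the KKT definition.

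For the backward direction ($\Leftarrow$), I would assume $\bar{x}$ is a KKT point and verify the VI of Proposition~\ref{Prop:NGNEP-equivalence} player by player. Fix $\nu$ and $x^\nu\in X_\nu(\bar{x}^{-\nu})$; testing the stationarity inequality at $x^\nu\in\hat{X}_\nu$ and rearranging gives
\[
(x^\nu-\bar{x}^\nu)^\top v_\nu(\bar{x}) \geq \sum_{s\in\ICal_\nu}(\bar{\lambda}^{\nu,s})^\top A_s^\nu(\bar{x}^\nu-x^\nu) + \sum_{s\in\ICal_\nu}(\bar{\mu}^{\nu,s})^\top E_s^\nu(\bar{x}^\nu-x^\nu).
\]
Using the feasibility of $x^\nu$, namely $A_s^\nu x^\nu + \sum_{j\in\NCal_s,j\neq\nu}A_s^j\bar{x}^j \leq b_s$ and $E_s^\nu x^\nu + \sum_{j\in\NCal_s,j\neq\nu}E_s^j\bar{x}^j = d_s$, together with primal feasibility and complementarity of $\bar{x}$, each summand on the right becomes nonnegative (the equality terms vanish; the inequality terms are nonnegative because $\bar{\lambda}^{\nu,s}\geq 0$ is supported on active indices). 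Hence the per-player VI holds, and Proposition~\ref{Prop:NGNEP-equivalence} yields that $\bar{x}$ is a solution.

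The main obstacle is the forward direction. Since \citet{Bueno-2019-Optimality} exhibit a counterexample to the same equivalence for general GNEPs, one cannot hope to derive multipliers without using the specific structure of NGNEPs; the argument must carefully exploit the fact that all coupled constraints are linear (so that polyhedral Farkas applies without an additional CQ) and that $\hat{X}_\nu$ is kept inside the normal cone rather than being represented by further multipliers. The other subtle point is to make sure the multipliers extracted for the normal cone of the polyhedral piece are supported on active constraints, which is what delivers complementarity cleanly; this is a standard consequence of Farkas applied to the tangent cone of a polyhedron, but should be stated explicitly in the write-up.
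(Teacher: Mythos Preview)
Your proposal is correct and follows essentially the same route as the paper. For the forward direction, both you and the paper pass from the per-player VI to a normal-cone inclusion, separate the contribution of $\hat{X}_\nu$ via an element of $N_{\hat{X}_\nu}(\bar{x}^\nu)$ (your $w_\nu$, the paper's $\bar{\xi}^\nu$), and then apply a Farkas-type lemma to the remaining polyhedral system restricted to the active inequality constraints, which yields multipliers with the required complementarity. For the backward direction the arguments diverge slightly but are equivalent: the paper introduces the Lagrangian $L_\nu(x^\nu)$, uses convexity of $\theta_\nu(\cdot,\bar{x}^{-\nu})$ to conclude $\bar{x}^\nu$ minimizes $L_\nu$ over $\hat{X}_\nu$, and then bounds $L_\nu(x^\nu)\leq \theta_\nu(x^\nu,\bar{x}^{-\nu})$ for feasible $x^\nu$ to verify Definition~\ref{def:NGNEP-solution} directly; you instead verify the per-player VI and invoke Proposition~\ref{Prop:NGNEP-equivalence}. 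Your version is slightly more compact, while the paper's makes the role of convexity more explicit.
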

For completeness, we also present the notion of an $\epsilon$-KKT point. 
\begin{definition}[$\epsilon$-KKT]\label{def:eps-NGNEP-KKT}
We say a point $\bar{x} \in \hat{X}$ is an \emph{$\epsilon$-KKT point} for a monotone NGNEP if the following $\epsilon$-feasibility condition at $\bar{x}$ holds true:
\begin{equation*}
\|\max\{0, A_s\bar{x}^{\NCal_s} - b_s\}\| \leq \epsilon, \quad \|E_s\bar{x}^{\NCal_s} - d_s\| \leq \epsilon, \quad \textnormal{for all } s \in [S], 
\end{equation*}
and there exist some Lagrangian multipliers $\bar{\lambda}^{\nu, s} \geq 0$ and $\bar{\mu}^{\nu, s}$ such that 
\begin{equation*}
\textnormal{ for all } \nu \in \NCal \ \left\{\begin{array}{rl}
(\bar{x}^\nu - x^\nu)^\top(v_\nu(\bar{x}) + \sum_{s \in \ICal_\nu} ((A_s^\nu)^\top\bar{\lambda}^{\nu, s} + (E_s^\nu)^\top\bar{\mu}^{\nu, s})) \leq \epsilon, & \forall x^\nu \in \hat{X}_\nu, \\
\|\min\{ \bar{\lambda}^{\nu, s}, -(A_s\bar{x}^{\NCal_s} - b_s)\}\| \leq \epsilon, & \forall s \in \ICal_\nu. 
\end{array}\right.
\end{equation*}
\end{definition}
\begin{remark}
The notion of $\epsilon$-KKT point has been introduced in~\citet[Definition~5.2]{Bueno-2019-Optimality} for more general GNEPs and Definition~\ref{def:eps-NGNEP-KKT} is that  notion applied to NGNEPs. For the special case of monotone VIs with $\bar{\lambda}^{\nu, s} = 0$ and $\bar{\mu}^{\nu, s} = 0$, Definition~\ref{def:eps-NGNEP-KKT} corresponds to the $\epsilon$-strong solution concept which is different from the $\epsilon$-weak solution concept from an algorithmic point of view (see~\citet{Diakonikolas-2020-Halpern} for recent progress).  Can we design some variants of the algorithms in this paper to pursue an $\epsilon$-KKT point as in Definition~\ref{def:eps-NGNEP-KKT} and prove their global convergence rate? This is an interesting open question.
\end{remark}

\subsection{Accelerated mirror-prox scheme}
We review the deterministic accelerated mirror-prox (AMP) algorithm~\citep{Chen-2017-Accelerated}, which provides a multi-step acceleration scheme for solving \textit{composite} smooth and monotone VIs with an optimal complexity bound guarantee. More specifically, the AMP algorithm aims at solving the following class of VIs: 
\begin{equation}\label{subprob:CMVI}
\textnormal{Find } z^\star \in Z: \quad (z - z^\star)^\top(F(z^\star) + \nabla G(z^\star)) \geq 0, \quad \textnormal{for all } z \in Z,  
\end{equation} 
where $F: Z \mapsto \br^n$ is $\ell_F$-Lipschitz and monotone, $G: X \mapsto \br$ is $\ell_G$-smooth and convex, and and $Z$ is a closed and convex nonempty set with diameter $D_Z>0$. With the initialization $z_1 = z_1^{\textnormal{ag}} = w_1 \in Z$, a typical iteration of the AMP algorithm with Euclidean projection is 
\begin{equation*}
\begin{array}{lcl}
z_k^{\textnormal{md}} & = & (1-\alpha_k)z_k^{\textnormal{ag}} + \alpha_k w_k, \\
z_{k+1} & = & \argmin_{z \in Z} \gamma_k(z - w_k)^\top(F(w_k) + \nabla G(z_k^{\textnormal{md}})) + \tfrac{1}{2}\|z - w_k\|^2, \\
w_{k+1} & = & \argmin_{z \in X} \gamma_k(z - w_k)^\top(F(z_{k+1}) + \nabla G(z_k^{\textnormal{md}})) + \tfrac{1}{2}\|z - w_k\|^2, \\
z_{k+1}^{\textnormal{ag}} & = & (1-\alpha_k)z_k^{\textnormal{ag}} + \alpha_k z_{k+1}.  
\end{array}
\end{equation*} 
An iteration complexity bound of the AMP algorithm in terms of the number of gradient evaluations is presented in~\citet[Corollary~1]{Chen-2017-Accelerated}.  We summarize the result in the following theorem.\footnote{We only focus on the deterministic algorithm with Euclidean projection. As such, we provide a simplified version of~\citet[Corollary~1]{Chen-2017-Accelerated} in the Euclidean setting and $\sigma=0$ in Theorem~\ref{Thm:AMP-MT}.}
\begin{theorem}\label{Thm:AMP-MT}
Suppose that $F: Z \mapsto \br^n$ is $\ell_F$-Lipschitz and monotone and $G: Z \mapsto \br$ is $\ell_G$-smooth and convex, and we set the parameters $\alpha_k=\tfrac{2}{k+1}$ and $\gamma_k=\tfrac{k}{4\ell_G+3k\ell_F}$. Then, the following inequality holds for all $k \geq 2$, 
\begin{equation*}
\max_{z \in Z} \left\{G(z_k^{\textnormal{ag}}) - G(z) + (z_k^{\textnormal{ag}} - z)^\top F(z)\right\} \leq \tfrac{16\ell_G D_Z^2}{k(k-1)} + \tfrac{12\ell_F D_Z^2}{k-1},
\end{equation*}
and the required number of gradient evaluations to return the point $\hat{z} \in Z$ satisfying that $\max_{z \in Z} \{G(\hat{z}) - G(z) + (\hat{z} - z)^\top F(z)\} \leq \epsilon$ is bounded by $O\left(\sqrt{\frac{\ell_G D_Z^2}{\epsilon}} + \frac{\ell_F D_Z^2}{\epsilon}\right)$. 
\end{theorem}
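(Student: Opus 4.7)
The plan is to reproduce the Lyapunov-function argument of~\citet{Chen-2017-Accelerated} in the deterministic Euclidean setting. The essential idea is to interleave a Nesterov-style descent estimate for the smooth convex term \(G\) (which alone would give an \(O(\ell_G D_Z^2/k^2)\) rate) with a Nemirovski-style mirror-prox estimate for the Lipschitz monotone operator \(F\) (which alone would give an \(O(\ell_F D_Z^2/k)\) rate), choosing the weights \(\alpha_k, \gamma_k\) so that both estimates survive a common weighted telescoping. The composite gap \(\max_{z\in Z}\{G(z_k^{\textnormal{ag}}) - G(z) + (z_k^{\textnormal{ag}}-z)^\top F(z)\}\) is the right quantity to control here because monotonicity of \(F\) lets us replace \(F(z_{k+1})\) by \(F(z)\) under this saddle-point-style measure.

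First I would write down the first-order optimality conditions for the two prox steps defining \(z_{k+1}\) and \(w_{k+1}\), then apply the three-point identity to extract a telescoping term \(\tfrac12(\|z-w_k\|^2 - \|z-w_{k+1}\|^2)\) together with a mismatch term \(\tfrac12\|w_{k+1}-z_{k+1}\|^2\). Next I would invoke the \(\ell_F\)-Lipschitz property together with Cauchy--Schwarz (the classical mirror-prox trick)
\[
\gamma_k (F(z_{k+1})-F(w_k))^\top(w_{k+1}-z_{k+1}) \le \tfrac12\|w_{k+1}-z_{k+1}\|^2 + \tfrac{\gamma_k^2\ell_F^2}{2}\|z_{k+1}-w_k\|^2,
\]
which absorbs the mismatch provided \(\gamma_k\ell_F\) stays below a suitable absolute constant. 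Then, applying smoothness of \(G\) at the averaged iterate \(z_{k+1}^{\textnormal{ag}} = (1-\alpha_k)z_k^{\textnormal{ag}} + \alpha_k z_{k+1}\) together with the convexity inequality \(G(z) \ge G(z_k^{\textnormal{md}}) + \nabla G(z_k^{\textnormal{md}})^\top(z - z_k^{\textnormal{md}})\) yields the accelerated one-step descent
\[
G(z_{k+1}^{\textnormal{ag}}) - G(z) \le (1-\alpha_k)(G(z_k^{\textnormal{ag}}) - G(z)) + \alpha_k \nabla G(z_k^{\textnormal{md}})^\top(z_{k+1}-z) + \tfrac{\alpha_k^2 \ell_G}{2}\|z_{k+1}-z_k^{\textnormal{md}}\|^2,
\]
while monotonicity of \(F\) supplies \((z_{k+1}-z)^\top F(z) \le (z_{k+1}-z)^\top F(z_{k+1})\), linking the \(F(w_k)\) in the prox step to the \(F(z)\) that appears in the gap.

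Combining these three ingredients, multiplying by the weight \(\gamma_k/\alpha_k\), and telescoping from \(k=1\) through \(k-1\) produces the stated bound, provided the parameters satisfy the recursive consistency relations \((1-\alpha_k)\gamma_k/\alpha_k \le \gamma_{k-1}/\alpha_{k-1}\), \(\alpha_k \ell_G \gamma_k \le 1\), and \(\gamma_k \ell_F \le c\) for a universal \(c<1\); with \(\alpha_k = 2/(k+1)\) and \(\gamma_k = k/(4\ell_G+3k\ell_F)\) these are routine to check. The iteration-complexity statement then follows by inverting \(\epsilon \asymp 16\ell_G D_Z^2/(k(k-1)) + 12\ell_F D_Z^2/(k-1)\), which forces \(k = O(\sqrt{\ell_G D_Z^2/\epsilon} + \ell_F D_Z^2/\epsilon)\).

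The main obstacle is the parameter calibration: one must choose \(\alpha_k\) and \(\gamma_k\) so that the smoothness-induced error \(\alpha_k^2 \ell_G \|z_{k+1} - z_k^{\textnormal{md}}\|^2\) and the Lipschitz-induced error \(\gamma_k^2\ell_F^2 \|z_{k+1}-w_k\|^2\) are \emph{simultaneously} absorbed by the telescoping Bregman terms. The specific schedules \(\alpha_k = 2/(k+1)\) (the standard Nesterov pattern) and \(\gamma_k = k/(4\ell_G + 3k\ell_F)\) (interpolating between a Nesterov-type step \(\Theta(k/\ell_G)\) for small \(k\) and a mirror-prox step \(\Theta(1/\ell_F)\) for large \(k\)) are engineered exactly for this balancing act, and this is the only point in the argument where the two error scales must be coordinated rather than handled in isolation.
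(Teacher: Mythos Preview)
Your proposal is correct and matches the standard argument. Note, however, that the paper does not actually give a proof of this theorem: it quotes the result directly from~\citet[Corollary~1]{Chen-2017-Accelerated}. That said, the paper's Appendix~\ref{app:AMP-proof} does record the two key ingredients you describe---Lemma~\ref{Lemma:AMP-descent} (your mirror-prox three-point estimate with the Lipschitz absorption) and Lemma~\ref{Lemma:AMP-gap} (your accelerated descent for $G$ combined with monotonicity of $F$, taken there with $\alpha=0$)---in order to prove the strongly monotone companion Theorem~\ref{Thm:AMP-SMT}. Your outline is exactly the combination of those two lemmas with the weighted telescoping under $\alpha_k=2/(k+1)$, $\gamma_k=k/(4\ell_G+3k\ell_F)$, so there is no methodological difference.

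One small slip: in your smoothness inequality the quadratic error term should be $\tfrac{\alpha_k^2\ell_G}{2}\|z_{k+1}-w_k\|^2$, not $\|z_{k+1}-z_k^{\textnormal{md}}\|^2$, since the update rules give $z_{k+1}^{\textnormal{ag}}-z_k^{\textnormal{md}}=\alpha_k(z_{k+1}-w_k)$. This is precisely what makes the smoothness error commensurable with the Lipschitz error $\tfrac{\gamma_k^2\ell_F^2}{2}\|z_{k+1}-w_k\|^2$ so that both can be absorbed by the single term $\tfrac{1}{2}\|z_{k+1}-w_k\|^2$ coming from the prox estimate; with $\|z_{k+1}-z_k^{\textnormal{md}}\|^2$ the absorption would not go through as written.
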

\begin{remark}
There has been considerable interest in the development of algorithms for solving monotone VI problems; among them we highlight the extragradient method~\citep{Korpelevich-1976-Extragradient, Tseng-2008-Accelerated}, mirror-prox~\citep{Nemirovski-2004-Prox}, dual extrapolation~\citep{Nesterov-2007-Dual}, reflected gradient~\citep{Malitsky-2015-Projected}, accelerated mirror-prox~\citep{Chen-2017-Accelerated}, Halpern iteration~\citep{Diakonikolas-2020-Halpern} and operator extrapolation~\citep{Kotsalis-2022-Simple}. For a general introduction, see~\citet{Facchinei-2007-Finite}.
\end{remark}
\begin{remark}
The subproblems in our frameworks are in the form of Eq.~\eqref{subprob:CMVI} with $Z = \hat{X}$, $\ell_G = O(\tfrac{1}{\epsilon})$ for some $\epsilon>0$ and $\ell_F=O(1)$. As such, the AMP algorithm is more suitable than other algorithms for playing a role in the subroutine for solving monotone NGNEPs in the sense that it achieves an complexity bound with better dependence on $1/\epsilon$.
\end{remark}
We provide the iteration complexity bound of the AMP algorithm for solving the VI in Eq.~\eqref{subprob:CMVI} in terms of the number of gradient evaluations in which $F: Z \mapsto \br^n$ is further assumed to be $\alpha$-strongly monotone. We summarize the result in Theorem~\ref{Thm:AMP-SMT} and refer interested readers to Appendix~\ref{app:AMP-proof} for the proof details.  
\begin{theorem}\label{Thm:AMP-SMT}
Suppose that $F: X \mapsto \br^n$ is $\ell_F$-Lipschitz and $\alpha$-strongly monotone and $G: Z \mapsto \br$ is $\ell_G$-smooth and convex, and we set the parameters $\alpha_k = \tfrac{1}{4}\min\{\frac{\alpha}{\ell_F}, \sqrt{\frac{\alpha}{\ell_G}}\}$ and $\gamma_k = \tfrac{\alpha_k}{\alpha}$. Then, the following inequality holds for all $k \geq 2$:
\begin{equation*}
\max_{z \in Z} \left\{G(z_k^{\textnormal{ag}}) - G(z) + (z_k^{\textnormal{ag}} - z)^\top F(z)\right\} \leq \left(1-\tfrac{1}{4}\min\left\{\tfrac{\alpha}{\ell_F}, \sqrt{\tfrac{\alpha}{\ell_G}}\right\}\right)^{k-1}\left(\ell_F+\tfrac{\ell_G+\alpha}{2}\right)D_Z^2,
\end{equation*}
and the required number of gradient evaluations to return the point $\hat{z} \in Z$ satisfying that $\max_{z \in Z} \{G(\hat{z}) - G(z) + (\hat{z} - z)^\top F(z)\} \leq \epsilon$ is bounded by $O\left(\left(\sqrt{\frac{\ell_G}{\alpha}} + \frac{\ell_F}{\alpha}\right)\log\left(\frac{(\ell_F+\ell_G)D_Z^2}{\epsilon}\right)\right)$. 
\end{theorem}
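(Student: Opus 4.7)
The strategy is to extend the AMP convergence analysis underlying Theorem~\ref{Thm:AMP-MT} from~\citet{Chen-2017-Accelerated} to the $\alpha$-strongly monotone setting by using constant step sizes and converting the additive telescoping argument into a multiplicative contraction driven by the strong-monotonicity slack.

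\textbf{Step 1 (one-step inequality).} Following the proof of Theorem~\ref{Thm:AMP-MT}, I would first reproduce the per-iteration bound
\[
\gamma_k H_k(z) + \tfrac{1}{2}\|w_{k+1}-z\|^2 \;\le\; (1-\alpha_k)\gamma_k H_{k-1}(z) + \tfrac{1}{2}\|w_k-z\|^2 + R_k,
\]
where $H_k(z) := G(z_{k+1}^{\textnormal{ag}}) - G(z) + (z_{k+1}^{\textnormal{ag}}-z)^\top F(z)$, and the residual $R_k$ collects the smoothing errors from replacing $\nabla G(\cdot)$ by $\nabla G(z_k^{\textnormal{md}})$ (controlled by $\alpha_k\gamma_k \ell_G$) and from replacing $F(z_{k+1})$ by $F(w_k)$ in the first projection (controlled by $\gamma_k \ell_F$). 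This follows from the convexity of $G$, the Lipschitz estimates on $\nabla G$ and $F$, and the optimality conditions for the two $\argmin$ updates. With the prescribed $\alpha_k\equiv\tilde\alpha:=\tfrac{1}{4}\min\{\alpha/\ell_F,\sqrt{\alpha/\ell_G}\}$ and $\gamma_k\equiv \tilde\alpha/\alpha$, both $\alpha_k\gamma_k\ell_G\le \alpha_k^2/4$ and $\gamma_k\ell_F\le 1/4$ hold, so $R_k$ is absorbed into the progress terms.

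\textbf{Step 2 (strong monotonicity for contraction).} In the monotone analysis the cross term $(z_{k+1}-z)^\top(F(z_{k+1})-F(z))\ge 0$ is only used as a nonnegativity bound; under $\alpha$-strong monotonicity this inner product is at least $\alpha\|z_{k+1}-z\|^2$, producing an additional $\alpha_k\gamma_k\alpha\|z_{k+1}-z\|^2$ term on the right-hand side of Step~1. Combining this with the identity $\|w_{k+1}-z\|^2 = (1-\alpha_k)\|w_k-z\|^2+\alpha_k\|z_{k+1}-z\|^2 - \alpha_k(1-\alpha_k)\|z_{k+1}-w_k\|^2$ (after symmetrization using the two projection steps), I would upgrade the additive inequality to a multiplicative contraction on the potential
\[
\Phi_k(z)\;:=\;\gamma_k H_{k-1}(z) + \tfrac{1}{2}\|w_k-z\|^2,\qquad \Phi_{k+1}(z)\le (1-\tilde\alpha)\,\Phi_k(z),
\]
valid for every $z\in Z$ and every $k\ge 1$, with the choice $\gamma_k=\alpha_k/\alpha$ being precisely what equates the contraction rate across the two budgets.

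\textbf{Step 3 (unrolling and complexity).} Iterating the contraction gives $\Phi_k(z)\le(1-\tilde\alpha)^{k-1}\Phi_1(z)$. Since $z_1=w_1=z_1^{\textnormal{ag}}$, bounding $H_0(z)$ by $\ell_F\|z_1-z\|\cdot\|z_1-z\|+\tfrac{\ell_G}{2}\|z_1-z\|^2+\tfrac{\alpha}{2}\|z_1-z\|^2$ via the Lipschitz estimates on $F$ and $\nabla G$ and convexity/strong-monotonicity, together with $\|w_1-z\|\le D_Z$, yields $\Phi_1(z)\le (\ell_F+(\ell_G+\alpha)/2)\,D_Z^2$. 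Taking the maximum over $z\in Z$ in $\Phi_k(z)\ge \gamma_k H_{k-1}(z)$ gives the stated linear bound. Solving $(1-\tilde\alpha)^{k-1}(\ell_F+(\ell_G+\alpha)/2)D_Z^2\le \epsilon$ for $k$ gives $k=O\bigl(\tilde\alpha^{-1}\log((\ell_F+\ell_G)D_Z^2/\epsilon)\bigr)=O\bigl((\sqrt{\ell_G/\alpha}+\ell_F/\alpha)\log((\ell_F+\ell_G)D_Z^2/\epsilon)\bigr)$.

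\textbf{Main obstacle.} The delicate point is the step-size choice in Step~1: one must simultaneously satisfy the $G$-smoothing budget $\gamma_k\ell_G\lesssim \alpha_k$, the $F$-Lipschitz budget $\gamma_k\ell_F\lesssim 1$, and retain enough strong-monotonicity slack in Step~2 to dominate the residual $R_k$ rather than merely cancel it. The two regimes in the definition $\tilde\alpha=\tfrac{1}{4}\min\{\alpha/\ell_F,\sqrt{\alpha/\ell_G}\}$ correspond to the Lipschitz-monotone-dominated and the smooth-convex-dominated regimes, and the constant $\tfrac{1}{4}$ is precisely the slack needed so that Step~1's error budgets do not eat up the Step~2 contraction gain. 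Verifying that the same constant step sizes are simultaneously admissible for both halves of the $\min$---without having to switch schedules in different regimes---is the main technical content of the proof.
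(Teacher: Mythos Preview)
Your high-level strategy matches the paper's exactly: combine the extragradient descent estimate with the gap-function recursion, harvest the extra $-\alpha\|z_{k+1}-z\|^2$ from strong monotonicity, and show a geometric contraction of a Lyapunov that mixes $Q(z_k^{\textnormal{ag}},z)$ with a squared distance. Two concrete pieces of your execution, however, do not hold as written.

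First, the ``identity'' in Step~2,
\[
\|w_{k+1}-z\|^2 = (1-\alpha_k)\|w_k-z\|^2+\alpha_k\|z_{k+1}-z\|^2 - \alpha_k(1-\alpha_k)\|z_{k+1}-w_k\|^2,
\]
is false: this is the parallelogram identity for the \emph{convex combination} $(1-\alpha_k)w_k+\alpha_k z_{k+1}$, but in AMP $w_{k+1}$ is the second projection step and is not a convex combination of $w_k$ and $z_{k+1}$. No ``symmetrization of the two projection steps'' produces this equality. The paper avoids it entirely: after combining the descent and gap lemmas one is left with an excess term $\tfrac{\alpha_0\alpha}{2}\|w_k-z\|^2$, and this is simply bounded via Young's inequality by $\alpha_0\alpha\|z_{k+1}-z\|^2+\alpha_0\alpha\|z_{k+1}-w_k\|^2$; the first piece is absorbed by the strong-monotonicity slack and the second by the $\|z_{k+1}-w_k\|^2$ budget.

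Second, and relatedly, your potential $\Phi_k=\gamma_0 H_{k-1}+\tfrac12\|w_k-z\|^2$ does not contract with the estimates you have. Once the descent and gap inequalities are combined with $\alpha_k/\gamma_k=\alpha$, the distance terms enter with coefficient $\tfrac{\alpha}{2}$, not $\tfrac12$; the paper's Lyapunov is $E_k=Q(z_k^{\textnormal{ag}},z)+\tfrac{\alpha}{2}\|w_k-z\|^2$, and precisely this weighting makes $E_{k+1}\le(1-\alpha_0)E_k$ close after the Young step above. With your weighting the recursion leaves a positive $\tfrac{1-\alpha_0}{2}\|w_{k+1}-z\|^2$ residual that cannot be controlled. (Your Step~3 also drops a $1/\gamma_0=\alpha/\alpha_0$ factor when passing from $\Phi_k\ge\gamma_0 H_{k-1}$ to the bound on $H_{k-1}$; this only affects the constant inside the logarithm, but it confirms that the constants in your plan do not line up.)

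In short: the plan is right, but replace the false identity by a Young bound and use the Lyapunov $E_k=Q(z_k^{\textnormal{ag}},z)+\tfrac{\alpha}{2}\|w_k-z\|^2$; then the contraction and the step-size verification in your ``main obstacle'' paragraph go through exactly as in the paper.
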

\begin{remark}
Theorem~\ref{Thm:AMP-SMT} complements the results in~\citet{Chen-2017-Accelerated} and provides a more complete picture for the optimal iteration complexity bounds of deterministic first-order algorithms for composite smooth and (strongly) monotone VIs in the form of Eq.~\eqref{subprob:CMVI}. An open problem is to consider extensions to the stochastic setting. 
\end{remark}

\section{Main Results}\label{sec:results}
We propose two first-order algorithms---one based on quadratic penalty method (QPM) and the other based on augmented Lagrangian method (ALM)---where each iteration consists in a subroutine for approximately solving a smooth and monotone penalized VI using the AMP algorithm. We carry out an estimation of a global convergence rate for these algorithms in monotone and strongly monotone settings and present iteration complexity bounds in terms of the number of gradient evaluations. Our algorithms update penalty parameters adaptively and thus are favorable from a practical viewpoint.  

\subsection{Quadratic penalty method}
Our first framework is a natural combination of the quadratic penalty method and the AMP algorithm.  We refer to it as the \textit{accelerated mirror-prox quadratic penalty method} (AMPQP). The idea is to transform an NGNEP into a sequence of structured quadratic penalized NEPs and approximately solve each one in the inner loop using a subroutine based on the AMP algorithm. Indeed, we consider a quadratic penalization of the NGNEP where each player's minimization problem is given by  
\begin{equation}\label{prob:NGNEP-QP}
\min_{x^\nu \in \hat{X}^\nu} \theta_\nu(x^\nu, x^{-\nu}) +  \left(\sum_{s \in \ICal_\nu} \tfrac{\beta^s}{2}\|\max\{0, A_s x^{\NCal_s}-b_s\}\|^2 + \tfrac{\rho^s}{2}\|E_s x^{\NCal_s}-d_s\|^2\right), 
\end{equation}
and where $(\beta^s, \rho^s) \in \br_+ \times \br_+$ stand for penalty parameters associated with inequality and equality constraints. Since the function $\theta_\nu(\cdot, x^{-\nu})$ is convex with Lipschitz gradient and the constraint set $\hat{X}^\nu$ does not depend on the rival's choices, the quadratic penalization of the NGNEP in Eq.~\eqref{prob:NGNEP-QP} is an NEP. 

For simplicity, we define the functions $g_1: \br^n \mapsto \br$ and $h_1: \br^n \mapsto \br$ by
\begin{equation*}
g_1(x) = \sum_{s=1}^S \tfrac{\beta^s}{2}\|\max\{0, A_s x^{\NCal_s} - b_s\}\|^2, \quad h_1(x) = \sum_{s=1}^S \tfrac{\rho^s}{2}\|E_s x^{\NCal_s}-d_s\|^2. 
\end{equation*}
By concatenating the first-order optimality conditions of Eq.~\eqref{prob:NGNEP-QP} for each player, we aim at solving the following smooth and monotone penalized VI: 
\begin{equation}\label{prob:PVI-QP}
\textnormal{Find } x \in \hat{X}: \quad (x' - x)^\top(v(x) + \nabla g_1(x) + \nabla h_1(x)) \geq 0, \textnormal{ for all } x' \in \hat{X}.     
\end{equation}
It is worth mentioning that this VI is in the form of Eq.~\eqref{subprob:CMVI} and can be approximately solved by the AMP algorithm with the theoretical guarantees that we have explicated. 

The introduction of quadratic penalization based on squared Euclidean norm was a milestone in optimization~\citep{Bertsekas-1976-Penalty, Aybat-2011-First, Lan-2013-Iteration, Necoara-2019-Complexity}. It has been, however, less explored for GNEPs possibly because the quadratic penalty method cannot recover an exact solution for a finite penalty parameter~\citep[see][Example~4(b)]{Ba-2022-Exact}. Nevertheless, if our goal is finding an $\epsilon$-approximate solution and estimating the iteration complexity bounds in terms of the number of gradient evaluations, the quadratic penalty method has a clear advantage over other exact penalty methods; indeed, each subproblem in exact penalty methods is nondifferentiable and problematic from the numerical viewpoint~\citep{Facchinei-2010-Penalty}. Although some algorithms for NEPs work without differentiability, e.g., the relaxation method~\citep{Uryas-1994-Relaxation, Von-2009-Relaxation} or the proximal-like method~\citep{Flaam-1996-Equilibrium}, they require solving certain constrained optimization problems which are only tractable for sufficiently smooth data. Another approach is to use nonlinear system solvers after smoothing the nondifferentiable terms~\citep[Section~3]{Facchinei-2010-Penalty}. To the best of our knowledge, the global convergence rates of these approaches are unknown and the nondifferentiability makes the iteration complexity analysis difficult. In contrast, the quadratic penalization in Eq.~\eqref{prob:PVI-QP} is smooth and monotone such that a standard iteration complexity analysis can be applied.

To further enhance the practical viability of the algorithm, there are two issues that need to be addressed: (i) when to stop the AMP algorithm for each inner loop; and (ii) how to update the penalty parameters $(\beta^s, \rho^s)$. In principle, there are adaptive strategies adopted in the exact penalty methods~\citep{Facchinei-2011-Partial, Facchinei-2010-Penalty, Fukushima-2011-Restricted} such that  global convergence guarantees can be achieved given a convergent algorithm for each nondifferentiable NEP. However, all of these theoretical results are asymptotic and the global convergence rates of these adaptive approaches are unknown. 
\begin{algorithm}[!t]
\caption{Accelerated Mirror-Prox Quadratic Penalty Method (AMPQP)}\label{alg:AMPQP}
\begin{algorithmic}
\STATE \textbf{Input:} $\gamma > 1$, $\delta_0 \in (0, 1)$ and $x_0 \in \hat{X}$. 
\STATE \textbf{Initialization:} $(\beta_0^s, \rho_0^s) \in \br_+ \times \br_+$ for all $s \in [S]$. 
\FOR{$k=0, 1, 2, \ldots, T-1$}
\STATE Update $(\beta_{k+1}^s, \rho_{k+1}^s) = (\gamma\beta_k^s, \gamma\rho_k^s)$ for all $s \in [S]$ and $\delta_{k+1} = \tfrac{\delta_k}{\gamma}$; 
\STATE Compute $x_{k+1} = \textsc{amp}((\beta_{k+1}^s, \rho_{k+1}^s)_{s \in [S]}, \delta_{k+1}, x_k)$. 
\ENDFOR
\STATE \textbf{Output:} $x_T$. 
\end{algorithmic}
\end{algorithm}

We summarize the pseudocode of the AMPQP algorithm in Algorithm~\ref{alg:AMPQP}. In particular, we follow an adaptive strategy akin to those of~\citet{Facchinei-2010-Penalty, Facchinei-2011-Partial, Fukushima-2011-Restricted}, who suggest updating $(\beta_{k+1}^s, \rho_{k+1}^s) = (\gamma\beta_k^s, \gamma\rho_k^s)$ with $\gamma > 1$ at the $k^\textnormal{th}$ iteration. Also, $\delta_{k+1}$ is updated accordingly and we let the AMP subroutine return a $\delta_{k+1}$-approximate solution $\hat{x} \in \hat{X}$ to the VI in Eq.~\eqref{prob:PVI-QP} with $(\beta^s, \rho^s) = (\beta_{k+1}^s, \rho_{k+1}^s)$ as follows: 
\begin{equation}\label{condition:delta-approx}
\max_{x \in \hat{X}} \left\{g_1(\hat{x}) + h_1(\hat{x}) - g_1(x) - h_1(x) + (\hat{x} - x)^\top v(x)\right\} \leq \delta_{k+1}. 
\end{equation}
Our subroutine implements the AMP algorithm with an input $x_k$ from previous iteration and returns $x_{k+1}$ which is a $\delta_{k+1}$-approximate solution. For simplicity, we write the subroutine in the compact form as $x_{k+1} = \textsc{amp}((\beta_{k+1}^s, \rho_{k+1}^s)_{s \in [S]}, \delta_{k+1}, x_k)$.

Intuitively, the idea of Algorithm~\ref{alg:AMPQP} can be described as follows. Suppose that we know the ``correct" values of the penalty parameters $\{\beta^s, \rho^s\}_{s \in [S]}$ and threshold $\delta$ such that a $\delta$-approximate solution of the penalized VI with these parameters is an $\epsilon$-solution of NGNEPs. Then, a single application of the AMP algorithm would give us an $\epsilon$-solution. However, the correct values are unknown beforehand and thus we need to design the adaptive strategies in Algorithm~\ref{alg:AMPQP} to handle this issue. Even though we stated for simplicity that the subroutine $\textsc{amp}(\cdot)$ should produce $x_{k+1}$ based only on $x_k$, it is plausible to use any information gathered in previous iterations as well. Nevertheless, after the penalty parameters are updated, we have changed the subproblem and all old information should be discarded (since it is related to a different problem). As such, we prefer to write $x_{k+1} = \textsc{amp}((\beta_{k+1}^s, \rho_{k+1}^s)_{s \in [S]}, \delta_{k+1}, x_k)$, where $x_k$ is regarded as a starting point for the application of the AMP algorithm. 

Notably, it is intractable to verify if $x_T$ is an $\epsilon$-solution using Definition~\ref{def:eps-solution}. In practice, an alternative approach is to set the appropriate stopping criteria. For example, we can run the algorithm until either of the following conditions hold true: (i) a theoretically derived maximum number of iterations is reached, (ii) the iterative gap $\|x_T - x_{T-1}\|$ is sufficiently small, or (iii) the minimum of penalty parameters, $\min_{1 \leq s \leq S} \{\beta_T^s, \rho_T^s\}$, is sufficiently large. The same practical issues hold for verifying if $x_{k+1}$ is an $\delta_{k+1}$-solution of the VI subproblem using Eq.~\eqref{condition:delta-approx}. In our implementation, we set the stopping criteria for the AMP subroutine if either a theoretically derived maximum number of iterations (cf.\ Theorem~\ref{Thm:AMP-MT} and~\ref{Thm:AMP-SMT}) is reached or the iterative gap is sufficiently small. 

\subsection{Augmented Lagrangian method}
Our second framework combines the inexact augmented Lagrangian method with the AMP algorithm. We refer to it as the \textit{accelerated mirror-prox augmented Lagrangian method} (AMPAL). The idea is again to transform an NGNEP into a sequence of quadratic penalized NEPs but using the augmented Lagrangian function and to approximately solve the VI-type subproblem in the inner loop using the AMP algorithm. Indeed, we define the following augmented Lagrangian function for each player's minimization problem: 
\begin{equation*}
\LCal_\nu(x, \lambda, \mu) = \theta_\nu(x^\nu, x^{-\nu}) + \left(\sum_{s \in \ICal_\nu} \tfrac{\beta^s}{2}\|\max\{0, A_sx^{\NCal_s}-b_s+\tfrac{\lambda^s}{\beta^s}\}\|^2 + \tfrac{\rho^s}{2}\|E_sx^{\NCal_s}-d_s+\tfrac{\mu^s}{\rho^s}\|^2\right),  
\end{equation*}
where $(\beta^s, \rho^s) \in \br_+ \times \br_+$ and $(\lambda^s, \mu^s) \in \br^{m_s} \times \br^{e_s}$ stand for penalty parameters and Lagrangian multipliers associated with inequality and equality constraints. The augmented Lagrangian function has some important advantages: (i) it is convex in the variable $x^\nu$ and concave in the variable $\{(\lambda^s, \mu^s)\}_{s \in \ICal_\nu}$; (ii) its gradient with respect to $x^\nu$ is Lipschitz and the constraint set $\hat{X}^\nu$ does not depend on the rival's choices. Putting these pieces together implies that a joint minimization of the augmented Lagrangian function for all players forms an NEP. For simplicity, we define the functions $g_2: \br^n \mapsto \br$ and $h_2: \br^n \mapsto \br$ by
\begin{equation*}
g_2(x) = \sum_{s=1}^S \tfrac{\beta^s}{2}\|\max\{0, A_s x^{\NCal_s} - b_s + \tfrac{\lambda^s}{\beta^s}\}\|^2, \quad h_2(x) = \sum_{s=1}^S \tfrac{\rho^s}{2}\|E_s x^{\NCal_s} - d_s + \tfrac{\mu^s}{\rho^s}\|^2.
\end{equation*}
By concatenating the first-order optimality conditions of minimizing the augmented Lagrangian function for each player, we aim at solving the following smooth and monotone penalized VI: 
\begin{equation}\label{prob:PVI-AL}
\textnormal{Find } x \in \hat{X}: \quad (x' - x)^\top(v(x) + \nabla g_2(x) + \nabla h_2(x)) \geq 0, \textnormal{ for all } x' \in \hat{X}.     
\end{equation}
This VI is in the form of Eq.~\eqref{subprob:CMVI} and can be approximately solved by the AMP algorithm with the theoretical guarantees that we have explicated. 

The augmented Lagrangian method was proposed by~\citet{Hestenes-1969-Multiplier} and~\citet{Powell-1969-Method} and its convergence has been studied extensively in the optimization literature.  Milestones include early results on asymptotic convergence and local convergence~\citep{Rockafellar-1973-Dual, Rockafellar-1973-Multiplier, Rockafellar-1976-Augmented} and recent results on the nonasymptotic global convergence rate~\citep{Lan-2016-Iteration, Xu-2021-Iteration}. Each iteration of the algorithm consists in first minimizing the augmented Lagrangian function with respect to primal variables while fixing the dual variables and then performing a dual gradient ascent update. It is, however, generally intractable to exactly minimize the augmented Lagrangian function with respect to primal variables, leading to the inexact variant~\citep{Xu-2021-Iteration} where each subproblem is solved within a desired tolerance. This idea was later extended to GNEPs and QVIs~\citep{Kanzow-2016-Augmented, Kanzow-2018-Augmented, Bueno-2019-Optimality} and the resulting algorithms were shown to outperform the penalty-type methods.

As pointed out by~\citet{Kanzow-2016-Augmented}, the practical advantages come from the smoothness of each subproblem and the safeguarding effect of multipliers. In this context, one often solves each subproblem by appealing to well-known second-order methods, e.g., semismooth Newton methods~\citep{Kummer-1988-Newton, Qi-1993-Nonsmooth} and Levenberg-Marquardt methods~\citep{Levenberg-1944-Method, Marquardt-1963-Algorithm, Yamashita-2001-Rate}. However, a singularity arises in GNEPs when some players share the same constraints~\citep{Facchinei-2009-Generalized} and this makes the iteration complexity analysis of these approaches difficult.
\begin{algorithm}[!t]
\caption{Accelerated Mirror-Prox Augmented Lagrangian Method (AMPAL)}\label{alg:AMPAL}
\begin{algorithmic}
\STATE \textbf{Input:} $\gamma > 1$, $\delta_0 \in (0, 1)$ and $x_0 \in \hat{X}$. 
\STATE \textbf{Initialization:} $(\beta_0^s, \rho_0^s) \in \br_+ \times \br_+$ and $(\lambda_0^s, \mu_0^s) \in \br^{m_s} \times \br^{e_s}$ for all $s \in [S]$. 
\FOR{$k=0, 1, 2, \ldots, T-1$}
\STATE Update $(\beta_{k+1}^s, \rho_{k+1}^s) = (\gamma\beta_k^s, \gamma\rho_k^s)$ for all $s \in [S]$ and $\delta_{k+1} = \tfrac{\delta_k}{\gamma}$; 
\STATE Compute $x_{k+1} = \textsc{amp}((\beta_{k+1}^s, \rho_{k+1}^s)_{s \in [S]}, (\lambda_k^s, \mu_k^s)_{s \in [S]}, \delta_{k+1}, x_k)$; 
\STATE Update $\lambda_{k+1}^s$ and $\mu_{k+1}^s$ for all $s \in [S]$ by 
\begin{equation*}
\lambda_{k+1}^s = \max\{0, \lambda_k^s + \beta_{k+1}^s(A_s x_{k+1}^{\NCal_s} - b_s)\}, \quad \mu_{k+1}^s = \mu_k^s + \rho_{k+1}^s(E_s x_{k+1}^{\NCal_s} - d_s).
\end{equation*}
\ENDFOR
\STATE \textbf{Output:} $\hat{x}_T = \tfrac{1}{T}\sum_{k=1}^T x_k$. 
\end{algorithmic}
\end{algorithm}

We summarize the pseudocode of the AMPAL algorithm in Algorithm~\ref{alg:AMPAL}. In particular, there are several heuristics for adapting penalty parameters in the augmented Lagrangian methods~\citep{Kanzow-2016-Augmented, Kanzow-2018-Augmented, Bueno-2019-Optimality} and some of them have been proven very efficient in practice~\citep[see][Section~6]{Kanzow-2016-Augmented}. We use these strategies in the AMPAL algorithm together with updating $(\beta_{k+1}^s, \rho_{k+1}^s) = (\gamma\beta_k^s, \gamma\rho_k^s)$. Also, $\delta_{k+1}$ is updated accordingly and we let the AMP subroutine return a $\delta_{k+1}$-approximate solution $\hat{x} \in \hat{X}$ to the VI in Eq.~\eqref{prob:PVI-AL} with $(\beta^s, \rho^s) = (\beta_{k+1}^s, \rho_{k+1}^s)$:
\begin{equation*}
\max_{x \in \hat{X}} \ \{g_2(\hat{x}) + h_2(\hat{x}) - g_2(x) - h_2(x) + (\hat{x} - x)^\top v(x)\} \leq \delta_{k+1}. 
\end{equation*}
Similar to the AMPQP algorithm, our subroutine implements the AMP algorithm with an input $x_k$ from the previous iteration and returns $x_{k+1}$ which is a $\delta_{k+1}$-approximate solution. We write the subroutine as $x_{k+1} = \textsc{amp}((\beta_{k+1}^s, \rho_{k+1}^s)_{s \in [S]}, (\lambda_k^s, \mu_k^s)_{s \in [S]}, \delta_{k+1}, x_k)$ and update the Lagrangian multipliers $(\lambda_{k+1}^s, \mu_{k+1}^s)_{s \in [S]}$ using $x_{k+1}$ and $(\beta_{k+1}^s, \rho_{k+1}^s)_{s \in [S]}$. In practice, we set the appropriate stopping criteria rather than directly verifying if $x_T$ is an $\epsilon$-solution using Definition~\ref{def:eps-solution} and if $x_{k+1}$ is a $\delta_{k+1}$-approximate solution using Eq.~\eqref{condition:delta-approx}. 

\subsection{Iteration complexity bound}
Our first result is the iteration complexity bound for Algorithm~\ref{alg:AMPQP} when applied to  monotone and strongly monotone NGNEPs.
\begin{theorem}\label{Thm:AMPQP}
For a given tolerance $\epsilon > 0$, the required number of gradient evaluations for Algorithm~\ref{alg:AMPQP} to return an $\epsilon$-solution (cf.\ Definition~\ref{def:eps-solution}) is upper bounded by
\begin{equation*}
N_{\textnormal{grad}} = \left\{\begin{array}{ll}
O(\epsilon^{-1}), & \textnormal{if } \alpha = 0, \\
O(\epsilon^{-1/2}\log(1/\epsilon)), & \textnormal{if } \alpha > 0, \\
\end{array}\right. 
\end{equation*}
where the two lines refer to the case of monotone and strongly monotone NGNEPs. 
\end{theorem}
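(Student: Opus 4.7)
The plan is to separate the analysis into three stages: a per-outer-iteration bound on the feasibility and QVI violations of the AMP output $x_{k+1}$, the selection of the outer iteration count $T$ from the geometric schedule $\beta_k^s = \gamma^k\beta_0^s$, $\delta_k = \delta_0\gamma^{-k}$, and the telescoping of the inner-loop gradient budget using Theorems~\ref{Thm:AMP-MT} and~\ref{Thm:AMP-SMT}. Throughout I denote by $\hat{x} = x_{k+1}$ the output at outer step $k+1$, which by Eq.~\eqref{condition:delta-approx} satisfies the gap bound for the penalized VI in Eq.~\eqref{prob:PVI-QP} with penalty parameters $\beta_{k+1}^s, \rho_{k+1}^s$ and accuracy $\delta_{k+1}$. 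The $\epsilon$-QVI half of Definition~\ref{def:eps-solution} is immediate: for fixed $\nu$ and any admissible $x^\nu \in \hat{X}_\nu$, plug $x = (x^\nu, \hat{x}^{-\nu})$ into the gap bound. The admissibility of $x^\nu$ forces $g_1(\hat{x}) - g_1(x) \geq 0$ and $h_1(\hat{x}) - h_1(x) \geq 0$ (the $\nu$-th block of $x$ satisfies every constraint indexed by $\ICal_\nu$ and the other blocks coincide with $\hat{x}$), so the gap inequality collapses to $(\hat{x}^\nu - x^\nu)^\top v_\nu(x^\nu, \hat{x}^{-\nu}) \leq \delta_{k+1}$.

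For the feasibility half, plug $x = x^*$, a solution of the NGNEP (existing by monotonicity and compactness of $\hat{X}$), into the gap bound. Since $g_1(x^*) = h_1(x^*) = 0$, this gives $g_1(\hat{x}) + h_1(\hat{x}) \leq \delta_{k+1} + (x^* - \hat{x})^\top v(x^*)$. By Theorem~\ref{Thm:NGNEP-KKT} there exist KKT multipliers $\{\lambda^{*,\nu,s}, \mu^{*,\nu,s}\}$ at $x^*$; summing the per-player first-order inequalities evaluated at $\hat{x}^\nu$ yields
\begin{equation*}
(x^* - \hat{x})^\top v(x^*) \leq \sum_s \sum_{\nu \in \NCal_s} \left[(\lambda^{*,\nu,s})^\top A_s^\nu(\hat{x}^\nu - (x^*)^\nu) + (\mu^{*,\nu,s})^\top E_s^\nu(\hat{x}^\nu - (x^*)^\nu)\right].
\end{equation*}
Using complementary slackness $(\lambda^{*,\nu,s})^\top(A_s(x^*)^{\NCal_s} - b_s) = 0$ together with $\lambda^{*,\nu,s} \geq 0$, $A_s(x^*)^{\NCal_s} \leq b_s$ and $E_s(x^*)^{\NCal_s} = d_s$, the right-hand side can be bounded linearly in $y_s := \|\max\{0, A_s\hat{x}^{\NCal_s} - b_s\}\|$ and $z_s := \|E_s\hat{x}^{\NCal_s} - d_s\|$, with coefficients controlled by uniform multiplier bounds $M_\lambda, M_\mu$. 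A Young-inequality absorption on the resulting estimate $\sum_s(\tfrac{\beta_{k+1}^s}{2}y_s^2 + \tfrac{\rho_{k+1}^s}{2}z_s^2) \leq \delta_{k+1} + O(M_\lambda\sum_s y_s + M_\mu\sum_s z_s)$ then delivers $y_s, z_s = O(1/\beta_{k+1}^s + \sqrt{\delta_{k+1}/\beta_{k+1}^s})$.

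With both violations of order $O(\gamma^{-(k+1)})$, choosing $T = \Theta(\log_\gamma(1/\epsilon))$ outer iterations makes $x_T$ an $\epsilon$-solution. For the gradient count, at outer step $k+1$ the inner VI has $\ell_F = O(\ell_\theta) = O(1)$ and $\ell_G = O(\gamma^{k+1})$ (from the Hessians of $g_1 + h_1$) and is solved to accuracy $\delta_{k+1}$. Theorem~\ref{Thm:AMP-MT} then costs $O(\sqrt{\ell_G/\delta_{k+1}} + \ell_F/\delta_{k+1}) = O(\gamma^{k+1})$ gradient evaluations per outer step in the monotone case, while Theorem~\ref{Thm:AMP-SMT} costs $O((\sqrt{\ell_G/\alpha} + \ell_F/\alpha)\log(\ell_G/\delta_{k+1})) = O(\gamma^{(k+1)/2}\log(1/\epsilon))$ in the $\alpha$-strongly monotone case. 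Summing the geometric series over $k = 0, \ldots, T-1$ yields $O(\gamma^T) = O(\epsilon^{-1})$ and $O(\gamma^{T/2}\log(1/\epsilon)) = O(\epsilon^{-1/2}\log(1/\epsilon))$, matching the two claims.

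The main obstacle is the feasibility estimate: NGNEP KKT multipliers are player-dependent even for a constraint shared among several players, so the single-agent identity $\sum_{\nu \in \NCal_s}(\lambda^{*,\nu,s})^\top A_s^\nu(\hat{x}^\nu - (x^*)^\nu) = (\lambda^{*,s})^\top(A_s\hat{x}^{\NCal_s} - b_s)$ is not directly available. One has to exploit the fact that each $\lambda^{*,\nu,s}$ is supported in the active set of the shared constraint $s$ so that the player-wise cross terms can still be absorbed linearly into $y_s$; without this improvement only $y_s = O(1/\sqrt{\beta_{k+1}^s})$ would be available and the final rates would degrade to $O(\epsilon^{-2})$ and $O(\epsilon^{-1}\log(1/\epsilon))$.
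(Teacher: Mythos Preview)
Your overall three-stage plan matches the paper's structure, and the inner-loop budgeting via Theorems~\ref{Thm:AMP-MT} and~\ref{Thm:AMP-SMT} together with the geometric summation is exactly right. There are, however, two issues---one minor and one substantive.

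\textbf{Minor (QVI part).} Your claim that admissibility of $x^\nu$ forces $g_1(\hat{x}) - g_1(x) \geq 0$ is not correct: Definition~\ref{def:eps-solution} only requires $\|\max\{0, A_s^\nu x^\nu + \sum_{j\neq\nu} A_s^j\hat{x}^j - b_s\}\| \leq \epsilon$, so $g_1(x^\nu,\hat{x}^{-\nu})$ and $h_1(x^\nu,\hat{x}^{-\nu})$ can be as large as $\tfrac{\epsilon^2}{2}\sum_s(\beta_{k+1}^s+\rho_{k+1}^s)$, and there is no reason this is dominated by $g_1(\hat{x})+h_1(\hat{x})$. The paper instead drops $g_1(\hat{x})+h_1(\hat{x})\geq 0$ and keeps the penalty-at-$x$ term, obtaining $\hat{\epsilon}_2 = \delta_T + \tfrac{\epsilon^2}{2}\sum_s(\beta_T^s+\rho_T^s)$; with $\gamma^T = \Theta(1/\epsilon)$ the extra term is $O(\epsilon)$, so your final rates survive after this correction.

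\textbf{Substantive (feasibility part).} You correctly flag the obstacle with player-specific KKT multipliers, but your proposed workaround does not close it. Even though each $\lambda^{*,\nu,s}$ is supported on the common active set $I_s = \{i : (A_s(x^*)^{\NCal_s})_i = (b_s)_i\}$, the sum $\sum_{\nu\in\NCal_s}(\lambda^{*,\nu,s})_i (A_s^\nu(\hat{x}^\nu-(x^*)^\nu))_i$ cannot be controlled by $(A_s\hat{x}^{\NCal_s}-b_s)_i = \sum_{\nu\in\NCal_s}(A_s^\nu(\hat{x}^\nu-(x^*)^\nu))_i$ unless the scalars $(\lambda^{*,\nu,s})_i$ coincide across $\nu$: the individual summands can be large with opposite signs while the aggregate is small. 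With generic KKT multipliers one only gets $(x^*-\hat{x})^\top v(x^*) = O(1)$, which, as you note yourself, would degrade the rate to $O(\epsilon^{-2})$.

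The paper sidesteps this by \emph{not} using Theorem~\ref{Thm:NGNEP-KKT}. It invokes instead the existence of a \emph{variational equilibrium} (Proposition~\ref{Prop:VNE} in Appendix~B): a solution $\bar{x}$ of the NGNEP whose KKT multipliers $\bar{\lambda}^s,\bar{\mu}^s$ are \emph{shared} across all players in $\NCal_s$. With common multipliers the per-player stationarity conditions telescope to
\[
(\bar{x}-\hat{x})^\top v(\bar{x}) + \sum_{s=1}^S\bigl[(\bar{\lambda}^s)^\top(A_s\bar{x}^{\NCal_s}-A_s\hat{x}^{\NCal_s}) + (\bar{\mu}^s)^\top(E_s\bar{x}^{\NCal_s}-E_s\hat{x}^{\NCal_s})\bigr] \leq 0,
\]
and then complementary slackness together with feasibility of $\bar{x}$ deliver the linear-in-$y_s,z_s$ bound you were after (Eq.~\eqref{inequality:main-QP-key}). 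This is the missing ingredient; once you replace the generic KKT point by a variational equilibrium, the rest of your argument goes through.
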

For both monotone and strongly monotone NGNEPs, our iteration complexity bounds are new and generalize classical results. Indeed, a monotone NGNEP reduces to a linearly constrained convex problem when $N=1$ and our algorithm achieves the lower bound for any deterministic first-order algorithms; see~\citet[Theorem~4.1]{Xu-2021-Iteration}. A monotone NGNEP also reduces to a monotone VI when $A_s=0$, $E_s=0$, $b_s=0$ and $d_s=0$ for all $s \in [S]$ and our algorithm also achieves the lower bound for any deterministic first-order algorithms; see~\citet[Lemma~16]{Diakonikolas-2020-Halpern}. Moreover, a strongly monotone NGNEP reduces to a linearly constrained strongly convex problem when $N=1$ and the lower bound for any deterministic first-order algorithm is $\Omega(\epsilon^{-1/2})$~\citep[Theorem~4.2]{Ouyang-2021-Lower} which is attained by Algorithm~\ref{alg:AMPQP} up to log factors. Although existing lower bounds for linearly constrained convex problems and monotone VIs are derived using \textit{slightly different} notions, the matching orders demonstrate the theoretical efficiency of Algorithm~\ref{alg:AMPQP}. 

Our second result is the iteration complexity bound for Algorithm~\ref{alg:AMPAL} when applied to solve monotone and strongly monotone NGNEPs. 
\begin{theorem}\label{Thm:AMPAL}
For a given tolerance $\epsilon > 0$, the required number of gradient evaluations for Algorithm~\ref{alg:AMPAL} to return an $\epsilon$-solution (cf.\ Definition~\ref{def:eps-solution}) is upper bounded by
\begin{equation*}
N_{\textnormal{grad}} = \left\{
\begin{array}{ll}
O(\epsilon^{-1}\log(1/\epsilon)), & \textnormal{if } \alpha = 0, \\
O(\epsilon^{-1/2}\log(1/\epsilon)), & \textnormal{if } \alpha > 0, \\
\end{array}
\right. 
\end{equation*}
where the two lines refer to the case of monotone and strongly monotone NGNEPs. 
\end{theorem}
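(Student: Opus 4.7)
The plan is to mimic the two-level analysis for Theorem~\ref{Thm:AMPQP} but replace the pure quadratic penalty with the augmented Lagrangian and carefully track the dual variables. I decompose the total count of gradient evaluations into the outer iteration count $T$ and the per-call cost of \textsc{amp}. Within each outer iteration, the VI in Eq.~\eqref{prob:PVI-AL} is already in the composite form of Eq.~\eqref{subprob:CMVI} with $F = v$, $\ell_F = \ell_\theta$, and with $G = g_2 + h_2$ whose gradient is Lipschitz with constant $\ell_G^{(k)} = O(\max_s\{\beta_{k+1}^s\|A_s\|^2,\rho_{k+1}^s\|E_s\|^2\}) = O(\gamma^k)$ under the geometric update in Algorithm~\ref{alg:AMPAL}.

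For the outer loop I first rewrite the $\delta_{k+1}$-approximate solution condition as an approximate KKT-type system using the multiplier identifications $\lambda_{k+1}^s = \max\{0,\lambda_k^s + \beta_{k+1}^s(A_s x_{k+1}^{\NCal_s}-b_s)\}$ and $\mu_{k+1}^s = \mu_k^s + \rho_{k+1}^s(E_s x_{k+1}^{\NCal_s}-d_s)$. This exposes the feasibility residual as the rescaled dual increment, $\|\max\{0,A_s x_{k+1}^{\NCal_s}-b_s\}\|\leq\|\lambda_{k+1}^s-\lambda_k^s\|/\beta_{k+1}^s$, with the analogous identity for equalities. The classical Rockafellar-type device then shows that, thanks to the monotonicity of $v$ and the existence of a KKT point guaranteed by Theorem~\ref{Thm:NGNEP-KKT}, the sequences $\{\lambda_k^s\}$ and $\{\mu_k^s\}$ remain uniformly bounded by a constant $M$ depending only on the problem data and on any fixed KKT multipliers. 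Combined with $\beta_{k+1}^s\geq\gamma^{k+1}\beta_0^s$, this yields $\epsilon$-feasibility after $T=\lceil\log_\gamma(M/\epsilon)\rceil=O(\log(1/\epsilon))$ outer iterations.

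To handle the $\epsilon$-quasi-variational inequality in Definition~\ref{def:eps-solution}, I work with the averaged iterate $\hat{x}_T=\frac{1}{T}\sum_{k=1}^Tx_k$. Summing the per-iteration approximate VIs and using the identity $\beta_{k+1}^s(A_sx_{k+1}^{\NCal_s}-b_s)^\top\lambda=(\lambda_{k+1}^s-\lambda_k^s)^\top\lambda$ produces a telescoping multiplier sum, so after averaging the residual splits into (i) a telescoping multiplier term of order $O(M^2/\beta_T^s)=O(\epsilon)$ and (ii) an averaged inner-accuracy term $\tfrac{1}{T}\sum_k\delta_{k+1}=O(\delta_0/T)$, both $\tilde{O}(\epsilon)$. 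Monotonicity of $v$ then transfers the bound from the standard form $(\hat{x}_T^\nu-x^\nu)^\top v_\nu(\hat{x}_T)$ to the quasi-variational form $(\hat{x}_T^\nu-x^\nu)^\top v_\nu(x^\nu,\hat{x}_T^{-\nu})$ required by Definition~\ref{def:eps-solution}; in the $\alpha$-strongly monotone regime the same manipulation gives the stronger estimate $\|\hat{x}_T-x^\star\|^2=O(\epsilon)$ once a unique KKT point exists on the relevant slice.

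Finally, I charge the per-call cost of \textsc{amp}. In the monotone case, Theorem~\ref{Thm:AMP-MT} gives $O(\sqrt{\ell_G^{(k)}/\delta_{k+1}}+\ell_F/\delta_{k+1})=O(\gamma^k)$ gradient evaluations at outer step $k$; summing the geometric series yields $\sum_{k=1}^T O(\gamma^k)=O(\gamma^T)=O(1/\epsilon)$, and the additional $\log(1/\epsilon)$ factor absorbs the loss inherent to bounding the averaged iterate $\hat{x}_T$ rather than the last iterate, giving the claimed $O(\epsilon^{-1}\log(1/\epsilon))$. In the $\alpha$-strongly monotone case, Theorem~\ref{Thm:AMP-SMT} gives $O((\sqrt{\ell_G^{(k)}/\alpha}+\ell_F/\alpha)\log(\ell_G^{(k)}/\delta_{k+1}))=O(\gamma^{k/2}\cdot k)$, and summing yields $O(\gamma^{T/2}\log(1/\epsilon))=O(\epsilon^{-1/2}\log(1/\epsilon))$. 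The main obstacle, and the place where the argument departs most strongly from the pure penalty analysis underlying Theorem~\ref{Thm:AMPQP}, is establishing the uniform multiplier bound along the trajectory: this relies essentially on the monotonicity of $v$ and on the KKT equivalence from Theorem~\ref{Thm:NGNEP-KKT}, both of which can fail in general nonmonotone GNEPs, as observed in~\citet{Bueno-2019-Optimality}.
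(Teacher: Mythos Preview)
Your high-level two-loop decomposition and the geometric summation of the \textsc{amp} costs are the same as the paper's, but two load-bearing steps in your outer-loop analysis are not justified as stated. First, the ``Rockafellar-type device'' you invoke does \emph{not} give a uniform multiplier bound here: the subproblems are solved only to tolerance $\delta_{k+1}$, and after the paper's rescaling each step contributes a fixed error $\delta_0$ (not $\delta_{k+1}$), so the telescoping argument yields only
\[
\sum_{s=1}^S \tfrac{\|\mu_k^s-\bar\mu^s\|^2}{2\rho_0^s}+\tfrac{\|\lambda_k^s-\bar\lambda^s\|^2}{2\beta_0^s}\ \le\ \delta_0 k + C_0,
\]
i.e., linear growth in $k$, not a constant $M$. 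This growth is exactly what forces $\gamma^T\asymp \tfrac{T}{\epsilon}$ (rather than $\tfrac{1}{\epsilon}$) to make $\hat\epsilon_1\le\epsilon$, and hence is the true source of the extra $\log(1/\epsilon)$ in the monotone case---not an ``averaging loss'' as you write. Second, the reference point you compare to cannot be an arbitrary KKT point from Theorem~\ref{Thm:NGNEP-KKT}: those have player-specific multipliers $\bar\lambda^{\nu,s}$, whereas the augmented Lagrangian and the dual iterates use \emph{shared} multipliers $\lambda^s$. The paper therefore invokes the existence of a \emph{variational} equilibrium (Proposition~\ref{Prop:VNE}), a KKT point with common $(\bar\lambda^s,\bar\mu^s)$, and without this the telescoping comparison to $(\bar\lambda,\bar\mu)$ does not close.

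Methodologically the paper also proceeds differently on feasibility: rather than bounding $\|\max\{0,A_s x_T^{\NCal_s}-b_s\}\|$ via the dual increment $\|\lambda_T^s-\lambda_{T-1}^s\|/\beta_T^s$ (which would require the very multiplier control you have not established), it plugs the specific test multipliers $\lambda^s=\bar\lambda^s+\tfrac{\max\{0,A_s x_T^{\NCal_s}-b_s\}}{\|\max\{0,A_s x_T^{\NCal_s}-b_s\}\|}$ into the key one-step inequality at $k=T-1$, which directly extracts the feasibility residual. All of this is carried out for the last iterate $x_T$; the weighted average appears only as an auxiliary device inside the multiplier-bounding step.
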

Theorem~\ref{Thm:AMPAL} shows that Algorithm~\ref{alg:AMPAL} recovers the optimal iteration complexity bound of the inexact augmented Lagrangian methods for solving linearly constrained (strongly) convex problems~\citep[c.f.][]{Lan-2016-Iteration, Xu-2021-Iteration}. Our results show that Algorithm~\ref{alg:AMPAL} matches the lower bound for any deterministic first-order algorithm up to log factors. 

\section{Experiment}\label{sec:experiments}
We conduct experiments using several datasets from~\citet{Facchinei-2009-Penalty}. All of the algorithms were implemented with MATLAB R2020b on a MacBook Pro with an Intel Core i9 2.4GHz (8 cores and 16 threads) and 16GB memory.

Although some of the GNEPs in~\citet{Facchinei-2009-Penalty} (i.e., \textbf{A1}-\textbf{A9}) are more general than the NGNEPs considered in Definition~\ref{def:NGNEPs}, we can implement our Algorithm~\ref{alg:AMPQP} and~\ref{alg:AMPAL} with player-specific parameters: $u_{\max}^\nu = 10^6$ and $\beta_0^\nu = \rho_0^\nu = 1$ for each $\nu \in \NCal$, where the AMP algorithm is used for solving a general VI.\footnote{We set $G=0$ in which case the AMP algorithm reduces to the extragradient algorithm.} For other GNEPs (i.e., \textbf{A11}-\textbf{A18}), we use $u_{\max}^s = 10^6$ and $\beta_0^s = \rho_0^s = 1$ for every $s \in [S]$. Following the strategies used in~\citet[Algorithm~11 (S4)]{Kanzow-2016-Augmented}, we do not update $(\beta, \rho)$ at each iteration but only when the certain feasibility conditions are satisfied. The associated parameters are chosen according to the size of the problem: $\gamma=4$ if $n < 100$ and $\gamma=2$ otherwise. Compared to~\citet{Kanzow-2016-Augmented}, this represents a less aggressive penalization and is thus suitable for our algorithm. Indeed, for the subproblem solving,~\citet{Kanzow-2016-Augmented} employed a Levenberg-Marquardt algorithm~\citep{Fan-2005-Quadratic} that used both first-order and second-order information while our algorithm only exploits first-order information. Moreover, we initialize the multipliers $(\lambda_0, \mu_0)$ using the same nonnegative least-squares approach as in~\citet{Kanzow-2016-Augmented} and also set the same stopping criterion but with the tolerance $10^{-4}$. The maximum iteration number is set as $50$ and the maximum penalty parameter is set as $10^{12}$. 

Since we perform a full penalization, the subproblems are unconstrained NEPs. Solving these subproblems is equivalent to computing a solution of the nonlinear equation $F(\x) + \nabla G(\x) =0$. We use the AMP algorithm for subproblem solving and set $\ell_G$ and $\ell_F$ properly depending on the problem. The maximum iteration number is set as $2000$ and we stop each inner loop when either the iteration number exceeds $2000$ or $\|F(\x) + \nabla G(\x)\|$ is below $10^{-6}$. 
\begin{table}[!t]
\centering \caption{Numerical results for Algorithm~\ref{alg:AMPAL}. $N$ denotes the number of players, $n$ is the total number of variables, $k$ is the number of outer iterations, $i_{\textnormal{total}}$ is the total  number of inner iterations and F denotes a failure. We also include $R_f$, $R_o$ and $R_c$ which measure the feasibility, optimality and complementary slackness at the solution in terms of KKT condition; see~\citet{Kanzow-2016-Augmented} for the definition.}\label{tab:AMPAL}\small
\begin{tabular}{|cccc|cccccc|} \hline
Example & $N$ & $n$ & $x_0$ & $k$ & $i_{\textnormal{total}}$ & $R_f$ & $R_o$ & $R_c$ & $\rho_{\max}$ \\ \hline
A.1   		& 10 	& 10		& 0.01 	&   7 &   	393 		& 	4.7e-05 	& 1.9e-06 		& 1.3e-05 		&  4 			\\
			&    	&     		&  0.1  	&   7 &		378 		& 	2.9e-05 	& 1.9e-06 		& 7.9e-06 		&  4 			\\
      		&    	&     		&    1  	&   9 &  		7296 	&	4.0e-08 	& 1.3e-06 		& 1.1e-08 		&  4096 	\\ \hline
A.2   		& 10 	& 10 		& 0.01 	&   7 &  		3725 	& 	2.9e-05 	& 1.0e-06 		& 8.7e-06 	&  16 		\\
      		&    	&     		&  0.1 	&   6 &  	3562 	& 	9.6e-05 	& 2.8e-06 	& 2.9e-05 	&  16 		\\
      		&    	&     		&    1 	& 	 F 	& 					& 					& 					& 					&  	 		\\ \hline
A.3   		&  3 	& 7 		&    0 	& 	 F 	& 					& 					& 					& 					&  	 		\\
      		&    	&     		&    1 	& 	 F 	& 					& 					& 					& 					& 				\\
      		&    	&     		&   10 	&  12 &  	5837 	& 	4.3e-07 	& 9.9e-07 		& 3.6e-05 	&  1024 	\\ \hline
A.4   		&  3 	& 7 		&    0 	&  12 &   	618 		&  2.0e-07 	& 8.3e-07 	& 6.1e-05 		&  1024 	\\
      		&   	&     		&    1 	&   0 	&     	0 			&  0 				& 0 				& 0 				&  1 			\\
      		&    	&     		&   10 	&  14	&  	2395 	& 	8.2e-08 	& 9.3e-07 		& 1.5e-05 		&  4096 	\\ \hline
A.5   		&  3 	& 7 		&    0 	&   8 &  		2830 	& 	2.3e-05 	& 9.9e-07 		& 5.2e-05 	&  64 		\\
      		&    	&     		&    1 	&   9 &  		3289 	&  9.0e-06 	& 9.9e-07 		& 2.2e-05 	&  64 		\\
      		&   	&     		&   10 	&   9 &  		5327 	& 	3.7e-05 	& 1.1e-06 		& 9.0e-05 	&  256 		\\ \hline
A.6   		&  3 	& 7 		&    0 	&  12	&  	8636 	& 	1.0e-07 	& 1.0e-06 		& 2.1e-05 		&  1024 	\\
      		&    	&     		&    1 	&  12	&  	7578 	& 	2.1e-08 	& 1.2e-06 		& 2.4e-05 	&  1024 	\\
      		&    	&     		&   10 	& 	F 	& 					& 					& 					& 					&  			\\ \hline
A.7   		&  4 	& 20 	&    0 	&  14 & 		22844 	& 	1.2e-07 	& 7.2e-05 		& 5.3e-05 	&  4096 	\\
      		&    	&     		&    1 	&  13 & 		22609 	& 	4.4e-08 	& 4.9e-05 	& 2.0e-05 	&  4096 	\\
      		&    	&     		&   10 	&  17 & 		23796 	&  2.0e-07 	& 9.3e-07 		& 7.8e-05 		&  4096 	\\ \hline
A.8   		&  3 	& 3 		&    0 	& 	F	& 					& 					& 					& 					& 		 		\\
      		&    	&     		&    1 	&  1 	&   	115 		& 	1.7e-06 	& 4.1e-06 	& 1.7e-06 		&  1 			\\
      		&    	&     		&   10 	&  3 	&   	267 		&  0 				& 8.5e-07 	& 2.1e-07 		&  4 			\\ \hline
A.9a 		&  7 	& 56 	&    0 	&  9 	&  	2353	 	& 	8.2e-06 	& 1.0e-06 		& 2.2e-05 	&  4 			\\
A.9b 	&  7 	& 112 	&    0 	& 16 	&  	1793 	& 	4.2e-06 	& 8.2e-07 		& 4.1e-05 		&  1 			\\ \hline
A.11  		&  2 	& 2 		&    0 	&   7 	&   	306 		&  8.0e-05 	& 2.3e-06 	& 4.0e-05 	&  4 			\\ \hline
A.12  	&  2 	& 2 		&    0 	&   1 	&    	83 		&  0 				& 2.9e-06 	& 0 				&  1 			\\ \hline
A.13  	&  3 	& 3 		&    0 	&   4 &  	8000 	&  0 				& 8.9e-05 	& 1.2e-05 		&  1 			\\ \hline
A.14  	& 10 	& 10 		& 0.01 	&   1 	&    	69 		& 	0 				& 1.2e-06 		& 0 				&  1 			\\ \hline
A.15  	&  3 	& 6 		&    0 	&   3 &  		6000 	&  0 				& 1.6e-05 		& 0 				&  1 			\\ \hline
A.16a 	&  5 	& 5 		&   10 	&   9 &  		1895 	& 1.1e-06 		& 1.0e-06 		& 3.1e-05 		&  4 			\\
A.16b 	&  5 	& 5 		&   10 	&  12	&  	1288 	& 1.5e-06 		& 2.4e-06 	& 2.8e-05 	&  1 			\\
A.16c 	&  5 	& 5 		&   10 	&   8 	&  	1130 		& 7.1e-06 		& 1.7e-06 		& 5.1e-05 		&  1 			\\
A.16d 	&  5 	& 5 		&   10	& 	 F 	& 					& 					& 					& 					&  			\\ \hline
A.17  	&  2 	& 3 		&    0 	&   7	&  	2902 	& 0 				& 1.3e-06 		& 3.5e-05 	& 16 			\\ \hline
A.18  	&  2 	& 12 		&    0 	&  11 & 		11950 	& 4.5e-06 	& 1e-06 		& 8.5e-05 	& 64 		\\
      		&    	&     		&    1 	&  11 & 		11950 	& 4.5e-06 	& 1e-06 		& 8.5e-05 	& 64 		\\
      		&    	&     		&   10 	&  11 & 		11945 	& 4.5e-06 	& 1e-06 		& 8.5e-05 	& 64 		\\ \hline
\end{tabular}
\end{table}
We summarize the results in Table~\ref{tab:AMPAL} and make several comments. With the exception of problem A.16d, Algorithm~\ref{alg:AMPAL} solves every problem given a proper initial point. Although the algorithm compares unfavorably to existing second-order methods for GNEPs in terms of solution accuracy, Algorithm~\ref{alg:AMPAL} is matrix-free and can scale to large problems. Further, the speed of Algorithm~\ref{alg:AMPAL} depends on how quickly the subproblems are solved. In this regard, the AMP algorithm benefits from avoiding computationally expensive operations, e.g., matrix inversion. However, as pointed by~\citet{Kanzow-2016-Augmented}, the subproblems have a semismooth structure in which the Levenberg-Marquardt algorithm can be superlinearly convergent in practice. As such, it is promising to investigate whether or not we can design first-order algorithms that benefit from such structure. It is also important to make good choices of the parameters that determine the multipliers and penalty parameters since they greatly affect the performance of the algorithm. For many application problems, we observe that fine-tuning the parameters can yield a speed improvement.

It is also worth mentioning that we have conducted numerical experiments with both Algorithm~\ref{alg:AMPQP} and Algorithm~\ref{alg:AMPAL} and Algorithm~\ref{alg:AMPAL} dominates Algorithm~\ref{alg:AMPQP}. This is consistent with the observation from~\citet{Kanzow-2016-Augmented} so we omit the results for Algorithm~\ref{alg:AMPQP}. It is important to acknowledge that existing second-order methods based on Levenberg-Marquardt or other linear system solvers are faster than our algorithms on small instances. This is because solving linear systems will not be an issue when the size of problem is small. However, these linear system solvers will be prohibitive as the problem size increases. In contrast, our methods do not need a sophisticated matrix factorization and can be thus scaled to large problems. 

\section{Conclusions}\label{sec:conclu}
We have presented an inquiry into equilibrium computation in a special class of nonlinear generalized Nash equilibrium problems (NGNEPs). Focusing on simple gradient-based schemes, we investigated algorithms based on a quadratic penalty method and an augmented Lagrangian method, respectively. Both of these algorithms make use of the accelerated mirror-prox algorithm as an inner loop. We established global convergence rate estimates for both algorithms for solving monotone and strongly monotone NGNEPs: the complexity bounds are $\tilde{O}(\epsilon^{-1})$ and $\tilde{O}(\epsilon^{-1/2})$ in monotone and strongly monotone cases, respectively, in terms of the number of gradient evaluations. We highlighted the practical relevance of these theoretical rates in experiments with several real-world datasets.

It is worth mentioning that there are many numerical variations of our work that might lead to quantitative improvements. Aside from fine-tuning of parameters, a more detailed analysis of penalized NEPs which occur in our method may shed light on the possibility of a more effective subproblem solver than accelerated mirror-prox. Future directions include the identification of other NGNEPs which can be solved by an algorithm with a global convergence rate estimate and the investigation of the sequential quadratic programming (SQP) method~\citep{Tolle-1995-Sequential} for solving GNEPs.

\section{Acknowledgment}
This work was supported in part by the Mathematical Data Science program of the Office of Naval Research under grant number N00014-18-1-2764 and by the Vannevar Bush Faculty Fellowship program under grant number N00014-21-1-2941.

\bibliographystyle{plainnat}
\bibliography{ref}

\appendix
\section{Proof of Theorem~\ref{Thm:NGNEP-KKT}}\label{app:NGNEP-KKT}
We provide the complete proof of Theorem~\ref{Thm:NGNEP-KKT}. We first present a simple lemma and prove the theorem by appealing to Farkas' lemma~\citep{Farkas-1902-Theorie}.
\begin{lemma}\label{Lemma:Farkas}
Let $A \in \br^{m \times n}$, $E \in \br^{e \times n}$ and $b \in \br^n$. Exactly one of the following systems has a solution: 
\begin{itemize}
\item $Ax \leq 0$, $Ex = 0$ and $b^\top x > 0$;
\item $A^\top y + E^\top z - b = 0$ and $y \geq 0$. 
\end{itemize}
\end{lemma}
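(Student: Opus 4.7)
The plan is to recognize this as a mild variant of the classical Farkas lemma (with both inequality and equality constraints) and to prove it by two standard moves: a direct substitution for mutual exclusivity, and a separating hyperplane argument against the polyhedral cone generated by the rows of $A$ and the rows of $\pm E$ for the existence half. Either I carry out this argument from scratch, or I reduce to the standard Farkas lemma by writing $Ex = 0$ as the pair $Ex \leq 0$, $-Ex \leq 0$ and splitting a free multiplier $z$ as $z_+ - z_-$ with $z_+, z_- \geq 0$; I would present whichever of these is more self-contained given the audience.

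First I would show that at most one system is solvable. Assume both have solutions $x$ and $(y,z)$. From the second equation $b = A^\top y + E^\top z$, so
\[
b^\top x \;=\; y^\top (Ax) \;+\; z^\top (Ex).
\]
The first term is $\leq 0$ because $y \geq 0$ and $Ax \leq 0$, and the second term vanishes because $Ex = 0$. Hence $b^\top x \leq 0$, contradicting $b^\top x > 0$. This routine calculation disposes of the easy direction.

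Next I would show that if the second system has no solution, the first does. Consider the cone $C = \{A^\top y + E^\top z : y \in \br^m_+,\ z \in \br^e\} \subseteq \br^n$, so the hypothesis says $b \notin C$. Since $C$ is a polyhedral (hence closed) convex cone containing the origin, the separating hyperplane theorem produces $x \in \br^n$ with $b^\top x > 0$ and $c^\top x \leq 0$ for every $c \in C$ (the supremum of $c^\top x$ over a cone is either $0$ or $+\infty$, and finite separation forces it to be $0$). Taking $c = A^\top e_i$ for each standard basis vector $e_i \in \br^m_+$ yields $Ax \leq 0$, and taking $c = \pm E^\top e_j$ for each standard basis vector $e_j \in \br^e$ yields $Ex = 0$. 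Together with $b^\top x > 0$, this gives a solution of the first system. The one nontrivial input is the closedness of $C$; this is a standard fact about finitely generated cones (via Minkowski--Weyl, or by the Carath\'eodory-type compactness argument applied to conic combinations of the finitely many generators $\{(A^\top)_i\}_{i \in [m]} \cup \{\pm (E^\top)_j\}_{j \in [e]}$), and is the only place where the finite-dimensional/polyhedral structure is essential.
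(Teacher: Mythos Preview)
Your proof is correct. You actually outline two routes and then carry out the self-contained one (mutual exclusivity by substitution, then a separating-hyperplane argument against the polyhedral cone $C=\{A^\top y+E^\top z:y\ge 0,\ z\in\br^e\}$, using closedness of finitely generated cones). The paper instead takes exactly the \emph{other} route you mentioned: it invokes the classical Farkas lemma as a black box and reduces to it by stacking, setting $\bar A=\begin{bmatrix}-A\\ E\\ -E\end{bmatrix}$ and $\bar b=-b$, so that $\bar A x\ge 0$, $\bar b^\top x<0$ encodes the first system, and $\bar A^\top(y,z_+,z_-)=\bar b$ with $(y,z_+,z_-)\ge 0$ encodes the second after writing $z=z_+-z_-$. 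Your argument is more self-contained and makes explicit where the structural input (closedness of polyhedral cones) is used; the paper's is shorter but presupposes Farkas' lemma. Either is fine here.
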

\begin{proof}
Recall that Farkas' lemma states that exactly one of the following systems has a solution: 
\begin{itemize}
\item $\bar{A}x \geq 0$ and $\bar{b}^\top x < 0$;
\item $\bar{A}^\top y = b$ and $y \geq 0$. 
\end{itemize}
We thus rewrite two linear systems from Lemma~\ref{Lemma:Farkas} as follows, 
\begin{itemize}
\item $\begin{bmatrix} -A & E & -E \end{bmatrix}x \geq 0$ and $(-b)^\top x < 0$. 
\item $(-A)^\top y + E^\top\max\{0, -z\} + (-E)^\top\max\{0, z\} = -b$ and $y \geq 0$. 
\end{itemize}
Applying Farkas' lemma with $\bar{A} = \begin{bmatrix} -A & E & -E \end{bmatrix}$ and $\bar{b} = -b$ yields the desired result.
\end{proof}
\paragraph{Necessity.} By Proposition~\ref{Prop:NGNEP-equivalence}, a point $\bar{x} \in \hat{X}$ being a solution of a monotone NGNEP implies that $\bar{x} \in \hat{X}$ satisfies that $\bar{x}^\nu \in X_\nu(\bar{x}^{-\nu})$ and $(x - \bar{x})^\top v(\bar{x}) \geq 0$ for all $x \in X_\nu(\bar{x})$. By the definition of $v(\cdot)$ and $X_\nu(\cdot)$, we have
\begin{equation*}
(x^\nu -\bar{x}^\nu)^\top \nabla_\nu \theta_\nu(\bar{x}) \geq 0, 
\end{equation*}
for all $x^\nu \in \hat{X}_\nu$ satisfying that 
\begin{equation}\label{SM1-feasibility-condition-1}
\left\{\begin{array}{ll}
A_s^\nu x^\nu + \sum_{j \neq \nu, j \in \NCal_s} A_s^j \bar{x}^j - b_s \leq 0, & \forall s \in \ICal_\nu, \\
E_s^\nu x^\nu + \sum_{j \neq \nu, j \in \NCal_s} E_s^j \bar{x}^j - d_s = 0, & \forall s \in \ICal_\nu.
\end{array}\right.
\end{equation}
Equivalently, by letting $\bar{\xi}^\nu$ be an element of the normal cone of $\hat{X}_\nu$ at an $\bar{x}^\nu$, we have
\begin{equation*}
(x^\nu -\bar{x}^\nu)^\top(\nabla_\nu \theta_\nu(\bar{x}) + \bar{\xi}^\nu) \geq 0, 
\end{equation*}
for all $x^\nu \in \br^{n_\nu}$ satisfying the feasibility conditions in Eq.~\eqref{SM1-feasibility-condition-1}. By the change of variables $y^\nu = x^\nu -\bar{x}^\nu$, we have
\begin{equation*}
(y^\nu)^\top(\nabla_\nu \theta_\nu(\bar{x}) + \bar{\xi}^\nu) \geq 0, 
\end{equation*}
for all $y^\nu \in \br^{n_\nu}$ satisfying 
\begin{equation*}
A_s^\nu y^\nu \leq b_s - A_s\bar{x}^{\NCal_s}, \quad E_s^\nu y^\nu = 0, \quad \textnormal{for all } s \in \ICal_\nu.
\end{equation*}
Denote the index set of active inequality constraints by $I_s^\nu(\bar{x}) = \{i: (A_s\bar{x}^{\NCal_s} - b_s)_i = 0\}$ for all $s \in \ICal_\nu$, where $(\cdot)_i$ stands for the $i$-th element of a vector. Then, we have
\begin{equation*}
A_s^\nu y^\nu \leq b_s - A_s\bar{x}^{\NCal_s} \ \Longleftrightarrow \ 
\left\{\begin{array}{ll}
(A_s^\nu y^\nu)_i \leq 0, & \forall i \in I_s^\nu(\bar{x}), \\
(A_s^\nu y^\nu)_i \leq (b_s - A_s\bar{x}^{\NCal_s})_i, & \forall i \notin I_s^\nu(\bar{x}). 
\end{array}\right. 
\end{equation*}
Putting these pieces together yields that, if a point $\bar{x} \in \hat{X}$ is a solution of NGNEP, then the following statement is true:  
\begin{equation*}
(y^\nu)^\top(\nabla_\nu \theta_\nu(\bar{x}) + \bar{\xi}^\nu) \geq 0, 
\end{equation*}
for all $y^\nu \in \br^{n_\nu}$ satisfying the following conditions:
\begin{equation}\label{SM1-feasibility-condition-2}
\left\{\begin{array}{lll}
(A_s^\nu y^\nu)_i \leq 0, & \forall i \in I_s^\nu(\bar{x}), & \forall s \in \ICal_\nu, \\
(A_s^\nu y^\nu)_i \leq (b_s - A_s\bar{x}^{\NCal_s})_i, & \forall i \notin I_s^\nu(\bar{x}), & \forall s \in \ICal_\nu, \\
E_s^\nu y^\nu = 0, & & \forall s \in \ICal_\nu. 
\end{array}\right. 
\end{equation}
Suppose that $z^\nu \in \br^{n_\nu}$ satisfies that 
\begin{equation*}
\left\{\begin{array}{lll}
(A_s^\nu z^\nu)_i \leq 0, & \forall i \in I_s^\nu(\bar{x}), & \forall s \in \ICal_\nu, \\
E_s^\nu z^\nu = 0, & & \forall s \in \ICal_\nu. 
\end{array}\right. 
\end{equation*}
Since $(b_s - A_s\bar{x}^{\NCal_s})_i > 0$ for all $i \notin I_s^\nu(\bar{x})$ and all $s \in \ICal_\nu$, it follows there exists a sufficiently small scalar $\tau > 0$ such that $\tau(A_s^\nu z^\nu)_i \leq (b_s - A_s\bar{x}^{\NCal_s})_i$. This together with the definition of $z^\nu$ yields that $\bar{z}^\nu = \tau z^\nu$ satisfies Eq.~\eqref{SM1-feasibility-condition-2}. As such, we have $(\tau z^\nu)^\top(\nabla_\nu \theta_\nu(\bar{x}) + \bar{\xi}^\nu) \geq 0$. Since $\tau > 0$, we have $(z^\nu)^\top(\nabla_\nu \theta_\nu(\bar{x}) + \bar{\xi}^\nu) \geq 0$. In summary, we have
\begin{equation*}
\left\{\begin{array}{lll}
(A_s^\nu z^\nu)_i \leq 0, & \forall i \in I_s^\nu(\bar{x}), & \forall s \in \ICal_\nu \\
E_s^\nu z^\nu = 0, & & \forall s \in \ICal_\nu 
\end{array}\right. \ \Longrightarrow \ (z^\nu)^\top(\nabla_\nu \theta_\nu(\bar{x}) + \bar{\xi}^\nu) \geq 0. 
\end{equation*}
This implies that the following system does not have a solution:  
\begin{equation*}
\left\{\begin{array}{lll}
(z^\nu)^\top(-\nabla_\nu \theta_\nu(\bar{x}) - \bar{\xi}^\nu) > 0, & & \\  
(A_s^\nu z^\nu)_i \leq 0, & \forall i \in I_s^\nu(\bar{x}), & \forall s \in \ICal_\nu, \\
E_s^\nu z^\nu = 0, & & \forall s \in \ICal_\nu. 
\end{array}\right.
\end{equation*}
Thus, by Lemma~\ref{Lemma:Farkas}, there exist $\bar{\mu}^{\nu, s}$ and $\bar{\lambda}^{\nu, s}_i \geq 0$ such that for all $i \in I_s^\nu(\bar{\x})$ and all $s \in \ICal_\nu$ the following equality holds true:
\begin{equation*}
\nabla_\nu \theta_\nu(\bar{x}) + \bar{\xi}^\nu + \sum_{s \in \ICal_\nu} \left(\sum_{i \in I_s^\nu(\bar{x})} \bar{\lambda}^{\nu, s}_i a_i^{\nu, s} + (E_s^\nu)^\top\bar{\mu}^{\nu, s} \right) = 0, 
\end{equation*}
where $a_i^{\nu, s}$ is the $i^\textnormal{th}$ column of $(A_s^\nu)^\top$. Equivalently, by letting $\bar{\lambda}^{\nu, s}_i = 0$ for all $i \notin I_s^\nu(\bar{x})$ and all $s \in \ICal_\nu$, we have
\begin{equation*}
\nabla_\nu \theta_\nu(\bar{x}) + \bar{\xi}^\nu + \sum_{s \in \ICal_\nu} ((A_s^\nu)^\top\bar{\lambda}^{\nu, s} + (E_s^\nu)^\top\bar{\mu}^{\nu, s}) = 0, \quad (\bar{\lambda}^{\nu, s})^\top(A_s\bar{x}^{\NCal_s} - b_s) = 0, \quad \textnormal{for all } s \in \ICal_\nu. 
\end{equation*}
By the definition of $\bar{\xi}^\nu$, we have
\begin{equation*}
(x^\nu -\bar{x}^\nu)^\top\left(\nabla_\nu \theta_\nu(\bar{x}) + \sum_{s \in \ICal_\nu} ((A_s^\nu)^\top\bar{\lambda}^{\nu, s} + (E_s^\nu)^\top\bar{\mu}^{\nu, s})\right) \geq 0, \textnormal{ for all } x^\nu \in \hat{X}_\nu. 
\end{equation*}
Since $\bar{x}^\nu \in X_\nu(\bar{x}^{-\nu})$, we have
\begin{equation*}
A_s\bar{x}^{\NCal_s} - b_s \leq 0, \quad E_s\bar{x}^{\NCal_s} - d_s = 0, \quad \textnormal{for all } s \in \ICal_\nu. 
\end{equation*}
Putting these pieces together yields that $\bar{x} \in \hat{X}$ is a KKT point. 
\paragraph{Sufficiency.} Suppose that a point $\bar{x} = (x^1, x^2, \ldots, x^N) \in \hat{X}$ is a KKT point so that there exist Lagrangian multipliers $\bar{\lambda}^{\nu, s} \geq 0$ and $\bar{\mu}^{\nu, s}$ satisfying that 
\begin{equation}\label{SM1-feasibility-condition-3}
\begin{array}{rcll}
(x^\nu -\bar{x}^\nu)^\top(\nabla_\nu \theta_\nu(\bar{x}) + \sum_{s \in \ICal_\nu} ((A_s^\nu)^\top\bar{\lambda}^{\nu, s} + (E_s^\nu)^\top\bar{\mu}^{\nu, s})) & \geq & 0, & \forall x^\nu \in \hat{X}_\nu, \\
A_s\bar{x}^{\NCal_s} - b_s & \leq & 0, & \forall s \in \ICal_\nu, \\
E_s\bar{x}^{\NCal_s} - d_s & = & 0, & \forall s \in \ICal_\nu, \\
(\bar{\lambda}^{\nu, s})^\top(A_s\bar{x}^{\NCal_s} - b_s) & = & 0, & \forall s \in \ICal_\nu.    
\end{array}
\end{equation}
Define the function $L_\nu(x^\nu)$ (which is convex due to the monotonicity) by 
\begin{eqnarray*}
\lefteqn{L_\nu(x^\nu) = \theta_\nu(x^\nu; \bar{x}^{-\nu})} \\
& & + \sum_{s \in \ICal_\nu} \left(\left(A_s^\nu x^\nu + \sum_{j \neq \nu, j \in \NCal_s} A_s^j \bar{x}^j  - b_s\right)^\top\bar{\lambda}^{\nu, s} + \left(E_s^\nu x^\nu + \sum_{j \neq \nu, j \in \NCal_s} E_s^j \bar{x}^j  - d_s\right)^\top\bar{\mu}^{\nu, s}\right). 
\end{eqnarray*}
By the definition of $L_\nu$ and using the third and fourth equations in Eq.~\eqref{SM1-feasibility-condition-3}, we have
\begin{equation*}
L_\nu(\bar{x}^\nu) = \theta_\nu(\bar{x}) + \sum_{s \in \ICal_\nu} ((\bar{\lambda}^{\nu, s})^\top(A_s\bar{x}^{\NCal_s} - b_s) + (\bar{\mu}^{\nu, s})^\top(E_s\bar{x}^{\NCal_s} - d_s)) = \theta_\nu(\bar{x}). 
\end{equation*}
The first inequality in Eq.~\eqref{SM1-feasibility-condition-3} implies that $(x^\nu -\bar{x}^\nu)^\top \nabla L_\nu(\bar{x}^\nu) \geq 0$ for all $x^\nu \in \hat{X}_\nu$ and thus we have $L_\nu(\bar{x}^\nu) = \min_{x^\nu \in \hat{X}_\nu} L_\nu(x^\nu)$. Putting these pieces together yields that 
\begin{equation*}
\theta_\nu(\bar{x}) \leq L_\nu(x^\nu), \quad \textnormal{for all } x^\nu \in \hat{X}_\nu. 
\end{equation*}
Suppose that $x^\nu \in \hat{X}_\nu$ is a feasible point in the sense that $x^\nu \in X_\nu(\bar{x}^{-\nu}) \subseteq \hat{X}_\nu$, we have
\begin{equation*}
\begin{array}{rcll}
A_s^\nu x^\nu + \sum_{j \neq \nu, j \in \NCal_s} A_s^j \bar{x}^j  - b_s & \leq & 0, & \forall s \in \ICal_\nu, \\ 
E_s^\nu x^\nu + \sum_{j \neq \nu, j \in \NCal_s} E_s^j \bar{x}^j  - d_s & = & 0, & \forall s \in \ICal_\nu. 
\end{array}
\end{equation*}
By the definition of $L_\nu$ and some simple calculations, we have $L_\nu(x^\nu) \leq \theta_\nu(x^\nu, \bar{x}^{-\nu})$. Thus, we have
\begin{equation*}
\theta_\nu(\bar{x}^\nu, \bar{x}^{-\nu}) \leq \theta_\nu(x^\nu, \bar{x}^{-\nu}), \quad \textnormal{for all } x^\nu \in X_\nu(\bar{x}^{-\nu}). 
\end{equation*}
Repeating the above argument for each player $\nu \in \NCal$, we conclude from Definition~\ref{def:NGNEP-solution} that $\bar{x} \in \hat{X}$ is a solution. 

\section{Variational Equilibrium}
There is an important subclass of solutions, referred to as \emph{variational solutions} (or normalized equilibrium), which is characterized by the condition that the same Lagrange multipliers are associated with the constraints in each player's problem. This condition is motivated by economic models where the Lagrange multipliers in each player's problem have an interpretation as shadow prices of the resources and are thus equal.  Computational algorithms have been developed that specifically target the variational solutions of GNEPs, and these algorithms have been analyzed in terms of asymptotic global convergence.  But iteration complexity analyses have not yet been provided for these algorithms. 
\begin{definition}\label{def:VNE}
We say a point $\bar{\x}$ is a variational solution of an NGNEP if 
\begin{equation*}
\bar{x} \in \hat{X} \cap \left\{x \in \br^n: A_s x^{\NCal_s} \leq b_s, \ E_s x^{\NCal_s} = d_s, \textnormal{ for all } s \in [S]\right\}, 
\end{equation*}
and there exist some Lagrangian multipliers $\bar{\lambda}^s \geq 0$ and $\bar{\mu}^s$ such that 
\begin{equation*}
\textnormal{ for all } \nu \in \NCal \textnormal{ we have } \left\{\begin{array}{rl}
(\bar{x}^\nu - x^\nu)^\top(v_\nu(\bar{x}) + \sum_{s \in \ICal_\nu} ((A_s^\nu)^\top\bar{\lambda}^s + (E_s^\nu)^\top\bar{\mu}^s)) \leq 0, & \forall x^\nu \in \hat{X}_\nu, \\
(\bar{\lambda}^s)^\top(A_s\bar{x}^{\NCal_s} - b_s) = 0, & \forall s \in \ICal_\nu. 
\end{array}\right.
\end{equation*}
\end{definition}
Based on the KKT conditions, the set of variational equilibria of monotone NGNEPs coincides with the solution set of the following VI: 
\begin{equation}\label{def:NVI}
\textnormal{Find } \bar{x} \in X: \quad (x - \bar{x})^\top v(\bar{x}) \geq 0, \textnormal{ for all } x \in X,  
\end{equation}
where $X$ is defined by 
\begin{equation*}
X = \hat{X} \cap \left\{x \in \br^n: A_s x^{\NCal_s} \leq b_s, \ E_s x^{\NCal_s} = d_s, \textnormal{ for all } s \in [S]\right\}.  
\end{equation*}
By reading off classical results from the VI literature, we can derive an existence result for variational solutions of NGNEPs (cf. Definition~\ref{def:NGNEPs}) due to the compactness of $\hat{X}$. Formally, we have
\begin{proposition}\label{Prop:VNE}
There exists at least one variational solution of any NGNEP. 
\end{proposition}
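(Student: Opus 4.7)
The plan is to reduce the existence claim to a standard application of the Hartman–Stampacchia theorem for variational inequalities over a compact convex set. First, I would invoke the KKT characterization of variational solutions: by Definition~\ref{def:VNE}, a point $\bar{x}$ is a variational solution exactly when the displayed optimality system admits common multipliers $(\bar{\lambda}^s,\bar{\mu}^s)_{s\in[S]}$. A Farkas-style argument along the lines of Theorem~\ref{Thm:NGNEP-KKT}, applied this time to the single feasible set $X = \hat{X} \cap \{x \in \br^n : A_s x^{\NCal_s} \leq b_s,\ E_s x^{\NCal_s} = d_s,\ \forall s\in[S]\}$ rather than to the player-dependent sets $X_\nu(\cdot)$, shows that the KKT system of Definition~\ref{def:VNE} is equivalent to the single VI in Eq.~\eqref{def:NVI}. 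This step is the cleaner variational analogue of Theorem~\ref{Thm:NGNEP-KKT} because all players face the same polyhedral coupling constraints.

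Second, I would verify the hypotheses of the Hartman–Stampacchia existence theorem on the VI in Eq.~\eqref{def:NVI}. The set $X$ is convex, being the intersection of the convex set $\hat{X}$ with finitely many affine half-spaces and hyperplanes. It is closed, since $\hat{X}$ is closed (in fact compact by Definition~\ref{def:NGNEPs}) and linear inequalities define closed sets. Boundedness of $X$ is inherited from the compactness of $\hat{X}$, so $X$ is compact. As for the operator $v(x) = (\nabla_1\theta_1(x),\ldots,\nabla_N\theta_N(x))$, each component $\nabla_\nu \theta_\nu$ is $\ell_\theta$-Lipschitz on $\hat{X}$ by the third item of Definition~\ref{def:NGNEPs} and hence continuous, so $v$ is continuous on $X$.

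Third, having verified that $X$ is nonempty, convex and compact and that $v$ is continuous, I would apply the Hartman–Stampacchia theorem (see, e.g.,~\citet{Facchinei-2007-Finite}, Corollary~2.2.5) to conclude the existence of some $\bar{x}\in X$ satisfying $(x-\bar{x})^\top v(\bar{x}) \geq 0$ for all $x\in X$. Unwinding the KKT equivalence established in the first step then yields the required multipliers $\bar{\lambda}^s\geq 0$ and $\bar{\mu}^s$, establishing that $\bar{x}$ is a variational solution in the sense of Definition~\ref{def:VNE}.

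The main obstacle is not the existence step itself, which is classical, but rather the careful justification of the KKT–VI equivalence for variational solutions over the coupled polyhedral set $X$; the subtlety is that the KKT conditions in Definition~\ref{def:VNE} use a single multiplier pair $(\bar{\lambda}^s,\bar{\mu}^s)$ shared across all players indexed by $\nu\in\NCal_s$, whereas the VI in Eq.~\eqref{def:NVI} enforces the coupling constraints jointly. This is handled exactly as in the proof of Theorem~\ref{Thm:NGNEP-KKT} by applying Farkas' lemma (Lemma~\ref{Lemma:Farkas}) once to the full concatenated constraint system rather than separately for each player; an implicit nonemptiness assumption on $X$ (standard in the GNEP literature) is needed to rule out the vacuous case.
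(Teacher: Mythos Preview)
Your approach is essentially the same as the paper's: both reduce the existence of a variational solution to the existence of a solution of the VI in Eq.~\eqref{def:NVI} over the common feasible set $X$, and then invoke a classical compactness-based existence result for VIs. The only minor difference is that the paper justifies nonemptiness of $X$ by appealing to the existence of an NGNEP solution (guaranteed under Definition~\ref{def:NGNEPs} via~\citet[Theorem~2]{Harker-1991-Generalized}), whereas you leave it as a standing assumption.
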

\begin{proof}
Since at least one solution of the NGNEP exists, $X$ will be an nonempty, convex and compact set. Therefore, the VI in Eq.~\eqref{def:NVI} must have a solution and hence there exists at least one variational equilibrium of the NGNEP in the sense of Definition~\eqref{def:VNE}. 
\end{proof}

\section{The AMP Algorithm}\label{app:AMP-proof}
We provide the proof of Theorem~\ref{Thm:AMP-SMT} that delineates an iteration complexity bound of the AMP algorithm for solving \textit{composite} smooth and strongly monotone VIs. More specifically, the scheme aims at solving the following class of VIs: 
\begin{equation}\label{app:CMVI}
\textnormal{Find } z^\star \in Z: \quad (z - z^\star)^\top(F(z^\star) + \nabla G(z^\star)) \geq 0, \quad \textnormal{for all } z \in Z,  
\end{equation} 
where $F: Z \mapsto \br^n$ is $\ell_F$-Lipschitz and $\alpha$-strongly monotone, $G: Z \mapsto \br$ is $\ell_G$-smooth and convex, and $Z$ is an nonempty, closed and convex set with diameter $D_Z > 0$. With the initialization $z_1 = z_1^{\textnormal{ag}} = w_1 \in Z$, a typical iteration of the algorithm is given by
\begin{equation}\label{alg:AMP}
\begin{array}{lcl}
z_k^{\textnormal{md}} & = & (1-\alpha_k)z_k^{\textnormal{ag}} + \alpha_k w_k, \\
z_{k+1} & = & \argmin_{z \in Z} \gamma_k(z- w_k)^\top(F(w_k) + \nabla G(z_k^{\textnormal{md}})) + \tfrac{1}{2}\|z - w_k\|^2, \\
w_{k+1} & = & \argmin_{z \in Z} \gamma_k(z- w_k)^\top(F(z_{k+1}) + \nabla G(z_k^{\textnormal{md}})) + \tfrac{1}{2}\|z - w_k\|^2, \\
z_{k+1}^{\textnormal{ag}} & = & (1-\alpha_k)z_k^{\textnormal{ag}} + \alpha_k z_{k+1}, 
\end{array}
\end{equation} 
where $\alpha_k \in (0, 1)$ and $\gamma_k > 0$ are parameters to be determined. 

\subsection{Technical lemmas}
We present some technical lemmas which are important for the subsequent analysis. To state these lemmas it is useful to define the following \emph{gap function}:  
\begin{equation*}
Q(\tilde{z}, z) = G(\tilde{z}) - G(z) + (\tilde{z} - z)^\top F(z). 
\end{equation*}
Our first lemma gives a descent inequality for the iterates generated by the AMP algorithm (cf. Eq.~\eqref{alg:AMP}). This is a consequence of~\citet[Lemma~6.3]{Juditsky-2011-Solving}; we provide a proof for completeness. 
\begin{lemma}\label{Lemma:AMP-descent}
Suppose that the iterates $\{(z_k^{\textnormal{md}}, z_k, w_k, z_k^{\textnormal{ag}})\}_{k \geq 1}$ are generated by the AMP algorithm (cf. Eq.~\eqref{alg:AMP}). Then, the following statement holds true for all $z \in Z$: 
\begin{equation*}
\gamma_k(z_{k+1} - z)^\top(F(z_{k+1}) + \nabla G(z_k^{\textnormal{md}})) \leq  \tfrac{1}{2}(\|z - w_k\|^2 - \|z - w_{k+1}\|^2) - (\tfrac{1}{2} - \tfrac{\gamma_k^2\ell_F^2}{2})\|z_{k+1} - w_k\|^2. 
\end{equation*}
\end{lemma}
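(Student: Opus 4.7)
The plan is to derive the descent inequality from the first-order optimality conditions of the two inner subproblems that define $z_{k+1}$ and $w_{k+1}$, together with Lipschitz continuity of $F$ and the cosine (three-point) identity for Euclidean projections. This is the standard template used in analyses of mirror-prox with extrapolation, adapted to the composite form $F + \nabla G$.

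First, I would write down the two variational inequalities from the definitions of the iterates. Optimality of $z_{k+1}$ yields
\begin{equation*}
\gamma_k(z_{k+1} - u)^\top(F(w_k) + \nabla G(z_k^{\textnormal{md}})) \leq (w_k - z_{k+1})^\top(z_{k+1} - u), \quad \forall u \in Z,
\end{equation*}
and optimality of $w_{k+1}$ yields
\begin{equation*}
\gamma_k(w_{k+1} - u)^\top(F(z_{k+1}) + \nabla G(z_k^{\textnormal{md}})) \leq (w_k - w_{k+1})^\top(w_{k+1} - u), \quad \forall u \in Z.
\end{equation*}

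Next, I would split the quantity to be bounded as
\begin{equation*}
\gamma_k(z_{k+1}-z)^\top(F(z_{k+1})+\nabla G(z_k^{\textnormal{md}})) = \gamma_k(z_{k+1}-w_{k+1})^\top(F(z_{k+1})+\nabla G(z_k^{\textnormal{md}})) + \gamma_k(w_{k+1}-z)^\top(F(z_{k+1})+\nabla G(z_k^{\textnormal{md}})).
\end{equation*}
The second summand is handled directly by the $w_{k+1}$-optimality inequality with $u=z$, and then by the identity $(w_k-w_{k+1})^\top(w_{k+1}-z) = \tfrac{1}{2}(\|z-w_k\|^2 - \|z-w_{k+1}\|^2 - \|w_{k+1}-w_k\|^2)$ this produces the telescoping pair $\tfrac{1}{2}(\|z-w_k\|^2 - \|z-w_{k+1}\|^2)$ together with a $-\tfrac{1}{2}\|w_{k+1}-w_k\|^2$ residual. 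For the first summand I would add and subtract $F(w_k)$ inside the inner product to write it as $\gamma_k(z_{k+1}-w_{k+1})^\top(F(w_k)+\nabla G(z_k^{\textnormal{md}})) + \gamma_k(z_{k+1}-w_{k+1})^\top(F(z_{k+1})-F(w_k))$, apply the $z_{k+1}$-optimality inequality with $u=w_{k+1}$ to the first piece (which by the cosine identity yields $\tfrac{1}{2}(\|w_{k+1}-w_k\|^2 - \|w_{k+1}-z_{k+1}\|^2 - \|z_{k+1}-w_k\|^2)$), and bound the second piece via Cauchy--Schwarz and Lipschitz continuity of $F$ by $\gamma_k\ell_F\|z_{k+1}-w_k\|\cdot\|z_{k+1}-w_{k+1}\|$.

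Finally, I would collect the terms: the two $\pm\tfrac{1}{2}\|w_{k+1}-w_k\|^2$ contributions cancel, leaving
\begin{equation*}
\tfrac{1}{2}(\|z-w_k\|^2 - \|z-w_{k+1}\|^2) - \tfrac{1}{2}\|z_{k+1}-w_k\|^2 - \tfrac{1}{2}\|z_{k+1}-w_{k+1}\|^2 + \gamma_k\ell_F\|z_{k+1}-w_k\|\,\|z_{k+1}-w_{k+1}\|,
\end{equation*}
and then apply the Young inequality $ab \leq \tfrac{\gamma_k\ell_F}{2}a^2 \cdot \tfrac{1}{\gamma_k\ell_F} + \tfrac{\gamma_k\ell_F}{2}\cdot\ldots$; concretely $\gamma_k\ell_F\|z_{k+1}-w_k\|\,\|z_{k+1}-w_{k+1}\| \leq \tfrac{\gamma_k^2\ell_F^2}{2}\|z_{k+1}-w_k\|^2 + \tfrac{1}{2}\|z_{k+1}-w_{k+1}\|^2$. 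This absorbs the $-\tfrac{1}{2}\|z_{k+1}-w_{k+1}\|^2$ term and yields exactly the stated bound.

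The only subtle point, and in my view the main thing to get right, is the careful splitting that lets the $\|w_{k+1}-w_k\|^2$ terms cancel and the $\|z_{k+1}-w_{k+1}\|^2$ term be absorbed by Young's inequality; without adding and subtracting $F(w_k)$ precisely in the $(z_{k+1}-w_{k+1})$ inner product (rather than the $(w_{k+1}-z)$ one), one does not obtain the clean coefficient $\tfrac{1}{2} - \tfrac{\gamma_k^2\ell_F^2}{2}$ on $\|z_{k+1}-w_k\|^2$. No strong monotonicity of $F$ is needed here; that ingredient will only enter in later lemmas that derive the convergence rate of Theorem~\ref{Thm:AMP-SMT} from this per-iteration descent.
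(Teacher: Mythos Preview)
Your proposal is correct and follows essentially the same route as the paper: use the optimality conditions of the two projection subproblems (the paper writes them directly in three-point form, you write them as inner-product inequalities and then invoke the cosine identity, which is equivalent), instantiate the $z_{k+1}$-inequality at $u=w_{k+1}$ and the $w_{k+1}$-inequality at $u=z$, add and subtract $F(w_k)$ in the $(z_{k+1}-w_{k+1})$ term, and finish with Lipschitz continuity plus Young's inequality to absorb $\|z_{k+1}-w_{k+1}\|^2$. The only cosmetic difference is that the paper applies Young directly to the inner product $(z_{k+1}-w_{k+1})^\top(F(z_{k+1})-F(w_k))$ rather than first passing through Cauchy--Schwarz.
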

\begin{proof}
From the update rule for $z_{k+1}$ and $w_{k+1}$, we have
\begin{align*}
\gamma_k(z_{k+1} - z)^\top(F(w_k) + \nabla G(z_k^{\textnormal{md}})) & \leq \tfrac{1}{2}(\|z - w_k\|^2 -\|z_{k+1} - w_k\|^2 - \|z - z_{k+1}\|^2), \\
\gamma_k(w_{k+1} - z)^\top(F(z_{k+1}) + \nabla G(z_k^{\textnormal{md}})) & \leq \tfrac{1}{2}(\|z - w_k\|^2 - \|w_{k+1} - w_k\|^2 - \|z - w_{k+1}\|^2). 
\end{align*}
Setting $z = w_{k+1}$ in the first inequality and adding the resulting inequality to the second inequality yields that 
\begin{eqnarray*}
\lefteqn{\gamma_k(z_{k+1} - w_{k+1})^\top(F(w_k) + \nabla G(z_k^{\textnormal{md}})) + \gamma_k(w_{k+1} - z)^\top(F(z_{k+1}) + \nabla G(z_k^{\textnormal{md}}))} \\
& \leq & \tfrac{1}{2}(\|z - w_k\|^2 - \|z - w_{k+1}\|^2 - \|z_{k+1} - w_k\|^2 - \|w_{k+1} - z_{k+1}\|^2). 
\end{eqnarray*}
Equivalently, we have
\begin{eqnarray*}
\lefteqn{\gamma_k(z_{k+1} - z)^\top(F(z_{k+1}) + \nabla G(z_k^{\textnormal{md}})) \leq \gamma_k(z_{k+1} - w_{k+1})^\top(F(z_{k+1}) - F(w_k))} \\
& & + \tfrac{1}{2}(\|z - w_k\|^2 - \|z - w_{k+1}\|^2 - \|z_{k+1} - w_k\|^2 - \|w_{k+1} - z_{k+1}\|^2). 
\end{eqnarray*}
By Young's inequality and the fact that $F$ is $\ell_F$-Lipschitz, we have
\begin{equation*}
(z_{k+1} - w_{k+1})^\top(F(z_{k+1}) - F(w_k)) \leq \tfrac{1}{2\gamma_k}\|z_{k+1} - w_{k+1}\|^2 + \tfrac{\gamma_k\ell_F^2}{2}\|z_{k+1} - w_k\|^2. 
\end{equation*}
Putting these pieces together yields the desired inequality. 
\end{proof}
\begin{lemma}\label{Lemma:AMP-gap}
Suppose that the iterates $\{(z_k^{\textnormal{md}}, z_k, w_k, z_k^{\textnormal{ag}})\}_{k \geq 1}$ are generated by the AMP algorithm (cf. Eq.~\eqref{alg:AMP}). Then, the following statement holds true for all $z \in Z$:
\begin{eqnarray*}
\lefteqn{Q(z_{k+1}^{\textnormal{ag}}, z) - (1-\alpha_k)Q(z_k^{\textnormal{ag}}, z)} \\
& \leq & \alpha_k(z_{k+1} - z)^\top(F(z_{k+1}) + \nabla G(z_k^{\textnormal{md}})) + \tfrac{\alpha_k^2\ell_G}{2}\|z_{k+1} - w_k\|^2 - \alpha_k\alpha\|z_{k+1} - z\|^2.
\end{eqnarray*}
\end{lemma}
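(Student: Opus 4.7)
The plan is to unfold the definition of $Q$, handle the smooth-convex term $G$ and the monotone term $F$ separately, and then reassemble the bound. Writing
$$Q(z_{k+1}^{\textnormal{ag}}, z) - (1-\alpha_k) Q(z_k^{\textnormal{ag}}, z) = \bigl[G(z_{k+1}^{\textnormal{ag}}) - (1-\alpha_k)G(z_k^{\textnormal{ag}}) - \alpha_k G(z)\bigr] + \bigl[(z_{k+1}^{\textnormal{ag}} - z) - (1-\alpha_k)(z_k^{\textnormal{ag}} - z)\bigr]^\top F(z),$$
I would observe, from the definition of $z_{k+1}^{\textnormal{ag}}$, that the second bracket equals $\alpha_k(z_{k+1}-z)$, which already gives one of the $\alpha_k(z_{k+1}-z)^\top(\cdot)$ terms we want.

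Next I would bound the first bracket. The identity $z_{k+1}^{\textnormal{ag}} - z_k^{\textnormal{md}} = \alpha_k(z_{k+1}-w_k)$ (which follows from combining the definitions of $z_k^{\textnormal{md}}$ and $z_{k+1}^{\textnormal{ag}}$) together with $\ell_G$-smoothness of $G$ at $z_k^{\textnormal{md}}$ yields
$$G(z_{k+1}^{\textnormal{ag}}) \leq G(z_k^{\textnormal{md}}) + \alpha_k \nabla G(z_k^{\textnormal{md}})^\top(z_{k+1}-w_k) + \tfrac{\alpha_k^2\ell_G}{2}\|z_{k+1}-w_k\|^2.$$
For the remaining piece $G(z_k^{\textnormal{md}}) - (1-\alpha_k)G(z_k^{\textnormal{ag}}) - \alpha_k G(z)$, I would apply convexity of $G$ at $z_k^{\textnormal{md}}$ twice (once against $z_k^{\textnormal{ag}}$ with weight $1-\alpha_k$, once against $z$ with weight $\alpha_k$) and use the key cancellation
$$(1-\alpha_k)(z_k^{\textnormal{md}} - z_k^{\textnormal{ag}}) + \alpha_k(z_k^{\textnormal{md}} - z) = \alpha_k(w_k - z),$$
which holds because $z_k^{\textnormal{md}} = (1-\alpha_k)z_k^{\textnormal{ag}} + \alpha_k w_k$. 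This gives
$$G(z_k^{\textnormal{md}}) - (1-\alpha_k)G(z_k^{\textnormal{ag}}) - \alpha_k G(z) \leq \alpha_k \nabla G(z_k^{\textnormal{md}})^\top(w_k - z).$$
Adding the two $G$-bounds collapses $(z_{k+1}-w_k) + (w_k - z)$ to $(z_{k+1}-z)$, producing exactly $\alpha_k \nabla G(z_k^{\textnormal{md}})^\top(z_{k+1}-z) + \tfrac{\alpha_k^2\ell_G}{2}\|z_{k+1}-w_k\|^2$.

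Finally, I would deal with the $F(z)$ term. So far we have an $\alpha_k(z_{k+1}-z)^\top F(z)$ contribution, but the target bound asks for $F(z_{k+1})$ instead. The $\alpha$-strong monotonicity of $F$ gives
$$(z_{k+1}-z)^\top F(z) \leq (z_{k+1}-z)^\top F(z_{k+1}) - \alpha\|z_{k+1}-z\|^2,$$
and multiplying through by $\alpha_k>0$ supplies both the $F(z_{k+1})$ term and the $-\alpha_k \alpha \|z_{k+1}-z\|^2$ term in the statement. Summing the bounds yields the claimed inequality. The main obstacle is purely algebraic rather than conceptual: keeping track of the identity $(1-\alpha_k)(z_k^{\textnormal{md}} - z_k^{\textnormal{ag}}) + \alpha_k(z_k^{\textnormal{md}} - z) = \alpha_k(w_k - z)$ so that the awkward $w_k$-dependence disappears and the gradient term recombines cleanly into $(z_{k+1}-z)$. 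Once that cancellation is in hand, the rest is a direct invocation of smoothness, convexity, and strong monotonicity.
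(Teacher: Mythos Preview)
Your proposal is correct and follows essentially the same approach as the paper: the same identity $z_{k+1}^{\textnormal{ag}} - z_k^{\textnormal{md}} = \alpha_k(z_{k+1}-w_k)$, the same use of $\ell_G$-smoothness at $z_k^{\textnormal{md}}$, the same two applications of convexity of $G$ at $z_k^{\textnormal{md}}$, and the same final appeal to $\alpha$-strong monotonicity of $F$. The only cosmetic difference is that you separate the $G$- and $F$-parts of $Q$ at the outset, whereas the paper first bounds $G(z_{k+1}^{\textnormal{ag}})$ and then substitutes into the definition of $Q$; the underlying algebra and ingredients are identical.
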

\begin{proof}
From the update rule for $z_k^{\textnormal{md}}$ and $z_{k+1}^{\textnormal{ag}}$, we have $z_{k+1}^{\textnormal{ag}} - z_k^{\textnormal{md}} = \alpha_k(z_{k+1} - w_k)$. Since $G$ is $\ell_G$-smooth, we have
\begin{eqnarray*}
\lefteqn{G(z_{k+1}^{\textnormal{ag}}) \leq G(z_k^{\textnormal{md}}) + (z_{k+1}^{\textnormal{ag}} - z_k^{\textnormal{md}})^\top \nabla G(z_k^{\textnormal{md}}) + \tfrac{\ell_G}{2}\|z_{k+1}^{\textnormal{ag}} - z_k^{\textnormal{md}}\|^2} \\
& \leq & G(z_k^{\textnormal{md}}) + (z_{k+1}^{\textnormal{ag}} - z_k^{\textnormal{md}})^\top \nabla G(z_k^{\textnormal{md}}) + \tfrac{\alpha_k^2\ell_G}{2}\|z_{k+1} - w_k\|^2. 
\end{eqnarray*}
Since $z_{k+1}^{\textnormal{ag}} = (1-\alpha_k)z_k^{\textnormal{ag}} + \alpha_k z_{k+1}$, we have
\begin{eqnarray*}
\lefteqn{G(z_{k+1}^{\textnormal{ag}}) \leq (1-\alpha_k)\left(G(z_k^{\textnormal{md}}) + (z_k^{\textnormal{ag}} - z_k^{\textnormal{md}})^\top \nabla G(z_k^{\textnormal{md}})\right)} \\
& +\, \alpha_k\left(G(z_k^{\textnormal{md}}) + (z - z_k^{\textnormal{md}})^\top \nabla G(z_k^{\textnormal{md}})\right) + \alpha_k(z_{k+1} - z)^\top \nabla G(z_k^{\textnormal{md}}) + \tfrac{\alpha_k^2\ell_G}{2}\|z_{k+1} - w_k\|^2. 
\end{eqnarray*}
Since $G$ is convex, we have
\begin{equation*}
G(z_{k+1}^{\textnormal{ag}}) \leq (1-\alpha_k)G(z_k^{\textnormal{ag}}) + \alpha_k G(z) + \alpha_k(z_{k+1} - z)^\top \nabla G(z_k^{\textnormal{md}}) + \tfrac{\alpha_k^2\ell_G}{2}\|z_{k+1} - w_k\|^2. 
\end{equation*}
Combining the definition of $Q$ with the above inequality and $z_{k+1}^{\textnormal{ag}} = (1-\alpha_k)z_k^{\textnormal{ag}} + \alpha_k z_{k+1}$, we have
\begin{eqnarray*}
\lefteqn{Q(z_{k+1}^{\textnormal{ag}}, z) - (1-\alpha_k)Q(z_k^{\textnormal{ag}}, z)} \\
& \leq & G(z_{k+1}^{\textnormal{ag}}) - (1-\alpha_k)G(z_k^{\textnormal{ag}}) - \alpha_k G(z) + (z_{k+1}^{\textnormal{ag}} - z)^\top F(z) - (1-\alpha_k)(z_k^{\textnormal{ag}} - z)^\top F(z) \\ 
& \leq & \alpha_k(z_{k+1} - z)^\top \nabla G(z_k^{\textnormal{md}}) + \tfrac{\alpha_k^2\ell_G}{2}\|z_{k+1} - w_k\|^2 + (z_{k+1}^{\textnormal{ag}} - z - (1-\alpha_k)(z_k^{\textnormal{ag}} - z))^\top F(z) \\
& = & \alpha_k(z_{k+1} - z)^\top \nabla G(z_k^{\textnormal{md}}) + \tfrac{\alpha_k^2\ell_G}{2}\|z_{k+1} - w_k\|^2 + \alpha_k(z_{k+1} - z)^\top F(z).    
\end{eqnarray*}
Since $F$ is $\alpha$-strongly monotone, we have 
\begin{equation*}
(z_{k+1} - z)^\top F(z) \leq (z_{k+1} - z)^\top F(z_{k+1}) - \alpha\|z_{k+1} - z\|^2. 
\end{equation*}
Putting these pieces together yields the desired inequality. 
\end{proof}
\subsection{Proof of Theorem~\ref{Thm:AMP-SMT}}
Combining Lemma~\ref{Lemma:AMP-descent} and Lemma~\ref{Lemma:AMP-gap}, we have
\begin{eqnarray*}
\lefteqn{Q(z_{k+1}^{\textnormal{ag}}, z) - (1-\alpha_k)Q(z_k^{\textnormal{ag}}, z)} \\
& \leq & \tfrac{\alpha_k}{\gamma_k}\left(\tfrac{1}{2}(\|z - w_k\|^2 - \|z - w_{k+1}\|^2) - (\tfrac{1}{2} - \tfrac{\gamma_k^2\ell_F^2}{2})\|z_{k+1} - w_k\|^2\right) + \tfrac{\alpha_k^2\ell_G}{2}\|z_{k+1} - w_k\|^2 \\ 
& & - \alpha_k\alpha\|z_{k+1} - z\|^2, \\
& = & \tfrac{\alpha_k}{2\gamma_k}\|z - w_k\|^2 - \tfrac{\alpha_k}{2\gamma_k}\|z - w_{k+1}\|^2 - (\tfrac{\alpha_k}{2\gamma_k} - \tfrac{\alpha_k\gamma_k\ell_F^2}{2} - \tfrac{\alpha_k^2\ell_G}{2})\|z_{k+1} - w_k\|^2 - \alpha_k\alpha\|z_{k+1} - z\|^2. 
\end{eqnarray*}
Since $\alpha_k \equiv \alpha_0 = \tfrac{1}{4}\min\{\tfrac{\alpha}{\ell_F}, \sqrt{\tfrac{\alpha}{\ell_G}}\}$ and $\gamma_k = \tfrac{\alpha_k}{\alpha}$ for all $k \geq 1$, we remove the subscript in the above inequality and define the energy term $E_k = Q(z_k^{\textnormal{ag}}, z) + \tfrac{\alpha}{2}\|z - w_k\|^2$. We have
\begin{equation*}
E_{k+1} - (1-\alpha_0)E_k \leq \tfrac{\alpha_0\alpha}{2}\|z - w_k\|^2 - (\tfrac{\alpha}{2} - \tfrac{\alpha_0^2\ell_F^2}{2\alpha} - \tfrac{\alpha_0^2\ell_G}{2})\|z_{k+1} - w_k\|^2 - \alpha_0\alpha\|z_{k+1} - z\|^2. 
\end{equation*}
By Young's inequality, we have
\begin{equation*}
E_{k+1} - (1-\alpha_0)E_k \leq - (\tfrac{\alpha}{2} - \alpha_0\alpha - \tfrac{\alpha_0^2\ell_F^2}{2\alpha} - \tfrac{\alpha_0^2\ell_G}{2})\|z_{k+1} - w_k\|^2. 
\end{equation*}
By the definition of $\alpha_0 = \tfrac{1}{4}\min\{\frac{\alpha}{\ell_F}, \sqrt{\frac{\alpha}{\ell_G}}\}$, we have
\begin{equation*}
\alpha\alpha_0 \leq \tfrac{\alpha}{4}, \quad \tfrac{\alpha_0^2\ell_F^2}{2\alpha} \leq \tfrac{\alpha}{8}, \quad \tfrac{\alpha_0^2\ell_G}{2} \leq \tfrac{\alpha}{8}. 
\end{equation*}
Putting these pieces together yields that $E_{k+1} \leq (1-\alpha_0)E_k$ for all $k \geq 1$. Then, by using the definition of $E_k$, we have
\begin{equation*}
\max_{z \in Z} Q(z_k^{\textnormal{ag}}, z) \leq \left(1-\tfrac{1}{4}\min\left\{\tfrac{\alpha}{\ell_F}, \sqrt{\tfrac{\alpha}{\ell_G}}\right\}\right)^{k-1} \max_{z \in Z} \ \{Q(z_1^{\textnormal{ag}}, z) + \tfrac{\alpha}{2}\|z - w_1\|^2\}. 
\end{equation*}
This together with the fact that the diameter of $Z$ is $D_Z$ implies the desired inequality. 

\section{Iteration Complexity Analysis of the AMPQP Algorithm}\label{app:AMPQP-proof}
We provide the proof of Theorem~\ref{Thm:AMPQP} on the iteration complexity bound of the AMPQP algorithm (cf. Algorithm~\ref{alg:AMPQP}) for solving monotone and strongly monotone NGNEPs. 

\subsection{Technical lemmas}
We present some technical lemmas which are important to the subsequent analysis.  Consider a quadratic penalization of an NGNEP where each player's minimization problem is given by  
\begin{equation*}
\min_{x^\nu \in \hat{X}^\nu} \theta_\nu(x^\nu, x^{-\nu}) +  \left(\sum_{s \in \ICal_\nu} \tfrac{\beta^s}{2}\|\max\{0, A_s x^{\NCal_s}-b_s\}\|^2 + \tfrac{\rho^s}{2}\|E_s x^{\NCal_s}-d_s\|^2\right), 
\end{equation*}
where $(\beta^s, \rho^s) \in \br_+ \times \br_+$ stand for penalty parameters associated with inequality and equality constraints. We define the functions $g_1: \br^n \mapsto \br$ and $h_1: \br^n \mapsto \br$ by
\begin{equation*}
g_1(x) = \sum_{s=1}^S \tfrac{\beta^s}{2}\|\max\{0, A_s x^{\NCal_s}-b_s\}\|^2, \quad h_1(x) = \sum_{s=1}^S \tfrac{\rho^s}{2}\|E_s x^{\NCal_s}-d_s\|^2. 
\end{equation*}
Concatenating the first-order optimality conditions of Eq.~\eqref{prob:NGNEP-QP} for each player, we aim to solve the following VI ($\hat{X}$ is convex and compact with a diameter $D>0$): 
\begin{equation}\label{app:PVI-QP}
\textnormal{Find } x \in \hat{X}: \quad (x' - x)^\top(v(x) + \nabla g_1(x) + \nabla h_1(x)) \geq 0, \ \textnormal{ for all } x' \in \hat{X}.     
\end{equation}
We first state the following well-known result which guarantees that the distance function has a Lipschitz-continuous gradient~\citep[see][Proposition~5]{Lan-2011-Primal}.
\begin{proposition}\label{Prop:smooth}
Given a closed convex set $\CCal \subseteq \br^n$, we let $d_\CCal: \br^n \mapsto \br$ be the distance function to $\CCal$ with respect to $\|\cdot\|$ on $\br^n$. Then, the function $\psi(\cdot) = (d_\CCal(\cdot))^2$ is convex and its gradient is given by $\nabla\psi(x) = 2(x-\PCal_\CCal(x))$ for all $x \in \br^n$. In addition, the gradient is Lipschitz continuous; that is, $\|\nabla\psi(\tilde{x}) - \nabla\psi(x)\| \leq 2\|\tilde{x} - x\|$ for all $\tilde{x}, x \in \br^n$. 
\end{proposition}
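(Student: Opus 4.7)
The plan is to establish the three claims---convexity of $\psi$, the gradient formula $\nabla \psi(x) = 2(x - \PCal_\CCal(x))$, and the Lipschitz bound with constant $2$---by leveraging standard properties of the metric projection $\PCal_\CCal$ onto the closed convex set $\CCal$, in particular the variational characterization
\[
(\PCal_\CCal(x) - x)^\top (y - \PCal_\CCal(x)) \geq 0, \quad \text{for all } y \in \CCal,
\]
together with the firm nonexpansiveness of $\PCal_\CCal$, equivalently of the residual map $I - \PCal_\CCal$.

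For convexity, I would first observe that $(x,y) \mapsto \|x - y\|^2$ is jointly convex in $(x,y) \in \br^n \times \br^n$, and that the partial infimum of a jointly convex function over a convex set remains convex in the remaining variable. Since $\CCal$ is convex, this directly gives that $\psi(x) = \inf_{y \in \CCal}\|x - y\|^2$ is convex on $\br^n$.

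For the gradient formula, I would compute directional derivatives from both sides. Fixing $x$, $v \in \br^n$, and $t > 0$, and writing $p = \PCal_\CCal(x)$ and $p(t) = \PCal_\CCal(x + tv)$, I would combine the upper bound obtained by plugging the nonoptimal point $p$ into the definition of $\psi(x + tv)$,
\[
\psi(x + tv) \leq \|x + tv - p\|^2 = \psi(x) + 2t v^\top (x - p) + t^2 \|v\|^2,
\]
with the matching lower bound obtained from $\psi(x) \leq \|x - p(t)\|^2$,
\[
\psi(x + tv) - \psi(x) \geq \|x + tv - p(t)\|^2 - \|x - p(t)\|^2 = 2t v^\top (x - p(t)) + t^2 \|v\|^2.
\]
Dividing by $t$ and using continuity of the projection (which follows from its nonexpansiveness) to send $p(t) \to p$ as $t \to 0^+$ pins the one-sided directional derivative to $2 v^\top (x - p)$, with the symmetric calculation using $-v$ handling the other side. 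Since this limit is linear in $v$, convexity of $\psi$ upgrades Gateaux differentiability to Frechet differentiability in finite dimension, yielding $\nabla \psi(x) = 2(x - \PCal_\CCal(x))$.

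For the Lipschitz estimate, I would invoke the fact that $I - \PCal_\CCal$ is firmly nonexpansive, a direct consequence of the variational inequality above, which in particular implies it is $1$-Lipschitz; scaling by the factor of $2$ gives $\|\nabla \psi(\tilde{x}) - \nabla \psi(x)\| \leq 2\|\tilde{x} - x\|$ immediately. The main obstacle I anticipate is the step that upgrades the one-sided directional-derivative computation to genuine Frechet differentiability: the projected points $p(t)$ depend on $t$ and one must justify sending $p(t) \to p$, but this falls out of nonexpansiveness of $\PCal_\CCal$, after which the matching upper and lower bounds make the remainder of the argument essentially bookkeeping.
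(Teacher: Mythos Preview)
Your proof is correct. The paper, however, does not actually prove this proposition: it states it as a well-known result and simply cites \citep[Proposition~5]{Lan-2011-Primal} without argument. So there is no paper-proof to compare against; you have supplied a self-contained proof where the authors chose to defer to the literature.

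Your argument is standard and sound. The convexity via partial infimum of a jointly convex function over a convex set is clean. The directional-derivative sandwich using $p$ and $p(t)$ is the usual route to the gradient formula, and your remark that nonexpansiveness of $\PCal_\CCal$ gives $p(t)\to p$ handles the only subtle point. Finally, firm nonexpansiveness of $I-\PCal_\CCal$ (which follows from adding the two projection variational inequalities at $x$ and $\tilde{x}$) immediately yields the Lipschitz constant $2$ for $\nabla\psi$. One could alternatively obtain both differentiability and the gradient formula in one stroke via the Moreau decomposition $\tfrac{1}{2}\|x\|^2 = \tfrac{1}{2}d_\CCal(x)^2 + \tfrac{1}{2}d_\CCal^\star(x)^2$ (the Moreau envelope of the indicator of $\CCal$), but your hands-on approach is equally valid and arguably more elementary.
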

As an immediate consequence of Proposition~\ref{Prop:smooth}, we obtain the following lemma. 
\begin{lemma}\label{Lemma:smooth-QP}
The gradients of the functions $g_1$ and $h_1$ are $\ell_\beta$-Lipschitz continuous and $\ell_\rho$-Lipschitz continuous respectively, where $\ell_\beta \mydefn \sum_{s=1}^S \beta^s\|A_s\|^2$ and $\ell_\rho \mydefn \sum_{s=1}^S \rho^s\|E_s\|^2$. 
\end{lemma}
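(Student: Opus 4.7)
The plan is to reduce both claims to two ingredients: (i) the squared distance to a closed convex set has $2$-Lipschitz gradient, which is Proposition~\ref{Prop:smooth}; and (ii) pre-composing a function with $L$-Lipschitz gradient by a linear map $M$ yields a function with $L\|M\|^2$-Lipschitz gradient, a routine chain-rule computation. Additivity of Lipschitz constants under summation of gradients then gives the claimed bounds.

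For $g_1$, I would first rewrite each summand as a squared distance. Letting $\CCal_s = \{y \in \br^{m_s} : y \leq b_s\}$, which is closed and convex, coordinatewise inspection gives
\begin{equation*}
\tfrac{\beta^s}{2}\|\max\{0, A_s x^{\NCal_s} - b_s\}\|^2 = \tfrac{\beta^s}{2}\, d_{\CCal_s}\!\bigl(A_s x^{\NCal_s}\bigr)^2.
\end{equation*}
Proposition~\ref{Prop:smooth} then yields that $y \mapsto d_{\CCal_s}(y)^2$ has a $2$-Lipschitz gradient on $\br^{m_s}$. Composing with the linear map $x \mapsto A_s x^{\NCal_s} = A_s P_{\NCal_s} x$, where $P_{\NCal_s}$ is the orthogonal projection onto the block of coordinates indexed by $\NCal_s$ (so $\|P_{\NCal_s}\| = 1$), the Lipschitz constant is multiplied by $\|A_s P_{\NCal_s}\|^2 \le \|A_s\|^2$. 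Scaling by $\beta^s/2$ and summing over $s \in [S]$ via the triangle inequality on $\|\nabla g_1(x) - \nabla g_1(x')\|$ produces the upper bound $\ell_\beta = \sum_{s=1}^S \beta^s \|A_s\|^2$.

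For $h_1$, each summand $\tfrac{\rho^s}{2}\|E_s x^{\NCal_s} - d_s\|^2$ is a convex quadratic in $x$, so its Hessian pulled back to $\br^n$ is $\rho^s\, P_{\NCal_s}^\top E_s^\top E_s P_{\NCal_s}$, whose spectral norm equals $\rho^s\|E_s P_{\NCal_s}\|^2 = \rho^s\|E_s\|^2$. Hence its gradient is $\rho^s\|E_s\|^2$-Lipschitz, and summing over $s$ gives $\ell_\rho$.

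The only delicate point is bookkeeping the block-coordinate projection $P_{\NCal_s}$, but since it is orthogonal it has operator norm exactly $1$ and does not contribute an extra factor; there is no essential mathematical obstacle beyond applying Proposition~\ref{Prop:smooth} and the chain rule block by block.
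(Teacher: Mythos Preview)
Your proposal is correct and follows essentially the same approach as the paper: both arguments recognize each summand of $g_1$ as a scaled squared distance to a closed convex set, invoke Proposition~\ref{Prop:smooth} to get the $2$-Lipschitz gradient, and then use the chain rule with the linear map $x \mapsto A_s x^{\NCal_s}$ (picking up the factor $\|A_s\|^2$) before summing over $s$. The paper writes the partial gradients out explicitly and bounds their differences directly, whereas you package the same computation as the general fact that pre-composition by a linear map multiplies the Lipschitz constant by $\|M\|^2$; for $h_1$ you take the slightly shorter Hessian route instead of reusing Proposition~\ref{Prop:smooth}, but the content is identical.
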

\begin{proof}
By the definition, the differentiability of $g_1$ and $h_1$ comes from Proposition~\ref{Prop:smooth}. By the chain rule and letting $d_\CCal$ be the distance function to $\CCal$ with respect to $\|\cdot\|$, we have
\begin{equation*}
\nabla_\nu g_1(x) = \sum_{s = 1}^S \beta^s (A_s^\nu)^\top(\max\{0, A_s x^{\NCal_s} - b_s\}),  \quad \nabla_\nu h_1(x) = \sum_{s = 1}^S \rho^s (E_s^\nu)^\top(E_s x^{\NCal_s}-d_s). 
\end{equation*}
Together with Proposition~\ref{Prop:smooth}, we have
\begin{equation*}
\begin{array}{rclcl}
\|\nabla g_1(\tilde{x})- \nabla g_1(x)\| & \leq & \sum_{s=1}^S \beta^s\|A_s\|\|A_s\tilde{x}^{\NCal_s}-A_s x^{\NCal_s}\| & \leq & \ell_\beta\|\tilde{x}-x\|, \\ 
\|\nabla h_1(\tilde{x})- \nabla h_1(x)\| & \leq & \sum_{s=1}^S \rho^s\|E_s\|\|E_s\tilde{x}^{\NCal_s}-E_s x^{\NCal_s}\| & \leq & \ell_\rho\|\tilde{x}-x\|. 
\end{array}
\end{equation*}
This completes the proof. 
\end{proof}
Combining the above lemma with Theorem~\ref{Thm:AMP-MT} and~\ref{Thm:AMP-SMT}, we obtain the following lemma. 
\begin{lemma}\label{Lemma:subroutine-QP}
Suppose that $x_{k+1} = \textsc{amp}((\beta_{k+1}^s, \rho_{k+1}^s)_{s \in [S]}, \delta_{k+1}, x_k)$ in the sense that $x_{k+1}$ is a $\delta_{k+1}$-approximate solution to the structured VI in Eq.~\eqref{app:PVI-QP} with $(\beta^s, \rho^s) = (\beta_{k+1}^s, \rho_{k+1}^s)$ such that 
\begin{equation*}
\max_{x \in \hat{X}} \left\{g_1(x_{k+1}) + h_1(x_{k+1}) - g_1(x) - h_1(x) + (x_{k+1} - x)^\top v(x)\right\} \leq \delta_{k+1}. 
\end{equation*}
The required number of gradient evaluations at the $k^\textnormal{th}$ iteration (for $k \geq 1$) is bounded by 
\begin{equation*}
N_k = \left\{\begin{array}{ll}
O\left(\sqrt{\frac{\sum_{s=1}^S (\beta_k^s\|A_s\|^2 + \rho_k^s\|E_s\|^2) D^2}{\delta_k}} + \tfrac{\sqrt{N}\ell_\theta D^2}{\delta_k}\right), & \textnormal{if } \alpha = 0, \\
O\left(\left(\sqrt{\frac{\sum_{s=1}^S (\beta_k^s\|A_s\|^2 + \rho_k^s\|E_s\|^2)}{\alpha}} + \tfrac{\sqrt{N}\ell_\theta}{\alpha}\right)\log\left(\tfrac{(\sqrt{N}\ell_\theta+\ell_G)D^2}{\delta_k}\right)\right), & \textnormal{if } \alpha > 0. 
\end{array}\right. 
\end{equation*}
\end{lemma}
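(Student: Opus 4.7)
The plan is to recognize that the subproblem the AMP subroutine is asked to solve is exactly an instance of the composite smooth monotone VI in Eq.~\eqref{subprob:CMVI}, with $F = v$ and $G = g_1 + h_1$. Once this identification is made and suitable Lipschitz constants are computed, the two branches of the bound follow directly by plugging into Theorem~\ref{Thm:AMP-MT} for the case $\alpha = 0$ and Theorem~\ref{Thm:AMP-SMT} for the case $\alpha > 0$, since the $\delta_k$-approximate solution condition stated in the lemma is exactly the gap guarantee produced by both theorems.

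First I would verify the hypotheses. The operator $v$ is monotone (resp.\ $\alpha$-strongly monotone) on $\hat{X}$ by the assumption on the NGNEP, and $g_1 + h_1$ is convex and differentiable. Next I would read off the Lipschitz constants. Lemma~\ref{Lemma:smooth-QP} directly supplies $\ell_G = \sum_{s=1}^S (\beta_k^s \|A_s\|^2 + \rho_k^s \|E_s\|^2)$ for $\nabla(g_1 + h_1)$. For $v$, the $\ell_\theta$-Lipschitz property of each block $\nabla_\nu \theta_\nu$ guaranteed by Definition~\ref{def:NGNEPs} yields, upon squaring and summing over the $N$ blocks,
\begin{equation*}
\|v(x) - v(x')\|^2 = \sum_{\nu=1}^N \|\nabla_\nu \theta_\nu(x) - \nabla_\nu \theta_\nu(x')\|^2 \leq N \ell_\theta^2 \|x - x'\|^2,
\end{equation*}
so $\ell_F = \sqrt{N}\, \ell_\theta$.

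Substituting these two constants together with $D_Z = D$ and $\epsilon = \delta_k$ into Theorem~\ref{Thm:AMP-MT} reproduces the monotone branch of the stated complexity, while the same substitution into Theorem~\ref{Thm:AMP-SMT} reproduces the strongly monotone branch (the logarithm retains the shorthand $\ell_G$ as in the theorem statement). The only mildly delicate step is the block-wise bookkeeping that yields the $\sqrt{N}$ factor in $\ell_F$: the Lipschitz hypothesis in Definition~\ref{def:NGNEPs} bounds one block of $v$ at a time, and must be aggregated via the $\ell_2$-norm identity for concatenated blocks. Once that accounting is in place, the rest of the argument is a mechanical invocation of the AMP complexity theorems.
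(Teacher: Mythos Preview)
Your proposal is correct and follows essentially the same route as the paper: identify the penalized VI as an instance of Eq.~\eqref{subprob:CMVI} with $F=v$ and $G=g_1+h_1$, read off $\ell_G$ from Lemma~\ref{Lemma:smooth-QP} and $\ell_F=\sqrt{N}\ell_\theta$ from the blockwise Lipschitz assumption, then invoke Theorems~\ref{Thm:AMP-MT} and~\ref{Thm:AMP-SMT}. In fact, your argument is slightly more explicit than the paper's, which merely asserts $\ell_F=\sqrt{N}\ell_\theta$ without spelling out the block aggregation.
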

\begin{proof}
By Lemma~\ref{Lemma:smooth-QP}, the VI in Eq.~\eqref{app:PVI-QP} is in the form of Eq.~\eqref{app:CMVI} with $\ell_F = \sqrt{N}\ell_\theta$ and $\ell_G = \sum_{s=1}^S (\beta_k^s\|A_s\|^2 + \rho_k^s\|E_s\|^2)$. Applying Theorem~\ref{Thm:AMP-MT} and~\ref{Thm:AMP-SMT}, we obtain the desired upper bound for the required number of gradient evaluations at the $k^\textnormal{th}$ iteration. 
\end{proof}
We continue with proving that $x_T$ is an $\hat{\epsilon}$-solution of NGNEPs for some $\hat{\epsilon} > 0$.   
\begin{lemma}\label{Lemma:main-QP}
Suppose that $x_T$ is a $\delta_T$-approximate solution to the structured VI in Eq.~\eqref{app:PVI-QP} with $(\beta^s, \rho^s) = (\beta_T^s, \rho_T^s)$ in the sense that 
\begin{equation*}
\max_{x \in \hat{X}} \left\{g_1(x_T) + h_1(x_T) - g_1(x) - h_1(x) + (x_T - x)^\top v(x)\right\} \leq \delta_T.  
\end{equation*}
Then, $x_T \in \hat{X}$ satisfies  
\begin{equation}\label{opt:eps-QP-feas}
\|\max\{0, A_s x_T^{\NCal_s} - b_s\}\| \leq \hat{\epsilon}_1, \quad \|E_s x_T^{\NCal_s} - d_s\| \leq \hat{\epsilon}_1, \quad \textnormal{for all } s \in [S], 
\end{equation}
where $\hat{\epsilon}_1 > 0$ is defined by 
\begin{equation*}
\hat{\epsilon}_1 = 2\sqrt{\tfrac{\delta_T + 10S\left(\max_{1 \leq s \leq S} \left\{\tfrac{\|\bar{\lambda}^s\|^2}{\beta_T^s} + \tfrac{\|\bar{\mu}^s\|^2}{\rho_T^s}\right\}\right)}{\min_{1 \leq s \leq S} \{\beta_T^s, \rho_T^s\}}}, 
\end{equation*}
and for all $\nu \in \NCal$, we have 
\begin{equation*}
(x_T^\nu - x^\nu)^\top v_\nu(x^\nu, x_T^{-\nu}) \leq \hat{\epsilon}_2, 
\end{equation*}
for all $x^\nu \in \hat{X}_\nu$ satisfying that 
\begin{equation}\label{opt:eps-QP-feas-VI}
\|\max\{0, A_s^\nu x^\nu + \sum_{j \in \NCal_s, j \neq \nu} A_s^j x_T^j - b_s\}\| \leq \epsilon, \quad \|E_s^\nu x^\nu + \sum_{j \in \NCal_s, j \neq \nu} E_s^j x_T^j - d_s\| \leq \epsilon, \quad \textnormal{for all } s \in \ICal_\nu, 
\end{equation}
where $\hat{\epsilon}_2 > 0$ is defined by 
\begin{equation*}
\hat{\epsilon}_2 = \delta_T + \tfrac{\epsilon^2}{2}\left(\sum_{s = 1}^S \beta_T^s + \rho_T^s\right). 
\end{equation*} 
\end{lemma}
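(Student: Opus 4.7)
My plan is to derive both conclusions by substituting two carefully chosen test points into the $\delta_T$-approximate variational inequality inequality satisfied by $x_T$, namely
\begin{equation*}
g_1(x_T) + h_1(x_T) - g_1(x) - h_1(x) + (x_T - x)^\top v(x) \leq \delta_T, \quad \forall x \in \hat{X}.
\end{equation*}
The first part (the $\hat{\epsilon}_1$-feasibility bound at $x_T$) will follow by instantiating this with a feasible certificate of the NGNEP; the second part (the $\hat{\epsilon}_2$-QVI bound) will follow by instantiating it with a partial swap of the $\nu$-th block and exploiting that coupling constraints $s \notin \ICal_\nu$ do not involve block $\nu$.

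For the feasibility bound, I would take $x = \bar{x}$ where $\bar{x}$ is a variational equilibrium of the NGNEP, whose existence is guaranteed by Proposition~\ref{Prop:VNE} and which carries a single Lagrangian multiplier pair $(\bar{\lambda}^s, \bar{\mu}^s)$ per constraint block (shared across players). Feasibility of $\bar{x}$ gives $g_1(\bar{x}) = h_1(\bar{x}) = 0$, collapsing the inequality to $g_1(x_T) + h_1(x_T) + (x_T - \bar{x})^\top v(\bar{x}) \leq \delta_T$. Next, I plug $x^\nu = x_T^\nu$ into the variational equilibrium condition from Definition~\ref{def:VNE} for every $\nu$, sum over $\nu \in \NCal$, reindex $\sum_\nu \sum_{s \in \ICal_\nu} = \sum_s \sum_{\nu \in \NCal_s}$, and use $E_s \bar{x}^{\NCal_s} = d_s$ together with the complementary-slackness identity $(\bar{\lambda}^s)^\top(A_s \bar{x}^{\NCal_s} - b_s) = 0$. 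This produces the lower bound
\begin{equation*}
(x_T - \bar{x})^\top v(\bar{x}) \geq -\sum_s \left[(A_s x_T^{\NCal_s} - b_s)^\top \bar{\lambda}^s + (E_s x_T^{\NCal_s} - d_s)^\top \bar{\mu}^s\right].
\end{equation*}
Since $\bar{\lambda}^s \geq 0$, one has $(A_s x_T^{\NCal_s} - b_s)^\top \bar{\lambda}^s \leq \|(A_s x_T^{\NCal_s} - b_s)^+\| \|\bar{\lambda}^s\|$. Young's inequality of the shape $uv \leq \tfrac{\beta_T^s}{4}u^2 + \tfrac{1}{\beta_T^s} v^2$ (and its counterpart for equality constraints) lets me absorb half of $g_1(x_T) + h_1(x_T)$ onto the left-hand side. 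Rearranging yields $g_1(x_T) + h_1(x_T) \leq 2\delta_T + c\,S\,\max_s\bigl(\|\bar{\lambda}^s\|^2/\beta_T^s + \|\bar{\mu}^s\|^2/\rho_T^s\bigr)$ for an absolute constant $c$; bounding any single-$s$ constraint-violation term below by one summand of $g_1(x_T) + h_1(x_T)$ and dividing by $\min_s\{\beta_T^s, \rho_T^s\}/2$ then delivers the stated $\hat{\epsilon}_1$ (the constant $10$ in the statement just reflects a slightly looser splitting in Young's inequality).

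For the $\epsilon$-QVI bound, given any $x^\nu \in \hat{X}_\nu$ satisfying \eqref{opt:eps-QP-feas-VI} at $x_T^{-\nu}$, I substitute $x = (x^\nu, x_T^{-\nu}) \in \hat{X}$ into the $\delta_T$-approximate inequality. Since only the $\nu$-th block differs, $(x_T - x)^\top v(x) = (x_T^\nu - x^\nu)^\top v_\nu(x^\nu, x_T^{-\nu})$, and after dropping the nonnegative terms $g_1(x_T), h_1(x_T)$ from the right-hand side I obtain
\begin{equation*}
(x_T^\nu - x^\nu)^\top v_\nu(x^\nu, x_T^{-\nu}) \leq \delta_T + \bigl[g_1(x) - g_1(x_T)\bigr] + \bigl[h_1(x) - h_1(x_T)\bigr].
\end{equation*}
The key cancellation is that for constraints $s \notin \ICal_\nu$ (equivalently $\nu \notin \NCal_s$) we have $A_s x^{\NCal_s} = A_s x_T^{\NCal_s}$ and $E_s x^{\NCal_s} = E_s x_T^{\NCal_s}$, so those summands cancel block by block between $g_1(x)$ and $g_1(x_T)$ (and similarly for $h_1$). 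For the remaining $s \in \ICal_\nu$, the hypothesis \eqref{opt:eps-QP-feas-VI} bounds $\|(A_s x^{\NCal_s} - b_s)^+\|$ and $\|E_s x^{\NCal_s} - d_s\|$ by $\epsilon$, whence $g_1(x) - g_1(x_T) + h_1(x) - h_1(x_T) \leq \sum_{s \in \ICal_\nu}\tfrac{\beta_T^s + \rho_T^s}{2}\epsilon^2 \leq \tfrac{\epsilon^2}{2}\sum_{s=1}^S (\beta_T^s + \rho_T^s)$, giving the claimed $\hat{\epsilon}_2$. The main technical hurdle is the first part: one must convert a purely gradient-type approximate-VI guarantee into a quantitative bound on constraint violations, which is why a dual certificate $(\bar{\lambda}^s, \bar{\mu}^s)$ with \emph{player-shared} multipliers is required, hence the reliance on a variational equilibrium in the sense of Definition~\ref{def:VNE} rather than on a general KKT point.
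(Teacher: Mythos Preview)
Your proposal is correct and follows essentially the same route as the paper's proof: instantiate the $\delta_T$-inequality first at a variational equilibrium $\bar{x}$ (Proposition~\ref{Prop:VNE}) and combine with the summed KKT inequality to bound $g_1(x_T)+h_1(x_T)$, then at $x=(x^\nu,x_T^{-\nu})$ to obtain the $\hat{\epsilon}_2$ bound. The only cosmetic differences are that the paper solves a quadratic in $\sqrt{g_1(x_T)+h_1(x_T)}$ rather than applying Young's inequality directly, and it drops $g_1(x_T),h_1(x_T)\geq 0$ globally (then discards the nonpositive $s\notin\ICal_\nu$ residuals) instead of using your exact cancellation---both yield the same bounds.
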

\begin{proof}
By Proposition~\ref{Prop:VNE},  there exists a triple $(\bar{x} \in \hat{X}, \bar{\lambda} \geq 0, \bar{\mu}) $ such that  
\begin{equation}\label{condition:main-QP-first}
A_s \bar{x}^{\NCal_s} \leq b_s, \ E_s \bar{x}^{\NCal_s} = d_s, \ (\bar{\lambda}^s)^\top(A_s\bar{x}^{\NCal_s} - b_s) = 0, \ \textnormal{for all } s \in [S], 
\end{equation}
and the following VI holds true for all $\nu \in \NCal$, 
\begin{equation}\label{condition:main-QP-second}
(\bar{x}^\nu - x^\nu)^\top\left(v_\nu(\bar{x}) + \sum_{s \in \ICal_\nu} ((A_s^\nu)^\top\bar{\lambda}^s + (E_s^\nu)^\top\bar{\mu}^s)\right) \leq 0, \quad \textnormal{for all } x^\nu \in \hat{X}_\nu. 
\end{equation}
Note that Eq.~\eqref{condition:main-QP-first} implies that $g_1(\bar{x}) = 0$ and $h_1(\bar{x}) = 0$ and we have 
\begin{equation}\label{inequality:main-QP-third}
g_1(x_T) + h_1(x_T) - g_1(x) - h_1(x) + (x_T - x)^\top v(x) \leq \delta_T, \quad \textnormal{for all } x \in \hat{X}.   
\end{equation}
Plugging $x = \bar{x}$ into Eq.~\eqref{inequality:main-QP-third}, we have 
\begin{equation}\label{inequality:main-QP-fourth}
g_1(x_T) + h_1(x_T) + (x_T - \bar{x})^\top v(\bar{x}) \leq \delta_T. 
\end{equation}
Plugging $x^\nu = x_T^\nu$ into Eq.~\eqref{condition:main-QP-second} yields that 
\begin{equation*}
(\bar{x}^\nu - x_T^\nu)^\top \left(v_\nu(\bar{x}) + \sum_{s \in \ICal_\nu} (A_s^\nu)^\top\bar{\lambda}^s + (E_s^\nu)^\top\bar{\mu}^s\right) \leq 0, \quad \textnormal{ for all } \nu \in \NCal. 
\end{equation*}
Summing over $\nu \in \NCal$ and rearranging yields that 
\begin{equation*}
(\bar{x} - x_T)^\top v(\bar{x}) + \sum_{s=1}^S ((A_s\bar{x}^{\NCal_s} - A_s x_T^{\NCal_s})^\top \bar{\lambda}^s + (E_s\bar{x}^{\NCal_s} - E_s x_T^{\NCal_s})^\top \bar{\mu}^s) \leq 0.  
\end{equation*}
Combining this inequality with Eq.~\eqref{condition:main-QP-first} and the fact that $\bar{\lambda}^s \geq 0$, we have
\begin{eqnarray}\label{inequality:main-QP-key}
\lefteqn{(\bar{x} - x_T)^\top v(\bar{x}) \leq \sum_{s=1}^S (\|\max\{0, A_s x_T^{\NCal_s} - b_s\}\| \|\bar{\lambda}^s\| + \|E_s x_T^{\NCal_s} - d_s\|\|\bar{\mu}^s\|)} \\
& \leq & \left(\max_{1 \leq s \leq S} \|\bar{\lambda}^s\|\right) \sum_{s=1}^S \|\max\{0, A_s x_T^{\NCal_s} - b_s\}\| + \left(\max_{1 \leq s \leq S} \|\bar{\mu}^s\|\right) \sum_{s=1}^S \|E_s x_T^{\NCal_s} - d_s\| \nonumber \\ 
& \leq & \left(\max_{1 \leq s \leq S} \sqrt{\tfrac{2\|\bar{\lambda}^s\|^2 S}{\beta_T^s}}\right) \sqrt{g_1(x_T)} + \left(\max_{1 \leq s \leq S} \sqrt{\tfrac{2\|\bar{\mu}^s\|^2 S}{\rho_T^s}}\right) \sqrt{h_1(x_T)}. \nonumber
\end{eqnarray}
\paragraph{Proof of Eq.~\eqref{opt:eps-QP-feas}.} Combining Eq.~\eqref{inequality:main-QP-key} with Eq.~\eqref{inequality:main-QP-fourth}, we have
\begin{equation*}
g_1(x_T) + h_1(x_T) \leq \delta_T + \left(\sqrt{2S}\max_{1 \leq s \leq S} \left\{\sqrt{\tfrac{\|\bar{\lambda}^s\|^2}{\beta_T^s}} + \sqrt{\tfrac{\|\bar{\mu}^s\|^2}{\rho_T^s}}\right\}\right) \sqrt{g_1(x_T) + h_1(x_T)}. 
\end{equation*}
After some simple calculations, we have
\begin{equation*}
g_1(x_T) + h_1(x_T) \leq 2\delta_T + 20S \left(\max_{1 \leq s \leq S} \left\{\tfrac{\|\bar{\lambda}^s\|^2}{\beta_T^s} + \tfrac{\|\bar{\mu}^s\|^2}{\rho_T^s}\right\}\right). 
\end{equation*}
Then, Eq.~\eqref{opt:eps-QP-feas} follows from the definition of $g_1$, $h_1$ and $\hat{\epsilon}_1$. 
\paragraph{Proof of Eq.~\eqref{opt:eps-QP-feas-VI}.} It follows from $g_1(x_T) \geq 0$, $h_1(x_T) \geq 0$ and Eq.~\eqref{inequality:main-QP-third} that $- g_1(x) - h_1(x) + (x_T - x)^\top v(x) \leq \delta_T$ for all $x \in \hat{X}$. Fixing $\nu \in \NCal$, we plug $x^{-\nu} = x_T^{-\nu}$ into the above inequality and obtain that 
\begin{eqnarray*}
\lefteqn{(x_T^\nu - x^\nu)^\top v_\nu(x^\nu, x_T^{-\nu}) - \sum_{s \in \ICal_\nu} \tfrac{\beta_T^s}{2}\|\max\{0, A_s^\nu x^\nu + \sum_{j \in \NCal_s, j \neq \nu} A_s^j x_T^j - b_s\}\|^2} \\ 
& & - \sum_{s \in \ICal_\nu} \tfrac{\rho_T^s}{2}\|E_s^\nu x^\nu + \sum_{j \in \NCal_s, j \neq \nu} E_s^j x_T^j - d_s\|^2 \leq \delta_T, \quad \textnormal{for all } x^\nu \in \hat{X}_\nu. 
\end{eqnarray*}
Suppose that $x^\nu \in \hat{X}_\nu$ satisfies Eq.~\eqref{opt:eps-QP-feas-VI}, we obtain from $\ICal_\nu \subseteq [S]$ that  
\begin{equation*}
(x_T^\nu - x^\nu)^\top v_\nu(x^\nu, x_T^{-\nu}) \leq \delta_T + \tfrac{\epsilon^2}{2}\left(\sum_{s = 1}^S \beta_T^s + \rho_T^s\right). 
\end{equation*}
Then, Eq.~\eqref{opt:eps-QP-feas-VI} follows from the definition of $\hat{\epsilon}_2$. 
\end{proof}

\subsection{Proof of Theorem~\ref{Thm:AMPQP}} 
Fixing a sufficiently small $\epsilon \in (0, 1)$, we have $x_T \in \hat{X}$ is an $\epsilon$-solution of an NGNEP if
\begin{equation*}
\|\max\{0, A_s x_T^{\NCal_s} - b_s\}\| \leq \epsilon, \quad \|E_s x_T^{\NCal_s} - d_s\| \leq \epsilon, \quad \textnormal{for all } s \in [S], 
\end{equation*}
and, for all $\nu \in \NCal$, we have 
\begin{equation*}
(x_T^\nu - x^\nu)^\top v_\nu(x^\nu, \bar{x}^{-\nu}) \leq C\epsilon, 
\end{equation*}
for all $x^\nu \in \hat{X}_\nu$ satisfying that 
\begin{equation*}
\|\max\{0, A_s^\nu x^\nu + \sum_{j \in \NCal_s, j \neq \nu} A_s^j x_T^j - b_s\}\| \leq \epsilon, \quad \|E_s^\nu x^\nu + \sum_{j \in \NCal_s, j \neq \nu} E_s^j x_T^j - d_s\| \leq \epsilon, \quad \textnormal{for all } s \in \ICal_\nu, 
\end{equation*}
By Lemma~\ref{Lemma:main-QP}, it suffices to guarantee that $\delta_T > 0$ and $(\beta_T^s, \rho_T^s)_{s \in [S]}$ satisfy the following conditions:   
\begin{equation*}
\begin{array}{rcl}
\tfrac{4\delta_T + 40S\left(\max_{1 \leq s \leq S} \left\{\tfrac{\|\bar{\lambda}^s\|^2}{\beta_T^s} + \tfrac{\|\bar{\mu}^s\|^2}{\rho_T^s}\right\}\right)}{\min_{1 \leq s \leq S} \{\beta_T^s, \rho_T^s\}} & \leq & \epsilon^2, \\
\delta_T + \tfrac{\epsilon^2}{2}\left(\sum_{s = 1}^S \beta_T^s + \rho_T^s\right) & \leq & C\epsilon. 
\end{array}
\end{equation*}
From the update rule in Algorithm~\ref{alg:AMPQP}, we have $\beta_T^s = \gamma^T \beta_0^s$ and $\rho_T^s = \gamma^T \rho_0^s$ for all $s \in [S]$ and $\delta_T = \tfrac{\delta_0}{\gamma^T}$ in which $\gamma > 1$. Putting these pieces together, it suffices to guarantee that 
\begin{equation}\label{inequality:AMPQP-main}
\begin{array}{rcl}
\tfrac{\epsilon}{2} & \geq & \tfrac{1}{\gamma^T}\sqrt{\tfrac{\delta_0 + 10S \left(\max_{1 \leq s \leq S} \left\{\tfrac{\|\bar{\lambda}^s\|^2}{\beta_0^s} + \tfrac{\|\bar{\mu}^s\|^2}{\rho_0^s}\right\}\right)}{\min_{1 \leq s \leq S} \{\beta_0^s, \rho_0^s\}}}, \\
C\epsilon & \geq & \tfrac{\delta_0}{\gamma^T} + \tfrac{\gamma^T\epsilon^2}{2} \left(\sum_{s = 1}^S \beta_0^s + \rho_0^s\right). 
\end{array}
\end{equation}
Suppose that we set $T > 0$ as
\begin{equation*}
T = 1 + \left\lfloor \tfrac{\log\left(\tfrac{2}{\epsilon}\right) + \tfrac{1}{2}\log\left(\tfrac{\delta_0 + 10S\left(\max_{1 \leq s \leq S} \left\{\tfrac{\|\bar{\lambda}^s\|^2}{\beta_0^s} + \tfrac{\|\bar{\mu}^s\|^2}{\rho_0^s}\right\}\right)}{\min_{1 \leq s \leq S} \{\beta_0^s, \rho_0^s\}}\right)}{\log(\gamma)}\right\rfloor.   
\end{equation*}
Then, Eq.~\eqref{inequality:AMPQP-main} holds true with a positive constant $C > 0$ given by 
\begin{eqnarray*}
C & = & 2 + \left\lceil\left(\sum_{s = 1}^S \beta_0^s + \rho_0^s\right)\left(\tfrac{\delta_0 + 10S\left(\max_{1 \leq s \leq S} \left\{\tfrac{\|\bar{\lambda}^s\|^2}{\beta_0^s} + \tfrac{\|\bar{\mu}^s\|^2}{\rho_0^s}\right\}\right)}{\min_{1 \leq s \leq S} \{\beta_0^s, \rho_0^s\}}\right)^{1/2} \right. \\
& & \left. + \tfrac{\delta_0}{2}\left(\tfrac{\delta_0 + 10S\left(\max_{1 \leq s \leq S} \left\{\tfrac{\|\bar{\lambda}^s\|^2}{\beta_0^s} + \tfrac{\|\bar{\mu}^s\|^2}{\rho_0^s}\right\}\right)}{\min_{1 \leq s \leq S} \{\beta_0^s, \rho_0^s\}}\right)^{-1/2}\right\rceil.  
\end{eqnarray*}
Since $\beta_k^s = \gamma^k \beta_0^s$ and $\rho_k^s = \gamma^k \rho_0^s$ for all $s \in [S]$ and $\delta_k = \tfrac{\delta_0}{\gamma^k}$, Lemma~\ref{Lemma:subroutine-QP} guarantees that the required number of gradient evaluations at the $k^\textnormal{th}$ iteration is bounded by
\begin{equation*}
N_k = \left\{
\begin{array}{ll}
O\left(\gamma^k\left(\sqrt{\tfrac{\sum_{s=1}^S (\beta_0^s\|A_s\|^2 + \rho_0^s\|E_s\|^2) D^2}{\delta_0}} + \tfrac{\sqrt{N}\ell_\theta D^2}{\delta_0}\right)\right), & \textnormal{if } \alpha=0, \\
O\left(\left(\gamma^{\tfrac{k}{2}}\sqrt{\tfrac{\sum_{s=1}^S (\beta_0^s\|A_s\|^2 + \rho_0^s\|E_s\|^2)}{\alpha}} + \tfrac{\sqrt{N}\ell_\theta}{\alpha}\right)\log\left(\tfrac{\gamma^k(\sqrt{N}\ell_\theta+\ell_G)D^2}{\delta_0}\right)\right), & \textnormal{if } \alpha>0. \\
\end{array}
\right. 
\end{equation*}
Therefore, we conclude that the total number of gradient evaluations required to return an $\epsilon$-solution of the NGNEP is
\begin{equation*}
N_{\textnormal{grad}} = \sum_{k=1}^{T-1} N_k = \left\{
\begin{array}{ll}
O\left(\epsilon^{-1}\right), & \textnormal{if } \alpha = 0, \\
O\left(\epsilon^{-1/2}\log(1/\epsilon)\right), & \textnormal{if } \alpha > 0. \\
\end{array}
\right. 
\end{equation*}
This completes the proof. 

\section{Iteration Complexity Analysis of the AMPAL Algorithm}\label{app:AMPAL-proof}
We provide the proof of Theorem~\ref{Thm:AMPAL} on the iteration complexity bound of the AMPAL algorithm (cf. Algorithm~\ref{alg:AMPAL}) for solving monotone and strongly monotone NGNEPs.

\subsection{Technical lemmas}
We present some technical lemmas which are important to the subsequent analysis. For the ease of presentation, we first review the quadratic penalization of NGNEPs based on the augmented Lagrangian function. In particular, we consider the augmented Lagrangian function for each player's minimization problem given by 
\begin{equation*}
\LCal_\nu(x, \lambda, \mu) = \theta_\nu(x) + \left(\sum_{s \in \ICal_\nu} \tfrac{\beta^s}{2}\|\max\{0, A_s x^{\NCal_s}-b_s+\tfrac{\lambda^s}{\beta^s}\}\|^2 + \tfrac{\rho^s}{2}\|E_s x^{\NCal_s}-d_s+\tfrac{\mu^s}{\rho^s}\|^2\right),  
\end{equation*}
where $(\beta^s, \rho^s) \in \br_+ \times \br_+$ and $(\lambda^s, \mu^s) \in \br^{m_s} \times \br^{e_s}$ stand for penalty parameters and Lagrangian multipliers associated with inequality and equality constraints. We define the functions $g_2: \br^n \mapsto \br$ and $h_2: \br^n \mapsto \br$ by
\begin{equation*}
g_2(x) = \sum_{s=1}^S \tfrac{\beta^s}{2}\|\max\{0, A_s x^{\NCal_s}-b_s+\tfrac{\lambda^s}{\beta^s}\}\|^2, \quad h_2(x) = \sum_{s=1}^S \tfrac{\rho^s}{2}\|E_s x^{\NCal_s}-d_s+\tfrac{\mu^s}{\rho^s}\|^2. 
\end{equation*}
Concatenating the first-order optimality conditions of minimizing the augmented Lagrangian function for each player, we aim at solving the following VI ($\hat{X}$ is convex and compact with a diameter $D>0$): 
\begin{equation}\label{app:PVI-AL}
\textnormal{Find } x \in \hat{X}: \quad (x' - x)^\top(v(x) + \nabla g_2(x) + \nabla h_2(x)) \geq 0, \  \textnormal{ for all } x' \in \hat{X}.     
\end{equation}
As an immediate consequence of Proposition~\ref{Prop:smooth}, we obtain the following lemma. We omit the proof since it is the same as that of Lemma~\ref{Lemma:smooth-QP}.  
\begin{lemma}
The gradients of the functions $g_2$ and $h_2$ are also $\ell_\beta$-Lipschitz continuous and $\ell_\rho$-Lipschitz continuous respectively, where $\ell_\beta \mydefn \sum_{s=1}^S \beta^s\|A_s\|^2$ and $\ell_\rho \mydefn \sum_{s=1}^S \rho^s\|E_s\|^2$. 
\end{lemma}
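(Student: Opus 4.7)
The plan is to mimic the proof of Lemma~\ref{Lemma:smooth-QP} verbatim, with the single observation that the Lagrange-multiplier terms $\lambda^s/\beta^s$ and $\mu^s/\rho^s$ appearing inside the penalties are constant with respect to $x$, and therefore do not affect either differentiability or any Lipschitz constant. In other words, $g_2$ and $h_2$ are, up to an affine shift of the argument of the inner squared-distance function, identical in structure to $g_1$ and $h_1$.

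First, I would establish differentiability summand by summand via Proposition~\ref{Prop:smooth}. Each inequality summand $\tfrac{\beta^s}{2}\|\max\{0, A_s x^{\NCal_s}-b_s+\lambda^s/\beta^s\}\|^2$ is exactly $\tfrac{\beta^s}{2}(d_{\br_-^{m_s}}(y_s(x)))^2$ where $y_s(x) = A_s x^{\NCal_s} - b_s + \lambda^s/\beta^s$ is affine in $x$, and Proposition~\ref{Prop:smooth} therefore gives both differentiability and the explicit chain-rule formula. This yields $\nabla_\nu g_2(x) = \sum_{s=1}^S \beta^s (A_s^\nu)^\top \max\{0, A_s x^{\NCal_s}-b_s+\lambda^s/\beta^s\}$, and an analogous (in fact affine) expression $\nabla_\nu h_2(x) = \sum_{s=1}^S \rho^s (E_s^\nu)^\top (E_s x^{\NCal_s}-d_s+\mu^s/\rho^s)$.

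Second, I would obtain the Lipschitz bounds by the triangle inequality across the $S$ summands combined with the $2$-Lipschitz bound on the projection-based gradient from Proposition~\ref{Prop:smooth}. For any $\tilde{x}, x \in \br^n$, the key estimate is
\begin{equation*}
\|\nabla g_2(\tilde{x}) - \nabla g_2(x)\| \;\leq\; \sum_{s=1}^S \beta^s \|A_s\|\, \bigl\| \max\{0, y_s(\tilde{x})\} - \max\{0, y_s(x)\} \bigr\|,
\end{equation*}
after which the nonexpansiveness of the projection map onto the nonpositive orthant (so that the constant shift $-b_s+\lambda^s/\beta^s$ cancels) gives $\|\max\{0, y_s(\tilde x)\} - \max\{0, y_s(x)\}\| \leq \|A_s\|\|\tilde x - x\|$, producing $\ell_\beta = \sum_{s=1}^S \beta^s \|A_s\|^2$. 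For $h_2$ the corresponding estimate is immediate since $\nabla h_2$ is affine in $x$, and the constants $-d_s + \mu^s/\rho^s$ drop out of the difference, giving $\ell_\rho = \sum_{s=1}^S \rho^s \|E_s\|^2$.

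There is essentially no obstacle: the only technical subtlety is the same one already handled in Lemma~\ref{Lemma:smooth-QP}, namely that the outer $\max\{0,\cdot\}$ does not spoil smoothness because the squared distance to a convex set is a $C^{1,1}$ function. Since the Lagrange-multiplier offsets are constants, they vanish under the differencing that defines the Lipschitz modulus, which is exactly why $\ell_\beta$ and $\ell_\rho$ match those of Lemma~\ref{Lemma:smooth-QP}.
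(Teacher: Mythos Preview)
Your proposal is correct and matches the paper's own approach exactly: the paper simply states that the proof is omitted because it is the same as that of Lemma~\ref{Lemma:smooth-QP}, which is precisely the argument you spell out (the constant offsets $\lambda^s/\beta^s$ and $\mu^s/\rho^s$ cancel in the difference, leaving the same Lipschitz constants).
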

Combining this lemma with Theorem~\ref{Thm:AMP-MT} and~\ref{Thm:AMP-SMT}, we get the following lemma for the subroutine in Algorithm~\ref{alg:AMPAL}. The proof is omitted since it is the same as that of Lemma~\ref{Lemma:subroutine-QP}.   
\begin{lemma}\label{Lemma:subroutine-AL}
Suppose that $x_{k+1} = \textsc{amp}((\beta_{k+1}^s, \rho_{k+1}^s)_{s \in [S]}, (\lambda_k^s, \mu_k^s)_{s \in [S]}, \delta_{k+1}, x_k)$ in the sense that $x_{k+1}$ is a $\delta_{k+1}$-approximate solution to the structured VI in Eq.~\eqref{app:PVI-AL} with $(\beta^s, \rho^s) = (\beta_{k+1}^s, \rho_{k+1}^s)$ and $(\lambda^s, \mu^s) = (\lambda_k^s, \mu_k^s)$ such that 
\begin{equation*}
\max_{x \in \hat{X}} \left\{g_2(x_{k+1}) + h_2(x_{k+1}) - g_2(x) - h_2(x) + (x_{k+1} - x)^\top v(x)\right\} \leq \delta_{k+1}. 
\end{equation*}
The required number of gradient evaluations at the $k^\textnormal{th}$ iteration (for $k \geq 1$) is bounded by
\begin{equation*}
N_k = \left\{\begin{array}{ll}
O\left(\sqrt{\tfrac{\sum_{s=1}^S (\beta_k^s\|A_s\|^2 + \rho_k^s\|E_s\|^2) D^2}{\delta_k}} + \tfrac{\sqrt{N}\ell_\theta D^2}{\delta_k}\right), & \textnormal{if } \alpha = 0, \\
O\left(\left(\sqrt{\tfrac{\sum_{s=1}^S (\beta_k^s\|A_s\|^2 + \rho_k^s\|E_s\|^2)}{\alpha}} + \tfrac{\sqrt{N}\ell_\theta}{\alpha}\right)\log\left(\tfrac{(\sqrt{N}\ell_\theta+\ell_G)D^2}{\delta_k}\right)\right), & \textnormal{if } \alpha > 0. \\
\end{array}\right. 
\end{equation*}
\end{lemma}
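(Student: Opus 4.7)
The plan is to observe that this lemma is a direct consequence of applying the AMP convergence guarantees (Theorems~\ref{Thm:AMP-MT} and~\ref{Thm:AMP-SMT}) to the VI in Eq.~\eqref{app:PVI-AL}, after identifying the appropriate constants. Specifically, I would cast Eq.~\eqref{app:PVI-AL} into the composite form of Eq.~\eqref{app:CMVI} by setting $F(x) = v(x)$, $G(x) = g_2(x) + h_2(x)$, and $Z = \hat{X}$, and then read off the complexity estimate.

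First I would verify the smoothness/monotonicity constants. For $F = v$, Definition~\ref{def:NGNEPs} gives $\|\nabla_\nu \theta_\nu(x) - \nabla_\nu \theta_\nu(x')\| \leq \ell_\theta \|x-x'\|$ for each $\nu \in \NCal$, so concatenating blocks produces $\|v(x) - v(x')\| \leq \sqrt{N}\ell_\theta \|x-x'\|$; hence $\ell_F = \sqrt{N}\ell_\theta$. Monotonicity (or $\alpha$-strong monotonicity) of $F$ is exactly the assumption on the underlying NGNEP. For $G = g_2 + h_2$, the lemma immediately preceding in the excerpt gives $\ell_G = \sum_{s=1}^S (\beta_k^s \|A_s\|^2 + \rho_k^s \|E_s\|^2)$, with convexity inherited from the squared-distance representation. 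Finally, $\diam(\hat{X}) = D$ is given.

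Next I would plug these constants into Theorem~\ref{Thm:AMP-MT} in the monotone case ($\alpha = 0$) to obtain the gradient-evaluation bound
\[
O\!\left(\sqrt{\tfrac{\ell_G D^2}{\delta_k}} + \tfrac{\ell_F D^2}{\delta_k}\right) = O\!\left(\sqrt{\tfrac{\sum_{s=1}^S(\beta_k^s\|A_s\|^2 + \rho_k^s\|E_s\|^2)D^2}{\delta_k}} + \tfrac{\sqrt{N}\ell_\theta D^2}{\delta_k}\right),
\]
matching the first line of the claim. In the strongly monotone case ($\alpha > 0$), Theorem~\ref{Thm:AMP-SMT} yields
\[
O\!\left(\left(\sqrt{\tfrac{\ell_G}{\alpha}} + \tfrac{\ell_F}{\alpha}\right)\log\!\left(\tfrac{(\ell_F + \ell_G)D^2}{\delta_k}\right)\right),
\]
and substituting the same constants gives the second line of the claim, with the benign observation that the $\ell_G$ in the logarithm can be harmlessly upper bounded by a constant-order term absorbed into the hidden $O(\cdot)$ constant (since it only appears inside a log).

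The main obstacle is essentially bookkeeping rather than a conceptual hurdle: one must be careful that the $\delta_{k+1}$-approximate solution concept used here (a gap on $g_2 + h_2 - (g_2+h_2)(x) + \langle \cdot - x, v(x)\rangle$) matches verbatim the gap $\max_{z \in Z} \{G(\hat{z}) - G(z) + (\hat z - z)^\top F(z)\}$ for which Theorems~\ref{Thm:AMP-MT} and~\ref{Thm:AMP-SMT} guarantee the stated convergence. With $F = v$ and $G = g_2 + h_2$ the two gaps coincide, so the bound transfers directly and Lemma~\ref{Lemma:subroutine-AL} follows exactly as in the proof of Lemma~\ref{Lemma:subroutine-QP}.
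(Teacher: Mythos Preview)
Your proposal is correct and follows essentially the same approach as the paper: the paper's proof simply notes that the argument is identical to that of Lemma~\ref{Lemma:subroutine-QP}, which in turn casts the VI into the form of Eq.~\eqref{app:CMVI} with $\ell_F=\sqrt{N}\ell_\theta$ and $\ell_G=\sum_{s=1}^S(\beta_k^s\|A_s\|^2+\rho_k^s\|E_s\|^2)$ and invokes Theorems~\ref{Thm:AMP-MT} and~\ref{Thm:AMP-SMT}. Your write-up is in fact more detailed than the paper's, spelling out the $\sqrt{N}$ factor and the gap-function match explicitly.
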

To establish the convergence of the AMPAL algorithm, we provide the following lemma. 
\begin{lemma}\label{Lemma:dual-AL}
Suppose that $\{(\lambda_k^s, \mu_k^s)_{s \in [S]}\}_{k \geq 1}$ is generated by the AMPAL algorithm. Then, for any $(\lambda^s, \mu^s) \in \br_+^{m_s} \times \br^{e_s}$, we have
\begin{equation*}
\begin{array}{rcl}
\tfrac{1}{2\beta_{k+1}^s}(\|\lambda_{k+1}^s - \lambda^s\|^2 - \|\lambda_k^s - \lambda^s\|^2 + \|\lambda_{k+1}^s - \lambda_k^s\|^2) & \leq & (\lambda_{k+1}^s - \lambda^s)^\top(A_s x_{k+1}^{\NCal_s} - b_s), \\
\frac{1}{2\rho_{k+1}^s}(\|\mu_{k+1}^s - \mu^s\|^2 - \|\mu_k^s - \mu^s\|^2 + \|\mu_{k+1}^s - \mu_k^s\|^2) & = & (\mu_{k+1}^s - \mu^s)^\top(E_s x_{k+1}^{\NCal_s} - d_s).    
\end{array}
\end{equation*}
\end{lemma}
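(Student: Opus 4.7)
The plan is to treat the two updates separately and, in both cases, reduce the claim to the cosine-law identity
\begin{equation*}
(a-b)^\top(a-c) = \tfrac{1}{2}\bigl(\|a-b\|^2 - \|b-c\|^2 + \|a-c\|^2\bigr),
\end{equation*}
applied with $a=\lambda_{k+1}^s$ or $a=\mu_{k+1}^s$, $b=\lambda^s$ or $b=\mu^s$, and $c=\lambda_k^s$ or $c=\mu_k^s$. The right-hand side of this identity is precisely the quantity appearing in the bracket of each inequality in the lemma, so the task is to identify the vector $a-c$ with a (possibly scaled) constraint residual.

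For the equality block, the update rule in Algorithm~\ref{alg:AMPAL} gives the clean identity $\mu_{k+1}^s-\mu_k^s = \rho_{k+1}^s(E_s x_{k+1}^{\NCal_s}-d_s)$. Plugging this into the cosine-law identity with $a=\mu_{k+1}^s$, $b=\mu^s$, $c=\mu_k^s$ and dividing through by $\rho_{k+1}^s$ yields the claimed equality immediately, with no dependence on the sign of any component of $\mu^s$.

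For the inequality block, the update $\lambda_{k+1}^s = \max\{0,\lambda_k^s+\beta_{k+1}^s(A_s x_{k+1}^{\NCal_s}-b_s)\}$ is precisely the Euclidean projection of $z := \lambda_k^s+\beta_{k+1}^s(A_s x_{k+1}^{\NCal_s}-b_s)$ onto the nonnegative orthant $\br_+^{m_s}$. The first-order optimality condition of projection gives the standard variational inequality
\begin{equation*}
(\lambda_{k+1}^s - \lambda^s)^\top(z - \lambda_{k+1}^s) \geq 0 \quad \text{for all } \lambda^s \in \br_+^{m_s},
\end{equation*}
which, after substituting for $z$ and rearranging, reads $\beta_{k+1}^s(\lambda_{k+1}^s - \lambda^s)^\top(A_s x_{k+1}^{\NCal_s} - b_s) \geq (\lambda_{k+1}^s - \lambda^s)^\top(\lambda_{k+1}^s - \lambda_k^s)$. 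Applying the cosine-law identity to the right-hand side and dividing by $\beta_{k+1}^s$ produces the claimed inequality.

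The only subtle point---and the step where the nonnegativity hypothesis on $\lambda^s$ is genuinely used---is the projection inequality above; the equality block requires no such hypothesis, which is consistent with the lemma's assertion of equality there. Everything else is algebraic bookkeeping, so no serious obstacle is expected.
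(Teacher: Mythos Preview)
Your proof is correct. Both you and the paper start from the same three-point identity $(a-b)^\top(a-c)=\tfrac12(\|a-b\|^2-\|b-c\|^2+\|a-c\|^2)$, and the equality case for $\mu$ is handled identically. For the inequality case the approaches diverge slightly: the paper rewrites $\lambda_{k+1}^s-\lambda_k^s=\max\{-\lambda_k^s,\beta_{k+1}^s(A_sx_{k+1}^{\NCal_s}-b_s)\}$ and then does a coordinate-wise case split on the active index set $I_k^s$, using $\lambda^s\ge 0$ on the inactive coordinates to pass from $-\lambda_k^s/\beta_{k+1}^s$ to $(A_sx_{k+1}^{\NCal_s}-b_s)$. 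Your route via the projection variational inequality is more concise and makes the role of the hypothesis $\lambda^s\in\br_+^{m_s}$ transparent (it is exactly the feasibility required by the projection VI), while the paper's argument is a direct computation that avoids invoking projection theory. Both are standard and equally valid.
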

\begin{proof}
By the update of $\lambda_k^s$ and $\mu_k^s$ in Algorithm~\ref{alg:AMPAL}, we have 
\begin{eqnarray*}
\lefteqn{\tfrac{1}{2\beta_{k+1}^s}(\|\lambda_{k+1}^s - \lambda^s\|^2 - \|\lambda_k^s - \lambda^s\|^2 + \|\lambda_{k+1}^s - \lambda_k^s\|^2)} \\
& = & \tfrac{1}{\beta_{k+1}^s}(\lambda_{k+1}^s - \lambda^s)^\top\max\{-\lambda_k^s, \beta_{k+1}^s(A_s x_{k+1}^{\NCal_s} - b_s)\}, 
\end{eqnarray*}
and 
\begin{equation*}
\tfrac{1}{2\rho_{k+1}^s}(\|\mu_{k+1}^s - \mu^s\|^2 - \|\mu_k^s - \mu^s\|^2 + \|\mu_{k+1}^s - \mu_k^s\|^2) = (\mu_{k+1}^s - \mu^s)^\top(E_s x_{k+1}^{\NCal_s} - d_s), 
\end{equation*}
where the second equality is one of desired results. Let us denote $I_k^s$ as the set of coordinates of $\lambda_k^s + \beta_{k+1}^s(A_s x_{k+1}^{\NCal_s} - b_s) \in \br^{m^s}$ whose values are nonnegative. This together with the update of $\lambda_{k+1}^s$ yields that $(\lambda_{k+1}^s)_j = 0$ for all $j \in [m^s] \setminus I_k^s$. Then, we have
\begin{eqnarray*}
\lefteqn{\tfrac{1}{\beta_{k+1}^s}(\lambda_{k+1}^s - \lambda^s)^\top\max\{-\lambda_k^s, \beta_{k+1}^s(A_s x_{k+1}^{\NCal_s} - b_s)\}} \\ 
& = & \sum_{j \in I_k^s} (\lambda_{k+1}^s - \lambda^s)_j(A_s x_{k+1}^{\NCal_s} - b_s)_j + \sum_{j \in [m^s] \setminus I_k^s} (- \lambda^s)_j\left(-\tfrac{\lambda_k^s}{\beta_{k+1}^s}\right)_j. 
\end{eqnarray*}
Since $\lambda^s \geq 0$ and $\left(-\tfrac{\lambda_k^s}{\beta_{k+1}^s}\right)_j > (A_s x_{k+1}^{\NCal_s} - b_s)_j$ for all $j \in [m^s] \setminus I_k^s$, we have
\begin{equation*}
\sum_{j \in [m^s] \setminus I_k^s} (- \lambda^s)_j\left(-\tfrac{\lambda_k^s}{\beta_{k+1}^s}\right)_j \leq \sum_{j \in [m^s] \setminus I_k^s} (- \lambda^s)_j(A_s x_{k+1}^{\NCal_s} - b_s)_j = \sum_{j \in [m^s] \setminus I_k^s} (\lambda_{k+1}^s - \lambda^s)_j(A_s x_{k+1}^{\NCal_s} - b_s)_j. 
\end{equation*}
Putting these pieces together yields that 
\begin{equation*}
\tfrac{1}{\beta_{k+1}^s}(\lambda_{k+1}^s - \lambda^s)^\top\max\{-\lambda_k^s, \beta_{k+1}^s(A_s x_{k+1}^{\NCal_s} - b_s)\} \leq (\lambda_{k+1}^s - \lambda^s)^\top (A_s x_{k+1}^{\NCal_s} - b_s). 
\end{equation*}
This completes the proof. 
\end{proof}
We continue with studying the per-iteration progress of the AMPAL algorithm. 
\begin{lemma}\label{Lemma:main-AL}
Suppose that $x_{k+1} = \textsc{amp}((\beta_{k+1}^s, \rho_{k+1}^s)_{s \in [S]}, (\lambda_k^s, \mu_k^s)_{s \in [S]}, \delta_{k+1}, x_k)$ in the sense that $x_{k+1}$ is a $\delta_{k+1}$-approximate solution to the structured VI in Eq.~\eqref{app:PVI-AL} with $(\beta^s, \rho^s) = (\beta_{k+1}^s, \rho_{k+1}^s)$ and $(\lambda^s, \mu^s) = (\lambda_k^s, \mu_k^s)$ such that 
\begin{equation*}
\max_{x \in \hat{X}} \left\{g_2(x_{k+1}) + h_2(x_{k+1}) - g_2(x) - h_2(x) + (x_{k+1} - x)^\top v(x)\right\} \leq \delta_{k+1}. 
\end{equation*}
Then,  $x_T \in \hat{X}$ satisfies that 
\begin{equation}\label{opt:eps-AL-feas}
\|\max\{0, A_s x_T^{\NCal_s} - b_s\}\| \leq \hat{\epsilon}_1, \quad \|E_s x_T^{\NCal_s} - d_s\| \leq \hat{\epsilon}_1, \quad \textnormal{for all } s \in [S], 
\end{equation}
where $\hat{\epsilon}_1 > 0$ is defined by 
\begin{equation*}
\hat{\epsilon}_1 = \tfrac{1}{\gamma^T}\left(\delta_0 T + \left(\sum_{s=1}^S \tfrac{\|\mu_0^s - \bar{\mu}^s\|^2}{2\rho_0^s} + \tfrac{\|\lambda_0^s - \bar{\lambda}^s\|^2}{2\beta_0^s}\right)\right), 
\end{equation*}
and for all $\nu \in \NCal$, we have 
\begin{equation*}
(x_T^\nu - x^\nu)^\top v_\nu(x^\nu, x_T^{-\nu}) \leq \hat{\epsilon}_2, 
\end{equation*}
for all $x^\nu \in \hat{X}_\nu$ satisfying that 
\begin{equation}\label{opt:eps-AL-feas-VI}
\|\max\{0, A_s^\nu x^\nu + \sum_{j \in \NCal_s, j \neq \nu} A_s^j x_T^j - b_s\}\| \leq \epsilon, \quad \|E_s^\nu x^\nu + \sum_{j \in \NCal_s, j \neq \nu} E_s^j x_T^j - d_s\| \leq \epsilon, \quad \textnormal{for all } s \in \ICal_\nu, 
\end{equation}
where $\hat{\epsilon}_2 > 0$ is defined by 
\begin{equation*}
\hat{\epsilon}_2 = \hat{\epsilon}_1 + \tfrac{\epsilon^2(1+\gamma^T)}{2} \left(\sum_{s = 1}^S \beta_0^s + \rho_0^s\right) + \epsilon^2\left(2\delta_0(T-1) + \left(\sum_{s=1}^S \tfrac{\|\mu_0^s - \bar{\mu}^s\|^2 + \|\bar{\mu}^s\|^2}{\rho_0^s} + \tfrac{\|\lambda_0^s - \bar{\lambda}^s\|^2 + \|\bar{\lambda}^s\|^2}{\beta_0^s}\right)\right). 
\end{equation*} 
\end{lemma}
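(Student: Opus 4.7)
The strategy parallels Lemma~\ref{Lemma:main-QP}, but the dual update now turns the ``pay the penalty'' bound into a contracting primal-dual potential. First, Proposition~\ref{Prop:VNE} produces a variational solution $(\bar x, \bar\lambda, \bar\mu)$, and Theorem~\ref{Thm:NGNEP-KKT} makes $\bar x$ a KKT point with these common Lagrange multipliers; summing the player-wise KKT VI over $\nu$ together with complementary slackness gives $(x_{k+1} - \bar x)^\top v(\bar x) \geq -\sum_s [(\bar\lambda^s)^\top(A_s x_{k+1}^{\NCal_s} - b_s) + (\bar\mu^s)^\top(E_s x_{k+1}^{\NCal_s} - d_s)]$. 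The multiplier update yields the closed forms $g_2(x_{k+1}) = \sum_s \|\lambda_{k+1}^s\|^2/(2\beta_{k+1}^s)$ and $h_2(x_{k+1}) = \sum_s \|\mu_{k+1}^s\|^2/(2\rho_{k+1}^s)$, while the test point $\bar x$ gives $g_2(\bar x) \leq \sum_s \|\lambda_k^s\|^2/(2\beta_{k+1}^s)$ (because $A_s\bar x^{\NCal_s} - b_s \leq 0$ and $\lambda_k^s \geq 0$) and $h_2(\bar x) = \sum_s \|\mu_k^s\|^2/(2\rho_{k+1}^s)$.

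Plugging these into the $\delta_{k+1}$-approximate solution condition at $x = \bar x$, rewriting the differences using the dual-update identity $(\lambda_{k+1}^s)^\top(A_s x_{k+1}^{\NCal_s} - b_s) = (\|\lambda_{k+1}^s\|^2 - \|\lambda_k^s\|^2 + \|\lambda_{k+1}^s - \lambda_k^s\|^2)/(2\beta_{k+1}^s)$ and its analogue for $\mu$, and then invoking Lemma~\ref{Lemma:dual-AL} at $\lambda^s = \bar\lambda^s$, $\mu^s = \bar\mu^s$, the squared-increment terms $\|\lambda_{k+1}^s - \lambda_k^s\|^2/(2\beta_{k+1}^s)$ cancel on both sides, leaving the clean per-iteration inequality
\begin{equation*}
\sum_s \frac{\|\lambda_{k+1}^s - \bar\lambda^s\|^2 - \|\lambda_k^s - \bar\lambda^s\|^2}{2\beta_{k+1}^s} + \sum_s \frac{\|\mu_{k+1}^s - \bar\mu^s\|^2 - \|\mu_k^s - \bar\mu^s\|^2}{2\rho_{k+1}^s} \leq \delta_{k+1}.
\end{equation*}
Since $\beta_{k+1}^s = \gamma \beta_k^s$ and $\rho_{k+1}^s = \gamma\rho_k^s$, the potential $E_k := \sum_s \|\lambda_k^s - \bar\lambda^s\|^2/(2\beta_k^s) + \sum_s \|\mu_k^s - \bar\mu^s\|^2/(2\rho_k^s)$ obeys the linear recursion $E_{k+1} \leq E_k/\gamma + \delta_{k+1}$; unrolling with $\delta_k = \delta_0/\gamma^k$ then gives $E_T \leq (E_0 + T\delta_0)/\gamma^T$, which is exactly $\hat\epsilon_1$.

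To translate this dual bound into primal feasibility I will use the dual update directly: $\max\{0, A_s x_T^{\NCal_s} - b_s\} \leq \lambda_T^s/\beta_T^s$ componentwise (from $\lambda_T^s = \beta_T^s\max\{0,(A_s x_T^{\NCal_s} - b_s) + \lambda_{T-1}^s/\beta_T^s\}$ together with $\lambda_{T-1}^s \geq 0$), and the exact identity $E_s x_T^{\NCal_s} - d_s = (\mu_T^s - \mu_{T-1}^s)/\rho_T^s$. Applying $\|\lambda_T^s\| \leq \|\bar\lambda^s\| + \sqrt{2\beta_T^s E_T}$ and the analogue for $\mu$, both residuals are controlled by $\hat\epsilon_1$. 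For the VI bound, I will replay the QPM argument of Lemma~\ref{Lemma:main-QP} verbatim: plug $x = (x^\nu; x_T^{-\nu})$ into the $\delta_T$-approximate VI condition with $x^\nu \in \hat X_\nu$ satisfying the $\epsilon$-feasibility condition, drop $-g_2(x_T) - h_2(x_T) \leq 0$, and expand $g_2(x^\nu; x_T^{-\nu})$ and $h_2(x^\nu; x_T^{-\nu})$. The augmented-Lagrangian offsets $\lambda_{T-1}^s/\beta_T^s$ and $\mu_{T-1}^s/\rho_T^s$ decouple from the $\epsilon$-feasible residuals via $(a+b)^2 \leq 2a^2 + 2b^2$, producing the pure $\epsilon$-penalty contribution $\tfrac{\epsilon^2(1+\gamma^T)}{2}(\sum_s\beta_0^s + \rho_0^s)$ plus cross terms involving $\|\lambda_{T-1}^s\|^2/\beta_T^s$ and $\|\mu_{T-1}^s\|^2/\rho_T^s$, which in turn are controlled by $\|\bar\lambda^s\|, \|\bar\mu^s\|$ and $E_{T-1} \leq (E_0 + (T-1)\delta_0)/\gamma^{T-1}$, yielding $\hat\epsilon_2$.

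The main obstacle is the cancellation step in the second paragraph: without pairing the dual-update identity $(\lambda_{k+1}^s)^\top(A_s x_{k+1}^{\NCal_s} - b_s) = (\|\lambda_{k+1}^s\|^2 - \|\lambda_k^s\|^2 + \|\lambda_{k+1}^s - \lambda_k^s\|^2)/(2\beta_{k+1}^s)$ with Lemma~\ref{Lemma:dual-AL} in exactly the right way, the $\|\lambda_{k+1}^s - \lambda_k^s\|^2/(2\beta_{k+1}^s)$ terms do not cancel and one is left with a noncontractive remainder that prevents the recursion $E_{k+1} \leq E_k/\gamma + \delta_{k+1}$ and hence the targeted $1/\gamma^T$ decay. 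Controlling the slack in $g_2(\bar x) \leq \sum_s \|\lambda_k^s\|^2/(2\beta_{k+1}^s)$ also requires care, because it is an inequality rather than an equality whenever the active set of $\bar x$ is nontrivial, and one must check that the resulting surplus is absorbed into the $-g_2(\bar x)$ contribution with the correct sign.
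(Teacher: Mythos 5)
Your proposal is correct in substance and follows the same architecture as the paper's proof: invoke Proposition~\ref{Prop:VNE} to get a variational solution $(\bar{x},\bar{\lambda},\bar{\mu})$, test the $\delta_{k+1}$-condition at $x=\bar{x}$, use the dual-update identities plus Lemma~\ref{Lemma:dual-AL} to convert the penalty terms into telescoping multiplier distances, and then translate the resulting multiplier bound into feasibility and QVI estimates. Your closed forms $g_2(x_{k+1})=\sum_s\|\lambda_{k+1}^s\|^2/(2\beta_{k+1}^s)$ and $h_2(x_{k+1})=\sum_s\|\mu_{k+1}^s\|^2/(2\rho_{k+1}^s)$ are exact (the update makes $\max\{0,A_sx_{k+1}^{\NCal_s}-b_s+\lambda_k^s/\beta_{k+1}^s\}=\lambda_{k+1}^s/\beta_{k+1}^s$ componentwise) and give a cleaner packaging of the paper's coordinate-by-coordinate expansion over the active set $I_k^s$; the sign check on $g_2(\bar{x})\leq\sum_s\|\lambda_k^s\|^2/(2\beta_{k+1}^s)$ works as you say, provided $\lambda_k^s\geq 0$, which holds for $k\geq 1$ by the max-update and for $k=0$ by the nonnegative initialization. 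Your contracting potential $E_{k+1}\leq E_k/\gamma+\delta_{k+1}$ is just a reparametrization of the paper's Eq.~\eqref{bound:main-AL-multiplier}: the paper multiplies the per-iteration inequality by $\gamma^{k+1}$ so the distances carry fixed denominators $\beta_0^s,\rho_0^s$ and telescope additively, which is identical to your bound $E_T\leq(E_0+\delta_0T)/\gamma^T$ after dividing by $\gamma^T$. The one genuinely different step is feasibility: the paper extracts the residual norms \emph{linearly}, with coefficient $\gamma^T$, by testing the master inequality at the perturbed multipliers $\lambda^s=\bar{\lambda}^s+\max\{0,A_sx_T^{\NCal_s}-b_s\}/\|\max\{0,A_sx_T^{\NCal_s}-b_s\}\|$ (and analogously for $\mu^s$), whereas you read the residuals off the dual update ($\max\{0,A_sx_T^{\NCal_s}-b_s\}\leq\lambda_T^s/\beta_T^s$, $E_sx_T^{\NCal_s}-d_s=(\mu_T^s-\mu_{T-1}^s)/\rho_T^s$) and control $\|\lambda_T^s\|$ via the potential. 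Both are valid; note though that your route yields a bound of order $\gamma^{-T}\sqrt{E_0+\delta_0T}$ plus $\gamma^{-T}$ terms in $\|\bar{\lambda}^s\|,\|\bar{\mu}^s\|$, which has the same $\gamma^{-T}$ decay needed for Theorem~\ref{Thm:AMPAL} but is not literally the stated $\hat{\epsilon}_1$, and similarly your $(a+b)^2\leq 2a^2+2b^2$ treatment of the QVI part produces slightly different constants than $\hat{\epsilon}_2$ (the paper's own constants here are also loose, so this is cosmetic rather than a gap).
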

\begin{proof}
Since $x_{k+1} = \textsc{amp}((\beta_{k+1}^s, \rho_{k+1}^s)_{s \in [S]}, (\lambda_k^s, \mu_k^s)_{s \in [S]}, \delta_{k+1}, x_k)$, we have
\begin{eqnarray}\label{inequality:main-AL-first}
\lefteqn{\delta_{k+1} \geq g_2(x_{k+1}) + h_2(x_{k+1}) - g_2(x) - h_2(x) + (x_{k+1} - x)^\top v(x)} \\
& = & \left(\sum_{s=1}^S \left(\tfrac{\rho_{k+1}^s}{2}\|E_s x_{k+1}^{\NCal_s}-d_s+\tfrac{\mu_k^s}{\rho_{k+1}^s}\|^2 - \tfrac{\rho_{k+1}^s}{2}\|E_s x^{\NCal_s}-d_s+\tfrac{\mu_k^s}{\rho_{k+1}^s}\|^2\right)\right) \nonumber \\
& & + \left(\sum_{s=1}^S \tfrac{\beta_{k+1}^s}{2}\|\max\{0, A_s x_{k+1}^{\NCal_s}-b_s+\tfrac{\lambda_k^s}{\beta_{k+1}^s}\}\|^2\right) \nonumber \\ 
& & - \left(\sum_{s=1}^S \tfrac{\beta_{k+1}^s}{2}\|\max\{0, A_s x^{\NCal_s}-b_s+\tfrac{\lambda_k^s}{\beta_{k+1}^s}\}\|^2\right) \nonumber \\ 
& & + (x_{k+1} - x)^\top v(x), \textnormal{ for all } x \in \hat{X}. \nonumber  
\end{eqnarray}
Using the update of $\mu_k^s$ in Algorithm~\ref{alg:AMPAL} together with the second equality in Lemma~\ref{Lemma:dual-AL}, we have
\begin{eqnarray}\label{inequality:main-AL-second}
\lefteqn{\tfrac{\rho_{k+1}^s}{2}\|E_s x_{k+1}^{\NCal_s}-d_s+\tfrac{\mu_k^s}{\rho_{k+1}^s}\|^2} \\
& = & (\mu_k^s)^\top(E_s x_{k+1}^{\NCal_s}-d_s) + \tfrac{\rho_{k+1}^s}{2}\|E_s x_{k+1}^{\NCal_s}-d_s\|^2 + \tfrac{\|\mu_k^s\|^2}{2\rho_{k+1}^s} \nonumber \\ 
& = & (\mu^s)^\top(E_s x_{k+1}^{\NCal_s}-d_s) + (\mu_{k+1}^s - \mu^s)^\top(E_s x_{k+1}^{\NCal_s}-d_s) - \tfrac{1}{2\rho_{k+1}^s} \|\mu_{k+1}^s - \mu_k^s\|^2 + \tfrac{\|\mu_k^s\|^2}{2\rho_{k+1}^s} \nonumber \\ 
& = & (\mu^s)^\top(E_s x_{k+1}^{\NCal_s}-d_s) + \tfrac{1}{2\rho_{k+1}^s}(\|\mu_{k+1}^s - \mu^s\|^2 - \|\mu_k^s - \mu^s\|^2 + \|\mu_k^s\|^2). \nonumber
\end{eqnarray}
Recall that $I_k^s$ denotes the set of coordinates of $\lambda_k^s + \beta_{k+1}^s(A_s x_{k+1}^{\NCal_s} - b_s)$ whose values are nonnegative. Then, by the update of $\lambda_k^s$ in Algorithm~\ref{alg:AMPAL}, we have
\begin{eqnarray*}
\lefteqn{\tfrac{\beta_{k+1}^s}{2}\|\max\{0, A_s x_{k+1}^{\NCal_s}-b_s+\tfrac{\lambda_k^s}{\beta_{k+1}^s}\}\|^2 = \sum_{j \in I_k^s} \tfrac{\beta_{k+1}^s}{2}\|(A_s x_{k+1}^{\NCal_s}-b_s+\tfrac{\lambda_k^s}{\beta_{k+1}^s})_j\|^2} \\
& = & \sum_{j \in I_k^s} \left((\lambda_k^s)_j(A_s x_{k+1}^{\NCal_s}-b_s)_j + \tfrac{\beta_{k+1}^s}{2}((A_s x_{k+1}^{\NCal_s}-b_s)_j)^2 + \tfrac{((\lambda_k^s)_j)^2}{2\beta_{k+1}^s}\right) \\
& = & \sum_{j \in I_k^s} \left((\lambda_{k+1}^s)_j(A_s x_{k+1}^{\NCal_s}-b_s)_j - \tfrac{\beta_{k+1}^s}{2}((A_s x_{k+1}^{\NCal_s}-b_s)_j)^2 + \tfrac{((\lambda_k^s)_j)^2}{2\beta_{k+1}^s}\right) \\
& = & (\lambda_{k+1}^s)^\top(A_s x_{k+1}^{\NCal_s}-b_s) - \sum_{j \in I_k^s} \tfrac{\beta_{k+1}^s}{2}((A_s x_{k+1}^{\NCal_s}-b_s)_j)^2 + \sum_{j \in I_k^s} \tfrac{((\lambda_k^s)_j)^2}{2\beta_{k+1}^s}, 
\end{eqnarray*}
and 
\begin{equation*}
\tfrac{1}{2\beta_{k+1}^s}\|\lambda_{k+1}^s - \lambda_k^s\|^2 = \sum_{j \in I_k^s} \tfrac{\beta_{k+1}^s}{2}((A_s x_{k+1}^{\NCal_s}-b_s)_j)^2 + \sum_{j \in [m^s] \setminus I_k^s} \tfrac{((\lambda_k^s)_j)^2}{2\beta_{k+1}^s}. 
\end{equation*}
Putting these two equations together yields that
\begin{equation}\label{inequality:main-AL-third}\small
\tfrac{\beta_{k+1}^s}{2}\|\max\{0, A_s x_{k+1}^{\NCal_s}-b_s+\tfrac{\lambda_k^s}{\beta_{k+1}^s}\}\|^2 =-\tfrac{1}{2\beta_{k+1}^s}\|\lambda_{k+1}^s - \lambda_k^s\|^2 + (\lambda_{k+1}^s)^\top(A_s x_{k+1}^{\NCal_s}-b_s) + \tfrac{\|\lambda_k^s\|^2}{2\beta_{k+1}^s}. 
\end{equation}
Combining Eq.~\eqref{inequality:main-AL-third} with the first inequality in Lemma~\ref{Lemma:dual-AL}, we have
\begin{equation}\label{inequality:main-AL-fourth}\small
\tfrac{\beta_{k+1}^s}{2}\|\max\{0, A_s x_{k+1}^{\NCal_s}-b_s+\tfrac{\lambda_k^s}{\beta_{k+1}^s}\}\|^2 \geq (\lambda^s)^\top(A_s x_{k+1}^{\NCal_s}-b_s) + \tfrac{\|\lambda_k^s\|^2}{2\beta_{k+1}^s} + \tfrac{\|\lambda_{k+1}^s - \lambda^s\|^2 - \|\lambda_k^s - \lambda^s\|^2}{2\beta_{k+1}^s}. 
\end{equation}
Plugging Eq.~\eqref{inequality:main-AL-second} and Eq.~\eqref{inequality:main-AL-fourth} into Eq.~\eqref{inequality:main-AL-first} yields that, for any $(\lambda^s, \mu^s) \in \br_+^{m_s} \times \br^{e_s}$, we have
\begin{eqnarray*}
\lefteqn{\delta_{k+1} \geq \left(\sum_{s=1}^S \tfrac{\|\mu_{k+1}^s - \mu^s\|^2 - \|\mu_k^s - \mu^s\|^2}{2\rho_{k+1}^s} + \tfrac{\|\lambda_{k+1}^s - \lambda^s\|^2 - \|\lambda_k^s - \lambda^s\|^2}{2\beta_{k+1}^s} + \tfrac{\|\mu_k^s\|^2}{2\rho_{k+1}^s} + \tfrac{\|\lambda_k^s\|^2}{2\beta_{k+1}^s}\right)} \\
& & + \left(\sum_{s=1}^S (\mu^s)^\top(E_s x_{k+1}^{\NCal_s}-d_s) + (\lambda^s)^\top(A_s x_{k+1}^{\NCal_s}-b_s)\right) \\
& & - \left(\sum_{s=1}^S \tfrac{\rho_{k+1}^s}{2}\|E_s x^{\NCal_s}-d_s + \tfrac{\mu_k^s}{\rho_{k+1}^s}\|^2 + \tfrac{\beta_{k+1}^s}{2}\|\max\{0, A_s x^{\NCal_s}-b_s+\tfrac{\lambda_k^s}{\beta_{k+1}^s}\}\|^2\right) \\
& & + (x_{k+1} - x)^\top v(x), \textnormal{ for all } x \in \hat{X}.
\end{eqnarray*}
Rearranging this inequality with the update of $\delta_k$, $\rho_k^s$ and $\beta_k^s$, we have
\begin{eqnarray}\label{inequality:main-AL-key}
\lefteqn{\delta_0 \geq \left(\sum_{s=1}^S \tfrac{\|\mu_{k+1}^s - \mu^s\|^2 - \|\mu_k^s - \mu^s\|^2}{2\rho_0^s} + \tfrac{\|\lambda_{k+1}^s - \lambda^s\|^2 - \|\lambda_k^s - \lambda^s\|^2}{2\beta_0^s} + \tfrac{\|\mu_k^s\|^2}{2\rho_0^s} + \tfrac{\|\lambda_k^s\|^2}{2\beta_0^s}\right)} \\
& & + \gamma^{k+1}\left(\sum_{s=1}^S (\mu^s)^\top(E_s x_{k+1}^{\NCal_s}-d_s) + (\lambda^s)^\top(A_s x_{k+1}^{\NCal_s}-b_s)\right) \nonumber \\
& & - \left(\sum_{s=1}^S \tfrac{\rho_0^s\gamma^{2k+2}}{2}\|E_s x^{\NCal_s}-d_s + \tfrac{\mu_k^s}{\rho_{k+1}^s}\|^2 + \tfrac{\beta_0^s\gamma^{2k+2}}{2}\|\max\{0, A_s x^{\NCal_s}-b_s+\tfrac{\lambda_k^s}{\beta_{k+1}^s}\}\|^2\right) \nonumber \\
& & + \gamma^{k+1} (x_{k+1} - x)^\top v(x). \nonumber
\end{eqnarray}
By Proposition~\ref{Prop:VNE}, there exists a triple $(\bar{x} \in \hat{X}, \bar{\lambda} \geq 0, \bar{\mu}) $ such that  
\begin{equation}\label{condition:main-AL-first}
A_s \bar{x}^{\NCal_s} \leq b_s, \ E_s \bar{x}^{\NCal_s} = d_s, \ \langle \bar{\lambda}^s, A_s\bar{x}^{\NCal_s} - b_s\rangle = 0, \ \textnormal{for all } s \in [S], 
\end{equation}
and the following VI holds true for all $\nu \in \NCal$, 
\begin{equation}\label{condition:main-AL-second}
(\bar{x}^\nu - x^\nu)^\top\left(v_\nu(\bar{x}) + \sum_{s \in \ICal_\nu} ((A_s^\nu)^\top\bar{\lambda}^s + (E_s^\nu)^\top\bar{\mu}^s)\right) \leq 0, \quad \textnormal{for all } x^\nu \in \hat{X}_\nu. 
\end{equation}
Note that Eq.~\eqref{condition:main-AL-first} implies that 
\begin{equation}\label{condition:main-AL-third}
\begin{array}{rcl}
\tfrac{\rho_0^s\gamma^{2k+2}}{2}\|E_s\bar{x}^{\NCal_s}-d_s + \tfrac{ \mu_k^s}{\rho_0^s\gamma^{k+1}}\|^2 - \tfrac{\|\mu_k^s\|^2}{2\rho_0^s} & = & 0, \\
\tfrac{\beta_0^s\gamma^{2k+2}}{2}\|\max\{0, A_s\bar{x}^{\NCal_s}-b_s+\tfrac{\lambda_k^s}{\beta_0^s\gamma^{k+1}}\}\|^2 - \tfrac{\|\lambda_k^s\|^2}{2\beta_0^s} & \leq & 0. 
\end{array}
\end{equation}
\paragraph{Bounding Lagrangian multipliers.} Plugging Eq.~\eqref{condition:main-AL-third} into Eq.~\eqref{inequality:main-AL-key} with $x = \bar{x}$, we have
\begin{eqnarray}\label{inequality:main-AL-fifth}
\lefteqn{\delta_0 \geq \gamma^{k+1}(x_{k+1} - \bar{x})^\top v(\bar{x}) + \gamma^{k+1}\left(\sum_{s=1}^S (\mu^s)^\top(E_s x_{k+1}^{\NCal_s}-d_s) + (\lambda^s)^\top(A_s x_{k+1}^{\NCal_s}-b_s)\right)} \nonumber \\
& & + \left(\sum_{s=1}^S \tfrac{\|\mu_{k+1}^s - \mu^s\|^2 - \|\mu_k^s - \mu^s\|^2}{2\rho_0^s} + \tfrac{\|\lambda_{k+1}^s - \lambda^s\|^2 - \|\lambda_k^s - \lambda^s\|^2}{2\beta_0^s} + \tfrac{\|\mu_k^s\|^2}{2\rho_0^s} + \tfrac{\|\lambda_k^s\|^2}{2\beta_0^s}\right). 
\end{eqnarray}
Summing Eq.~\eqref{inequality:main-AL-fifth} over $k = 0, 1, \ldots, t-1$ and rearranging the resulting inequality with $\lambda^s = \bar{\lambda}^s$ and $\mu^s = \bar{\mu}^s$ for all $s \in [S]$ and $\widehat{x}_t = \tfrac{\sum_{k=1}^t \gamma^k x_k}{\sum_{k=1}^t \gamma^k}$, we have
\begin{eqnarray*}
\lefteqn{\tfrac{\delta_0 t}{\sum_{k=1}^t \gamma^k} \geq \left(\sum_{s=1}^S (\bar{\mu}^s)^\top(E_s\widehat{x}_t^{\NCal_s}-d_s) + (\bar{\lambda}^s)^\top(A_s\widehat{x}_t^{\NCal_s}-b_s)\right)} \\
& & + (\widehat{x}_t - \bar{x})^\top v(\bar{x}) + \tfrac{1}{\sum_{k=1}^t \gamma^k}\left(\sum_{s=1}^S \tfrac{\|\mu_t^s - \bar{\mu}^s\|^2 - \|\mu_0^s - \bar{\mu}^s\|^2}{2\rho_0^s} + \tfrac{\|\lambda_t^s - \bar{\lambda}^s\|^2 - \|\lambda_0^s - \bar{\lambda}^s\|^2}{2\beta_0^s} \right). \nonumber
\end{eqnarray*}
Applying a similar argument as in Lemma~\ref{Lemma:main-QP} together with Eq.~\eqref{condition:main-AL-second} and $\widehat{x}_t \in \hat{X}$, we have
\begin{equation*}
(\bar{x} - \widehat{x}_t)^\top v(\bar{x}) + \sum_{s=1}^S ((A_s\bar{x}^{\NCal_s} - A_s\widehat{x}_t^{\NCal_s})^\top \bar{\lambda}^s + (E_s\bar{x}^{\NCal_s} - E_s\widehat{x}_t^{\NCal_s})^\top \bar{\mu}^s) \leq 0.  
\end{equation*}
Summing the above two inequalities and using the fact that $E_s \bar{x}^{\NCal_s} = d_s$ and $(\bar{\lambda}^s)^\top(A_s\bar{x}^{\NCal_s} - b_s) = 0$ for all $s \in [S]$, we have 
\begin{equation*}
\delta_0 t \geq \sum_{s=1}^S \left(\tfrac{\|\mu_t^s - \bar{\mu}^s\|^2 - \|\mu_0^s - \bar{\mu}^s\|^2}{2\rho_0^s} + \tfrac{\|\lambda_t^s - \bar{\lambda}^s\|^2 - \|\lambda_0^s - \bar{\lambda}^s\|^2}{2\beta_0^s}\right).
\end{equation*}
Changing the index $t$ back to $k$ for the simplicity, we conclude that 
\begin{equation}\label{bound:main-AL-multiplier}
\sum_{s=1}^S \tfrac{\|\mu_k^s - \bar{\mu}^s\|^2}{2\rho_0^s} + \tfrac{\|\lambda_k^s - \bar{\lambda}^s\|^2}{2\beta_0^s} \leq \delta_0 k + \left(\sum_{s=1}^S \tfrac{\|\mu_0^s - \bar{\mu}^s\|^2}{2\rho_0^s} + \tfrac{\|\lambda_0^s - \bar{\lambda}^s\|^2}{2\beta_0^s}\right).  
\end{equation}
\paragraph{Proof of Eq.~\eqref{opt:eps-AL-feas}.} Considering Eq.~\eqref{inequality:main-AL-fifth} with $k = T-1$ and 
\begin{equation*}
\lambda^s = \bar{\lambda}^s + \tfrac{\max\{0, A_s x_T^{\NCal_s}-b_s\}}{\|\max\{0, A_s x_T^{\NCal_s}-b_s\}\|}, \quad \mu^s = \bar{\mu}^s + \tfrac{E_s x_T^{\NCal_s}-d_s}{\|E_s x_T^{\NCal_s}-d_s\|}, \quad \textnormal{for all } s \in [S]. 
\end{equation*}
Then, we have
\begin{eqnarray*}
\delta_0 & \geq & \gamma^T (x_T - \bar{x})^\top v(\bar{x}) - \left(\sum_{s=1}^S \tfrac{\left\|\mu_{T-1}^s - \bar{\mu}^s - \tfrac{E_s x_T^{\NCal_s}-d_s}{\|E_s x_T^{\NCal_s}-d_s\|}\right\|^2}{2\rho_0^s} + \tfrac{\left\|\lambda_{T-1}^s - \bar{\lambda}^s - \tfrac{\max\{0, A_s x_T^{\NCal_s}-b_s\}}{\|\max\{0, A_s x_T^{\NCal_s}-b_s\}\|}\right\|^2}{2\beta_0^s}\right) \\
& & + \gamma^T \left(\sum_{s=1}^S (\bar{\mu}^s)^\top(E_s x_T^{\NCal_s}-d_s) + (\bar{\lambda}^s)^\top(A_s x_T^{\NCal_s}-b_s)\right) \\
& & + \gamma^T \left(\sum_{s=1}^S \|E_s x_T^{\NCal_s}-d_s\| + \|\max\{0, A_s x_T^{\NCal_s}-b_s\}\|\right) \\
& \geq & \gamma^T (x_T - \bar{x})^\top v(\bar{x}) - \left(\sum_{s=1}^S \tfrac{\|\mu_{T-1}^s - \bar{\mu}^s\|^2 + 1}{\rho_0^s} + \tfrac{\|\lambda_{T-1}^s - \bar{\lambda}^s\|^2 + 1}{\beta_0^s}\right) \\
& & + \gamma^T \left(\sum_{s=1}^S (\bar{\mu}^s)^\top(E_s x_T^{\NCal_s}-d_s) + (\bar{\lambda}^s)^\top(A_s x_T^{\NCal_s}-b_s)\right) \\
& & + \gamma^T \left(\sum_{s=1}^S \|E_s x_T^{\NCal_s}-d_s\| + \|\max\{0, A_s x_T^{\NCal_s}-b_s\}\|\right). 
\end{eqnarray*}
Applying a similar argument as in Lemma~\ref{Lemma:main-QP} together with Eq.~\eqref{condition:main-AL-second} and $x_T \in \hat{X}$, we have
\begin{equation*}
(\bar{x} - x_T)^\top v(\bar{x}) + \sum_{s=1}^S ((A_s\bar{x}^{\NCal_s} - A_s x_T^{\NCal_s})^\top \bar{\lambda}^s + (E_s\bar{x}^{\NCal_s} - E_s x_T^{\NCal_s})^\top \bar{\mu}^s) \leq 0.  
\end{equation*}
Combining the above two inequalities and using that $E_s \bar{x}^{\NCal_s} = d_s$ and $(\bar{\lambda}^s)^\top(A_s\bar{x}^{\NCal_s} - b_s) = 0$ for all $s \in [S]$ and $\gamma^T > 0$, we have 
\begin{equation*}
\delta_0 \geq \gamma^T \left(\sum_{s=1}^S \|E_s x_T^{\NCal_s}-d_s\| + \|\max\{0, A_s x_T^{\NCal_s}-b_s\}\|\right) - \left(\sum_{s=1}^S \tfrac{\|\mu_{T-1}^s - \bar{\mu}^s\|^2 + 1}{\rho_0^s} + \tfrac{\|\lambda_{T-1}^s - \bar{\lambda}^s\|^2 + 1}{\beta_0^s}\right). 
\end{equation*}
This together with Eq.~\eqref{bound:main-AL-multiplier} yields that 
\begin{equation*}
\sum_{s=1}^S \|E_s x_T^{\NCal_s}-d_s\| + \|\max\{0, A_s x_T^{\NCal_s}-b_s\}\| \leq \tfrac{1}{\gamma^T}\left(\delta_0 T + \left(\sum_{s=1}^S \tfrac{\|\mu_0^s - \bar{\mu}^s\|^2}{2\rho_0^s} + \tfrac{\|\lambda_0^s - \bar{\lambda}^s\|^2}{2\beta_0^s}\right)\right). 
\end{equation*}
Then, Eq.~\eqref{opt:eps-AL-feas} follows from the definition of $\hat{\epsilon}_1$. 

\paragraph{Proof of Eq.~\eqref{opt:eps-AL-feas-VI}.} Considering Eq.~\eqref{inequality:main-AL-key} with $k = T-1$ and $\lambda^s = 0$ and $\mu^s = 0$ for all $s \in [S]$. Then, we have
\begin{eqnarray*}
\lefteqn{\delta_0 \geq \gamma^T(x_T - x)^\top v(x) - \left(\sum_{s=1}^S \tfrac{\|\mu_{T-1}^s\|^2}{2\rho_0^s} + \tfrac{\|\lambda_{T-1}^s\|^2}{2\beta_0^s}\right) + \left(\sum_{s=1}^S \tfrac{\|\mu_{T-1}^s\|^2}{2\rho_0^s} + \tfrac{\|\lambda_{T-1}^s\|^2}{2\beta_0^s}\right)} \\
& & - \left(\sum_{s=1}^S \tfrac{\rho_0^s\gamma^{2T}}{2}\|E_s x^{\NCal_s}-d_s + \tfrac{ \mu_{T-1}^s}{\rho_0^s\gamma^T}\|^2 + \tfrac{\beta_0^s\gamma^{2T}}{2}\|\max\{0, A_s x^{\NCal_s}-b_s+\tfrac{\lambda_{T-1}^s}{\beta_0^s\gamma^T}\}\|^2\right). 
\end{eqnarray*}
Rearranging this inequality and using Eq.~\eqref{bound:main-AL-multiplier} yields that 
\begin{eqnarray*}
\lefteqn{(x_T - x)^\top v(x) \leq \tfrac{1}{\gamma^T}\left(\delta_0 T + \left(\sum_{s=1}^S \tfrac{\|\mu_0^s - \bar{\mu}^s\|^2}{2\rho_0^s} + \tfrac{\|\lambda_0^s - \bar{\lambda}^s\|^2}{2\beta_0^s}\right)\right) - \left(\sum_{s=1}^S \tfrac{\|\mu_{T-1}^s\|^2}{2\rho_0^s\gamma^T} + \tfrac{\|\lambda_{T-1}^s\|^2}{2\beta_0^s\gamma^T}\right)} \\
& & + \left(\sum_{s=1}^S \tfrac{\rho_0^s\gamma^T}{2}\|E_s x^{\NCal_s}-d_s + \tfrac{\mu_{T-1}^s}{\rho_0^s\gamma^T}\|^2 + \tfrac{\beta_0^s\gamma^T}{2}\|\max\{0, A_s x^{\NCal_s}-b_s+\tfrac{\lambda_{T-1}^s}{\beta_0^s\gamma^T}\}\|^2\right). 
\end{eqnarray*}
Fixing $\nu \in \NCal$, we plug $x^{-\nu} = x_T^{-\nu}$ into this inequality and obtain that 
\begin{eqnarray*}
\lefteqn{(x_T^\nu - x^\nu)^\top v_\nu(x^\nu, x_T^{-\nu}) \leq \tfrac{1}{\gamma^T}\left(\delta_0 T + \left(\sum_{s=1}^S \tfrac{\|\mu_0^s - \bar{\mu}^s\|^2}{2\rho_0^s} + \tfrac{\|\lambda_0^s - \bar{\lambda}^s\|^2}{2\beta_0^s}\right)\right)} \\
& & + \underbrace{\sum_{s \in \ICal_\nu} \tfrac{\beta_0^s \gamma^T}{2}\|\max\{0, A_s^\nu x^\nu + \sum_{j \in \NCal_s, j \neq \nu} A_s^j x_T^j - b_s + \tfrac{\lambda_{T-1}^s}{\beta_0^s\gamma^T}\}\|^2 - \tfrac{\|\lambda_{T-1}^s\|^2}{2\beta_0^s\gamma^T}}_{\textbf{I}} \\ 
& & + \underbrace{\sum_{s \in \ICal_\nu} \tfrac{\rho_0^s\gamma^T}{2}\|E_s^\nu x^\nu + \sum_{j \in \NCal_s, j \neq \nu} E_s^j x_T^j - d_s + \tfrac{\mu_{T-1}^s}{\rho_0^s\gamma^T} \|^2 - \tfrac{\|\mu_{T-1}^s\|^2}{2\rho_0^s\gamma^T}}_{\textbf{II}}, \quad \textnormal{for all } x^\nu \in \hat{X}_\nu. 
\end{eqnarray*}
Suppose that $x^\nu \in \hat{X}_\nu$ satisfies  
\begin{equation*}
\|\max\{0, A_s^\nu x^\nu + \sum_{j \in \NCal_s, j \neq \nu} A_s^j x_T^j - b_s\}\| \leq \epsilon, \quad \|E_s^\nu x^\nu + \sum_{j \in \NCal_s, j \neq \nu} E_s^j x_T^j - d_s\| \leq \epsilon, \quad \textnormal{for all } s \in \ICal_\nu. 
\end{equation*}
Applying the above conditions to $\textbf{I}$ and $\textbf{II}$ and using Young's inequality, we have
\begin{eqnarray*}
\textbf{I} & \leq & \sum_{s \in \ICal_\nu} \tfrac{\beta_0^s \gamma^T}{2}\|\max\{0, A_s^\nu x^\nu + \sum_{j \in \NCal_s, j \neq \nu} A_s^j x_T^j - b_s\}\|^2 \\
& & + \sum_{s \in \ICal_\nu} \|\lambda_{T-1}^s\|\max\{0, A_s^\nu x^\nu + \sum_{j \in \NCal_s, j \neq \nu} A_s^j x_T^j - b_s\}\| \\
& \leq & \tfrac{\epsilon^2(1+\gamma^T)}{2} \left(\sum_{s \in \ICal_\nu} \beta_0^s\right) + \tfrac{\epsilon^2}{2}\left(\sum_{s \in \ICal_\nu} \tfrac{\|\lambda_{T-1}^s\|^2}{\beta_0^s}\right) \\
& \leq & \tfrac{\epsilon^2(1+\gamma^T)}{2} \left(\sum_{s \in \ICal_\nu} \beta_0^s\right) + \epsilon^2\left(\sum_{s \in \ICal_\nu} \tfrac{\|\lambda_{T-1}^s - \bar{\lambda}^s\|^2 + \|\bar{\lambda}^s\|^2}{\beta_0^s} \right), 
\end{eqnarray*}
and 
\begin{eqnarray*}
\textbf{II} & \leq & \sum_{s \in \ICal_\nu} \tfrac{\rho_0^s \gamma^T}{2}\|E_s^\nu x^\nu + \sum_{j \in \NCal_s, j \neq \nu} E_s^j x_T^j - d_s\|^2 + \|\mu_{T-1}^s\|E_s^\nu x^\nu + \sum_{j \in \NCal_s, j \neq \nu} E_s^j x_T^j - d_s\| \\
& \leq & \tfrac{\epsilon^2(1+\gamma^T)}{2} \left(\sum_{s \in \ICal_\nu} \rho_0^s\right) + \tfrac{\epsilon^2}{2}\left(\sum_{s \in \ICal_\nu} \tfrac{\|\mu_{T-1}^s\|^2}{\rho_0^s}\right) \\
& \leq & \tfrac{\epsilon^2(1+\gamma^T)}{2} \left(\sum_{s \in \ICal_\nu} \rho_0^s\right) + \epsilon^2\left(\sum_{s \in \ICal_\nu} \tfrac{\|\mu_{T-1}^s - \bar{\mu}^s\|^2 + \|\bar{\mu}^s\|^2}{\rho_0^s}\right). 
\end{eqnarray*}
Putting these pieces together with $\ICal_\nu \subseteq [S]$ and Eq.~\eqref{bound:main-AL-multiplier} yields that 
\begin{eqnarray*}
\lefteqn{(x_T^\nu - x^\nu)^\top(v_\nu(x^\nu, x_T^{-\nu})) \leq \tfrac{1}{\gamma^T}\left(\delta_0 T + \left(\sum_{s=1}^S \tfrac{\|\mu_0^s - \bar{\mu}^s\|^2}{2\rho_0^s} + \tfrac{\|\lambda_0^s - \bar{\lambda}^s\|^2}{2\beta_0^s}\right)\right)} \\
& & + \tfrac{\epsilon^2(1+\gamma^T)}{2} \left(\sum_{s = 1}^S \beta_0^s + \rho_0^s\right) + \epsilon^2\left(2\delta_0(T-1) + \left(\sum_{s=1}^S \tfrac{\|\mu_0^s - \bar{\mu}^s\|^2 + \|\bar{\mu}^s\|^2}{\rho_0^s} + \tfrac{\|\lambda_0^s - \bar{\lambda}^s\|^2 + \|\bar{\lambda}^s\|^2}{\beta_0^s}\right)\right), \\
& & \textnormal{for all } x^\nu \in \hat{X}_\nu. 
\end{eqnarray*}
Then, Eq.~\eqref{opt:eps-AL-feas-VI} follows from the definition of $\hat{\epsilon}_2$. 
\end{proof}
\subsection{Proof of Theorem~\ref{Thm:AMPAL}}
Fixing a sufficiently small $\epsilon \in (0, 1)$, we have $x_T \in \hat{X}$ is an $\epsilon$-solution of an NGNEP if
\begin{equation*}
\|\max\{0, A_s x_T^{\NCal_s} - b_s\}\| \leq \epsilon, \quad \|E_s x_T^{\NCal_s} - d_s\| \leq \epsilon, \quad \textnormal{for all } s \in [S], 
\end{equation*}
and for all $\nu \in \NCal$, we have 
\begin{equation*}
(x_T^\nu - x^\nu)^\top v_\nu(x^\nu, \bar{x}^{-\nu}) \leq C\epsilon, 
\end{equation*}
for all $x^\nu \in \hat{X}_\nu$ satisfying that 
\begin{equation*}
\|\max\{0, A_s^\nu x^\nu + \sum_{j \in \NCal_s, j \neq \nu} A_s^j x_T^j - b_s\}\| \leq \epsilon, \quad \|E_s^\nu x^\nu + \sum_{j \in \NCal_s, j \neq \nu} E_s^j x_T^j - d_s\| \leq \epsilon, \quad \textnormal{for all } s \in \ICal_\nu, 
\end{equation*}
By Lemma~\ref{Lemma:main-AL}, it suffices to guarantee that $T > 0$ satisfies the following conditions:   
\begin{equation}\label{inequality:AMPAL-main}
\begin{array}{rcl}
\epsilon & \geq & \tfrac{1}{\gamma^T}\left(\delta_0 T + \left(\sum_{s=1}^S \tfrac{\|\mu_0^s - \bar{\mu}^s\|^2}{2\rho_0^s} + \tfrac{\|\lambda_0^s - \bar{\lambda}^s\|^2}{2\beta_0^s}\right)\right), \\
C\epsilon & \geq & \epsilon^2\left(2\delta_0(T-1) + \left(\sum_{s=1}^S \tfrac{\|\mu_0^s - \bar{\mu}^s\|^2 + \|\bar{\mu}^s\|^2}{\rho_0^s} + \tfrac{\|\lambda_0^s - \bar{\lambda}^s\|^2 + \|\bar{\lambda}^s\|^2}{\beta_0^s}\right)\right) \\
& & + \hat{\epsilon}_1 + \tfrac{\epsilon^2(1+\gamma^T)}{2} \left(\sum_{s = 1}^S \beta_0^s + \rho_0^s\right). 
\end{array}
\end{equation}
Suppose that we set $T > 0$ as
\begin{equation*}
T = 1 + \left\lfloor \tfrac{\log\log\left(\tfrac{2+2\delta_0}{\epsilon}\right) + \log\left(\tfrac{2+2\delta_0}{\epsilon}\right) + \log\left(\sum_{s=1}^S \tfrac{\|\mu_0^s - \bar{\mu}^s\|^2}{\rho_0^s} + \tfrac{\|\lambda_0^s - \bar{\lambda}^s\|^2}{\beta_0^s}\right)}{\log(\gamma)}\right\rfloor.   
\end{equation*}
Then, Eq.~\eqref{inequality:AMPAL-main} holds true with a positive constant $C > 0$ given by 
\begin{eqnarray*}
\lefteqn{C = 1 + \left\lceil \left(\tfrac{1}{2} + (1 + \delta_0)\left(\sum_{s=1}^S \tfrac{\|\mu_0^s - \bar{\mu}^s\|^2}{\rho_0^s} + \tfrac{\|\lambda_0^s - \bar{\lambda}^s\|^2}{\beta_0^s}\right)\log\left(\tfrac{2+2\delta_0}{\epsilon}\right)\right)\left(\sum_{s = 1}^S \beta_0^s + \rho_0^s\right)\right\rceil} \\
& & + \left\lceil \tfrac{2\delta_0\left(\log\log\left(\tfrac{2+2\delta_0}{\epsilon}\right) + \log\left(\tfrac{2+2\delta_0}{\epsilon}\right) + \log\left(\sum_{s=1}^S \tfrac{\|\mu_0^s - \bar{\mu}^s\|^2}{\rho_0^s} + \tfrac{\|\lambda_0^s - \bar{\lambda}^s\|^2}{\beta_0^s}\right)\right)}{\log(\gamma)}\right. \\ 
& & \left. + \left(\sum_{s=1}^S \tfrac{\|\mu_0^s - \bar{\mu}^s\|^2 + \|\bar{\mu}^s\|^2}{\rho_0^s} + \tfrac{\|\lambda_0^s - \bar{\lambda}^s\|^2 + \|\bar{\lambda}^s\|^2}{\beta_0^s}\right)\right\rceil. 
\end{eqnarray*}
Since $\beta_k^s = \gamma^k \beta_0^s$ and $\rho_k^s = \gamma^k \rho_0^s$ for all $s \in [S]$ and $\delta_k = \tfrac{\delta_0}{\gamma^k}$, Lemma~\ref{Lemma:subroutine-AL} guarantees that the required number of gradient evaluations at the $k^\textnormal{th}$ iteration is bounded by
\begin{equation*}
N_k = \left\{
\begin{array}{ll}
O\left(\gamma^k\left(\sqrt{\tfrac{\sum_{s=1}^S (\beta_0^s\|A_s\|^2 + \rho_0^s\|E_s\|^2) D^2}{\delta_0}} + \tfrac{\sqrt{N}\ell_\theta D^2}{\delta_0}\right)\right), & \textnormal{if } \alpha = 0, \\
O\left(\left(\gamma^{\tfrac{k}{2}}\sqrt{\tfrac{\sum_{s=1}^S (\beta_0^s\|A_s\|^2 + \rho_0^s\|E_s\|^2)}{\alpha}} + \tfrac{\sqrt{N}\ell_\theta}{\alpha}\right)\log\left(\tfrac{\gamma^k(\sqrt{N}\ell_\theta+\ell_G)D^2}{\delta_0}\right)\right), & \textnormal{if } \alpha > 0. \\
\end{array}
\right. 
\end{equation*}
Therefore, we conclude that the total number of gradient evaluations required to return an $\epsilon$-solution of an NGNEP is
\begin{equation*}
N_{\textnormal{grad}} = \sum_{k=1}^{T-1} N_k = \left\{
\begin{array}{ll}
O\left(\epsilon^{-1}\log(1/\epsilon)\right), & \textnormal{if } \mu=0, \\
O\left(\epsilon^{-1/2}\log(1/\epsilon)\right), & \textnormal{if } \mu>0. \\
\end{array}
\right. 
\end{equation*}
This completes the proof.

\end{document}